\documentclass[12pt,english,pdflatex,twoside]{article}
\usepackage[T1]{fontenc}
\usepackage[latin9]{inputenc}
\usepackage[a4paper,left=27mm,right=27mm,top=34mm,bottom=34mm]{geometry}
\usepackage{textcomp}
\usepackage{amsthm}
\usepackage{amsmath}
\usepackage{amssymb}
\usepackage{esint}
\usepackage{rotating}

\makeatletter

\numberwithin{equation}{section}
\numberwithin{figure}{section}
\theoremstyle{plain}
\newtheorem{thm}{Theorem}[section]
\theoremstyle{remark}
\newtheorem*{rem*}{Remark}
\theoremstyle{plain}
\newtheorem{assumption}[thm]{Assumption}
\theoremstyle{definition}
\newtheorem{defn}[thm]{Definition}
\theoremstyle{plain}
\newtheorem{lem}[thm]{Lemma}

\makeatother

\usepackage{babel}

\begin{document}
\global\long\def\eps{\varepsilon}
\global\long\def\Rr{\mathbb{R}}
\global\long\def\R{\mathbb{R}}
\global\long\def\bbR{\mathbb{R}}
\global\long\def\bbN{\mathbb{N}}
\global\long\def\calT{\mathcal{T}}
\global\long\def\nablaomega{\nabla_{\omega}}
\global\long\def\diveromega{\nabla_{\omega}\cdot}
\global\long\def\symM{\bbR_{s}^{d\times d}}
\global\long\def\diver{\nabla\cdot}
\global\long\def\diverx{\mbox{div}_{x}}
\global\long\def\v{v}
\global\long\def\norm#1{\left\Vert #1\right\Vert }
\global\long\def\Q{Q}
\global\long\def\Y{Y}
\global\long\def\B{B}
\global\long\def\cT{\mathcal{T}}
\global\long\def\cL{\mathcal{L}}
\global\long\def\cH{\mathcal{H}}
\global\long\def\cV{\mathcal{V}}
\global\long\def\Rdd{\bbR^{d\times d}}
\global\long\def\Rnn{\bbR^{n\times n}}
\global\long\def\cF{\mathcal{F}}
\global\long\def\cP{\mathcal{P}}
\global\long\def\cS{\mathcal{S}}
\global\long\def\cB{\mathcal{B}}
\global\long\def\cN{\mathcal{N}}
\global\long\def\cC{\mathcal{C}}
\global\long\def\Nn{\mathbb{N}}
\global\long\def\Zz{\mathbb{Z}}
\global\long\def\n{\boldsymbol{n}}
\global\long\def\dist{\mbox{dist}}
\global\long\def\diam{\mathrm{diam}\,}
\global\long\def\del{\partial}
\global\long\def\dom{\mathrm{dom}\,}
\global\long\def\weakto{\rightharpoonup}
\global\long\def\gkj{\Gamma_{k,j}}
\global\long\def\scp#1#2{\left\langle #1,#2\right\rangle }
\global\long\def\Id{\mathrm{Id}}
\global\long\def\grid{\mathbb{T}}
\global\long\def\Rd{\mathbb{R}^{d}}
\global\long\def\Rn{\mathbb{R}^{n}}
\global\long\def\clos{\mathrm{cl}}
\global\long\def\gridS{\mathbb{S}}
\global\long\def\argmin{\mathrm{argmin}}
\global\long\def\normal{\mathfrak{n}}

\global\long\def\lb{[[}
\global\long\def\rb{]]}
\global\long\def\LOM{L^{2}(\Omega;\Rdd)}
\global\long\def\LOMns{L_{n,s}^{2}(\Omega)}
\global\long\def\LOMn{L_{n}^{2}(\Omega)}
\global\long\def\LOMs{L_{s}^{2}(\Omega)}
\global\long\def\U{U}
\global\long\def\linsymM{\mathcal{L}(\symM,\symM)}
\global\long\def\bbT{\mathbb{T}}
\global\long\def\bbX{\mathbb{X}}
\global\long\def\Ups{\Upsilon}

\title{\bf\Large Stochastic homogenization of plasticity equations}

\pagestyle{myheadings} 
\thispagestyle{plain} 

\markboth{M.\,Heida and B.\,Schweizer}{Stochastic homogenization of
  plasticity equations}

\author{Martin Heida\thanks{Weierstrass Institute, Mohrenstrasse 39,
    D-10117 Berlin, Germany.}\quad and Ben Schweizer\thanks{TU Dortmund,
    Fakult\"at f\"ur Mathematik, Vogelpothsweg 87, D-44227 Dortmund,
    Germany.}}

\date{April 8, 2016}

\maketitle

\vspace*{1mm}
\begin{center}
  \begin{minipage}{14cm}
    {\bf Abstract:} In the context of infinitesimal strain plasticity
    with hardening, we derive a stochastic homogenization result. We
    assume that the coefficients of the equation are random functions:
    elasticity tensor, hardening parameter and flow-rule function are
    given through a dynamical system on a probability space.  A
    parameter $\eps>0$ denotes the typical length scale of
    oscillations. We derive effective equations that describe the
    behavior of solutions in the limit $\eps\to 0$. The homogenization
    limit is based on the needle-problem approach: We verify that the
    stochastic coefficients ``allow averaging'': In average, a strain
    evolution $[0,T]\ni t\mapsto \xi(t) \in \symM$ induces a stress
    evolution $[0,T]\ni t\mapsto \Sigma(\xi)(t) \in \symM$.  With the
    abstract result of \cite{heidaschweizer2014} we obtain the
    stochastic homogenization limit.
  \end{minipage}  
\end{center}
\vspace*{2mm}

\section{Introduction}

In its history, mathematics has often been inspired by questions from
continuum mechanics: Given a body of metal and given a force acting on
it, what is the deformation that the body of metal is experiencing?
Euler has been inspired by this question; much later, the development
of linear and non-linear elasticity theory provided excellent models
(and mathematical theories) for non-permanent deformations. In
contrast, the description of permanent deformations with plasticity
models is much less developed. The only well-established plasticity
models are based on infinitesimal strain theories, ad-hoc
decomposition rules of the strain tensor and flow rules for the
plastic deformation tensor.

Homogenization theory is, in its origins, concerned with the following
question: How does a heterogeneous material (composed of different
materials) behave effectively? Can we characterize an effective
material such that a heterogeneous medium (consisting of a very fine
mixture) behaves like the effective material? This homogenization
question has a positive answer in the context of linear elasticity:
effective coefficients can be computed and bounds for these effective
coefficients are available. The situation is quite different for
plasticity models: Results have been obtained only in the last ten
years.  The effective model cannot be reduced to one macroscopic set
of differential equations. The effective system either remains a
two-scale model or, as we do here, must be formulated with a
hysteretic stress-strain map.
 
With only two exceptions, so far, homogenization results in plasticity
treat essentially the same system: Infinitesimal strains and an
additive decomposition of the strain tensor are used, some hardening
effect is included, and the homogenization is performed in a periodic
setting.  The two exceptions are \cite {FrancfortGiacomini-Homogen}
and \cite {Schweizer09}: In \cite {FrancfortGiacomini-Homogen}, no
hardening effect is used and the limit system is much more
involved. In \cite {Schweizer09}, stochastic coefficients are
permitted, but at the expence of a one-dimensional setting.  The
present article is based on \cite{heidaschweizer2014} and provides the
third exception: We treat a model with stochastic coefficients in
dimensions $2$ and $3$.

We mention at this point the more abstract approach in the framework
of energetic solutions, see \cite {MielkeRS08,MielkeTimofte07}, and
its application in gradient plasticity in \cite {MR2826468}.

\subsubsection*{Plasticity equations}

We study a bounded domain $\Q\subset\Rd$, $d\in \{2,3\}$, occupied by
a heterogeneous material, and its evolution in a time interval $(0,T)
\subset \R$. For a parameter $\eps>0$, we consider on $\Q\times(0,T)$
the plasticity system
\begin{equation}
  \begin{aligned}-\diver\sigma^{\eps} & =f\,, 
    & \sigma^{\eps} & =C_{\eps}^{-1}e^{\eps}\,,\\
    \nabla^{s}u^{\eps} & =e^{\eps}+p^{\eps}\,, & \qquad
    \partial_{t}p^{\eps} & \in\partial\Psi_{\eps}(\sigma^{\eps}-B_{\eps}p^{\eps})\,.
  \end{aligned}
  \label{eq:eps-problem}
\end{equation}
The first relation is the quasi-static balance of forces in the body,
$f$ is a given load, $\sigma$ the stress tensor. The second relation
is Hooke's law which relates linearly the stress $\sigma$ with the
elastic strain $e$. The third relation is the additive decomposition
of the infinitesimal strain $\nabla^{s} u = (\nabla u + (\nabla
u)^T)/2$. The fourth relation is the flow rule for the plastic strain
$p$, it uses the subdifferential $\partial\Psi$ of a convex function
$\Psi$. Kinematic hardening is introduced with the positive tensor
$B_{\eps}$. Hardening is an experimental fact in metals. From the
analytical point of view, hardening simplifies the mathematical
treatment considerably: Standard function spaces can be used, while in
the case without hardening (perfect plasticity) the space $BD(Q)$ of
bounded deformations must be used (measure-valued shear bands can
occur).  We refer to \cite {Alber_book98, HanReddy99} for the
modelling.

Our interest here is to study coefficients $B = B_\eps$ (hardening),
$C = C_\eps$ (elasticity tensor), and $\Psi = \Psi_\eps$ (convex flow
rule function) that depend on the parameter $\eps>0$. We imagine
$\eps$ to be the spatial length scale of the heterogeneities. Since
the coefficients depend on $\eps$, also the solution $(u,\sigma,e,p) =
(u^\eps,\sigma^\eps,e^\eps,p^\eps)$ depends on $\eps$.

We consider only positive and symmetric coefficient tensors, using the
following setting: We denote by $\symM\subset \Rdd$ the space of
symmetric matrices, $\linsymM$ is the space of linear mappings on
$\symM$. For every $\eps>0$ and almost every $x\in \Q$, the tensors
$C_{\eps}(x), B_{\eps}(x) \in \linsymM$ are assumed to be symmetric
with respect to the scalar product on $\symM$. Furthermore, for
constants $\gamma,\beta>0$, we assume the positivity and boundedness
\begin{equation}
  \gamma\left|\xi\right|^{2}\leq\xi:(C_{\eps}(x)\,\xi)
  \leq\frac{1}{\gamma}\left|\xi\right|^{2}\,,
  \qquad\beta\left|\xi\right|^{2}\leq\xi:(B_{\eps}(x)\,\xi)
  \leq\frac{1}{\beta}\left|\xi\right|^{2}\,
  \label{eq:reg-c-b-eps-1}
\end{equation}
for every $\xi\in\symM$, a.e. $x\in\Q$, and every $\eps>0$.

System \eqref {eq:eps-problem} is accompanied by a Dirichlet boundary
condition $u^{\eps}=\U$ on $\partial\Q\times(0,T)$ and an initial
condition for the plastic strain tensor (for simplicity, we assume
here a vanishing initial plastic deformation). Finally, the load $f$
must be imposed. We consider data
\begin{equation} 
  \U\in H^{1}(0,T;H^{1}(\Q;\Rd))\,,\quad 
  f\in H^{1}(0,T;L^{2}(\Q, \Rd))\,,\quad
  p^{\eps}|_{t=0} \equiv 0\,.
  \label{eq:reg-BC-initial}
\end{equation}
The fundamental task of homogenization theory is the following: If
$u^\eps\weakto u$ converges in some topology as $\eps\to 0$, what is
the equation that characterizes $u$?

\subsubsection*{Known homogenization results and the needle-problem
  approach}

The periodic homogenization of system \eqref {eq:eps-problem} was
performed in the last 10 years.  The effective two-scale limit system
was first stated in \cite {Alber_homogenization00}. The rigorous
derivation of the limit system (under different assumptions on the
coefficients) was obtained by Visintin with two-scale convergence
methods \cite{Visintin2005elastplast, Visintin_Kelvin06,
  Visintin_Maxwell09}, by Alber and Nesenenko with phase-shift
convergence \cite{AlberNesenenko09, Nesenenko07}, and by Veneroni
together with the second author with energy methods \cite
{SchweizerVen10}. By the same authors, some progress was achieved
regarding the monotone flow rule and a simplification of proofs in
\cite {SchweizerVeneroni-twoscale}. We refer to these publications
also for a further discussion of the periodic homogenization of system
\eqref {eq:eps-problem}.

The non-periodic homogenization of system \eqref {eq:eps-problem} is
much less treated. In particular, we are not aware of any stochastic
homogenization result (with the exception of \cite {Schweizer09}, but
the analysis of the one-dimensional case is much simpler, since the
stress variable can be obtained by a simple integration from the force
$f$).

For the non-periodic case, a partial homogenization result has been
obtained in \cite{heidaschweizer2014}.  That contribution is based on
the needle-problem approach, which has its origin in \cite
{Schweizer2009needle}. The present article is based on
\cite{heidaschweizer2014} and we therefore describe in the next
paragraph the needle-problem approach in more detail.

In the {\em needle-problem approach,} homogenization is seen as a
two-step procedure. We describe the two steps here with the scalar
model $-\nabla\cdot (a^\eps \nabla u^\eps) = f$ for a deformation
$u^\eps:Q\to \R$.  Step 1 is concerned with cell-problems: One
verifies that, on a representative elementary volume (REV, the unit
square in periodic homogenization) and for a vanishing load, the
material behaves in a well-defined way: An input (here: the averaged
gradient $\xi$ of the solution across the REV) results in a certain
output (here: the averaged stress $\sigma(\xi) = a^* \xi$ for a matrix
$a^*$). Step 2 is concerned with arbitrary domains $Q$ and arbitrary
loads $f$. The conclusion of Step 2 (which can be justified with the
needle-problem approach) is the following: If the REV-analysis
provides the material law $\xi\mapsto \sigma(\xi)$, then the behavior
of the material on the macroscopic scale is characterized by
$-\nabla\cdot (\sigma(\nabla u)) = f$ in $Q$ (in our example by
$-\nabla\cdot (a^* \nabla u) = f$). In \cite {Schweizer2009needle},
these methods are developed and the two-step scheme is illustrated
with the linear model: The assumption of an averaging property on
simplices implies the homogenization on the macroscopic scale with the
corresponding law.

In \cite{heidaschweizer2014}, we performed Step 2 of the
needle-problem approach in the context of plasticity. Our assumption
was that the material parameters allow averaging: solutions on
simplices with affine boundary data $x\mapsto \xi\cdot x$ and
vanishing forces $f\equiv 0$ have convergent stress averages: in the
limit $\eps\to 0$, stress integrals converge to some deterministic
quantity $\Sigma(\xi)$. Due to memory effects in plasticity problems,
one has to find for every evolution of strains $\xi = \xi(t)$ an
evolution of stresses $\Sigma(\xi)(t) = \Sigma(\xi(.))(t)$. In
\cite{heidaschweizer2014}, we derived from this averaging assumption a
homogenization result: For general domains $Q$, general boundary data
$\U$ and general forces $f$, the effective problem for every limit $u
= \lim_{\eps\to 0} u^\eps$ reads
\begin{equation}
  -\diver\Sigma(\nabla^{s}u)=f\quad \text{ in } \Q\times (0,T)\,.
  \label{eq:limit-problem-pre}
\end{equation}

Let us briefly describe the relation between the needle-problem
approach (used here) with classical stochastic homogenization results
(as in \cite{JikovKozlovOleinik94, Kozlov1979,
  papanicolaou1979boundary}: We believe that our result on the
stochastic homogenization of plasticity equations could also be
obtained along the classical route. In such a proof one would first
obtain a two-scale effective problem in the variables
$(x,t,\omega)$. In a second step, one can realize that the dependence
on $x$ can be disintegrated: The two-scale system can be written in
the form \eqref {eq:limit-problem-pre}, if the hysteretic stress
operator $\Sigma$ is defined through a stochastic cell problem in the
variables $(t,\omega)$. In the needle-problem approach, we keep these
two aspects separated: The abstract result ``averaging property for
$\Sigma$ implies homogenization'' of \cite{heidaschweizer2014} is
independent of the stochastic description. The stochastic analysis
concerns only the operator $\Sigma$ and its properties (the work at
hand).

\subsubsection*{The stochastic homogenization result}
\label{sub:The-main-Theorem}

In this contribution, we perform the stochastic homogenization of the
plasticity system. In particular, we demonstrate that the averaging
assumption is satisfied for an evolution operator $\Sigma$ and that
equation \eqref {eq:limit-problem-pre} is the effective plasticity
problem. Comparing with other homogenization results for plasticity
equations, this means that we obtain a {\em disintegrated} effective
system: Equation \eqref {eq:limit-problem-pre} is local in space, it
is not a {\em two-scale} system. The microscopic behavior is
synthesized in the operator $\Sigma$. The only non-local effect occurs
in the time variable, since $\Sigma$ is an evolution operator.

\begin{defn}[The structure of the limit problem]
  \label{def:sto-limit-problem} Let the domain $\Q\subset\Rd$ and the
  time horizon $T>0$ be as above, let $\Omega$ be a probability space
  with ergodic dynamical system as in Section
  \ref{sub:Setting-in-stochastic}, let the stochastic coefficients
  $C$, $B$ and $\Psi$ be as in Assumption \ref{ass:sto-coeffs}. 

  {\bf (i) Definition of the hysteretic strain-to-stress map $\Sigma :
    \xi \mapsto \sigma$.} We consider an input $\xi : [0,T] \to \symM$
  and solve the following stochastic cell problem with a triplet
  $(p,z,\v)$, where $p \in H^{1}(0,T;L^{2}(\Omega;\symM))$, $z\in
  H^{1}(0,T;L^{2}_{sol}(\Omega;\Rdd))$, $\v \in
  H^{1}(0,T;L^{2}_{pot}(\Omega;\Rdd))$, and $z$ is symmetric, $z =
  z^s$:
  \begin{equation}
    \begin{aligned}
      \xi & =Cz-v^s+p & & \mbox{a.e. in }[0,T]\times\Omega\,,\\ \partial_{t}p &
      \in\partial\Psi(z-Bp) & & \mbox{a.e. in }[0,T]\times\Omega\,.
    \end{aligned}
    \label{eq:hom-problem-1}
  \end{equation}
  For the definition of the function spaces $L_{pot}^{2}(\Omega)$ and
  $L_{sol}^{2}(\Omega)$ see \eqref{def:L2pot} and \eqref{def:L2sol}.
  The solution $(p,z,\v)$ defines the operator $\Sigma$,
  \begin{equation}
    \Sigma(\xi)(t) := \int_{\Omega}z(t,\omega)\,   d\cP(\omega)\,.
    \label{eq:hom-problem-2}
  \end{equation}

  {\bf (ii) Definition of the effective equation.}  For boundary data
  $\U$ and loading $f$ as in \eqref {eq:reg-BC-initial}, we search for
  $u\in H^{1}(0,T;H^{1}(\Q))$ such that
  \begin{equation}
    \int_{0}^{T}\int_{\Q} \Sigma(\nabla^{s}u) :\nabla\varphi
    =\int_{0}^{T}\int_{\Q}f\cdot\varphi \quad \forall\varphi\in
    L^{2}(0,T;H_{0}^{1}(\Q))\,.
    \label{eq:hom-problem-3}
  \end{equation}
  Additionally, we demand that the boundary condition $u = \U$ on
  $\partial\Q \times (0,T)$ is satisfied in the sense of traces.
\end{defn}

\begin{rem*}
  The argument of the stress function $\Sigma$ is $\xi = \xi(t)$, in
  the limit problem \eqref {eq:hom-problem-3} the stress function is
  evaluated, for every $x\in Q$, with the argument $\xi(.) =
  \nabla_{x}^{s} u(.,x)$.  For a more detailed description of the
  limit problem \eqref {eq:hom-problem-3} see Definition \ref
  {def:limit-problem}.  The precise statement of the stochastic
  cell-problem \eqref {eq:hom-problem-1} and the corresponding
  definition of the operator $\Sigma$ in \eqref {eq:hom-problem-2} is
  given in Definition \ref {def:Sigma}.
\end{rem*}

Our stochastic homogenization result follows by applying the main
theorem of \cite{heidaschweizer2014}. Essentially, we only have to
verify that, if the coefficient functions of system
\eqref{eq:eps-problem} are given by an ergodic stochastic process,
then the coefficients ``allow averaging'': In the limit $\eps\to 0$,
averages of the stress (for a homogeneous plasticity system on a
simplex with affine boundary data $\xi$) are given by the operator
$\Sigma$.

We verify this statement in Sections \ref {sec.stoch-cell} and \ref
{sec.proof-main}. The consequence is the following homogenization
theorem, which is our main result.

\begin{thm}[Stochastic homogenization in plasticity]
  \label{thm:Main-Theorem} Let $\Q\subset\Rd$ be a bounded domain,
  $d\in \{2,3\}$, $T>0$. Let $\tau$ be an ergodic dynamical system on
  the probability space $(\Omega,\Sigma_\Omega,\cP)$ as in Section
  \ref{sub:Setting-in-stochastic}, let the stochastic coefficients
  $B$, $C$, $\Psi$ and the data $\U$ and $f$ be as in Assumption
  \ref{ass:sto-coeffs}.  Then, there exists a unique solution $u$ to
  the limit problem \eqref {eq:hom-problem-1}--\eqref
  {eq:hom-problem-3} of Definition \ref {def:sto-limit-problem}.  For
  $\omega\in\Omega$, let $(u^\eps,\sigma^\eps,e^\eps,p^\eps)$ be weak
  solutions to \eqref{eq:eps-problem}.  Then, for
  a.e. $\omega\in\Omega$, as $\eps\rightarrow0$,
  \begin{align*}
    &u^{\eps}\weakto u \quad\mbox{ weakly in }
    H^{1}(0,T;H^{1}(\Q))\mbox{ and }\\
    &\sigma^{\eps}\weakto \Sigma(\nabla^{s}u) \quad\mbox{ weakly in
    }H^{1}(0,T;L^{2}(\Q))\,.\end{align*}
\end{thm}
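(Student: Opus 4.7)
The plan is to reduce Theorem \ref{thm:Main-Theorem} to the abstract homogenization result of \cite{heidaschweizer2014} by establishing two ingredients: well-posedness of the stochastic cell problem (so that the hysteretic operator $\Sigma$ is a bona fide strain-to-stress map with the continuity, causality and rate-independence required by the abstract theorem), and the averaging property of the ergodic coefficients on simplices with affine boundary data. The first ingredient is essentially classical monotone operator theory in $L^{2}(\Omega)$; the second is the stochastic heart of the argument, and it is precisely the content assumed in \cite{heidaschweizer2014}.

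\textbf{Step 1: stochastic setup and cell problem.} First I would recall the ergodic-theoretic framework: the dynamical system $\tau$, Birkhoff's theorem, and the Helmholtz-type decomposition of $L^{2}(\Omega;\Rdd)$ into the invariant, potential $L^{2}_{pot}(\Omega)$ and solenoidal $L^{2}_{sol}(\Omega)$ subspaces. For an input $\xi\in H^{1}(0,T;\symM)$ I would solve \eqref{eq:hom-problem-1} by projecting onto $L^{2}_{sol}$ and $L^{2}_{pot}$ and eliminating $z$ and $\v$, obtaining a maximal monotone evolution inclusion for $p$ in $L^{2}(\Omega;\symM)$. Standard sweeping-process theory yields existence, uniqueness and Lipschitz dependence of $p$ (hence of $z$, $\v$, and of the resulting $\Sigma$) on $\xi$; rate-independence of $\Sigma$ is inherited from the positive $0$-homogeneity of $\partial\Psi$ in the time derivative.

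\textbf{Step 2: averaging on a simplex.} Fix a simplex $S\subset \Q$, an affine datum $x\mapsto\xi(t)\cdot x$ and a realization $\omega$, and consider \eqref{eq:eps-problem} on $S\times(0,T)$ with coefficients $C_{\eps}(x)=C(\tau_{x/\eps}\omega)$, $B_{\eps}(x)=B(\tau_{x/\eps}\omega)$, $\Psi_{\eps}(x,\cdot)=\Psi(\tau_{x/\eps}\omega,\cdot)$ and $f\equiv 0$. Testing the flow rule with $\sigma^{\eps}-B_{\eps}p^{\eps}$ and integrating in time gives uniform bounds on $(u^{\eps},e^{\eps},p^{\eps},\sigma^{\eps})$ in $H^{1}(0,T;L^{2}(S))$. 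I would then extract stochastic two-scale limits along a subsequence: $\nabla^{s}u^{\eps}$ two-scale converges to $\xi(t)+\v^{s}(t,\omega)$ with $\v\in L^{2}_{pot}(\Omega;\Rdd)$, while $\diver\sigma^{\eps}=0$ translates into the solenoidal property $z\in L^{2}_{sol}$; the linear identities $\sigma^{\eps}=C_{\eps}^{-1}e^{\eps}$ and $\nabla^{s}u^{\eps}=e^{\eps}+p^{\eps}$ pass to the limit term by term and produce the first line of \eqref{eq:hom-problem-1}.

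\textbf{Step 3: nonlinear flow rule and conclusion.} The decisive step is to identify the limit of $\partial_{t}p^{\eps}\in\partial\Psi_{\eps}(\sigma^{\eps}-B_{\eps}p^{\eps})$. I would use a Minty-type monotonicity argument: the plasticity energy identity allows one to compute $\lim_{\eps}\int\scp{\sigma^{\eps}-B_{\eps}p^{\eps}}{\partial_{t}p^{\eps}}$ in terms of quantities whose limits are already controlled, and combining this with lower semicontinuity of $\Psi$ under two-scale convergence and maximal monotonicity of $\partial\Psi$ yields $\partial_{t}p\in\partial\Psi(z-Bp)$ a.e.\ on $[0,T]\times\Omega$. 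Together with Step 1, the limit triplet $(p,z,\v)$ is unique, hence the full sequence converges; the stochastic Birkhoff theorem applied to $\sigma^{\eps}(\cdot,\omega)$ then identifies $|S|^{-1}\int_{S}\sigma^{\eps}$ with $\int_{\Omega}z\,d\cP=\Sigma(\xi)$ for $\cP$-a.e.\ $\omega$. This is exactly the averaging hypothesis of \cite{heidaschweizer2014}, and applying that theorem delivers the effective equation and the weak convergences asserted in Theorem \ref{thm:Main-Theorem}; uniqueness of $u$ for \eqref{eq:hom-problem-3} follows from monotonicity of $\Sigma$ inherited from Step 1. The principal obstacle is the passage to the limit in the nonlinear flow rule under stochastic two-scale convergence; every other step is a careful but essentially standard combination of ergodic theory and monotone operator methods.
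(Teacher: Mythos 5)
Your high-level strategy---reduce to the abstract needle-problem theorem of \cite{heidaschweizer2014} by establishing well-posedness of the stochastic cell problem and the averaging property---is exactly the paper's. Your Step~1 corresponds to Theorem~\ref{thm:E-Pi-Sig-well-def} (though the paper proves it by a Galerkin approximation combined with a Moreau--Yosida regularization rather than invoking sweeping-process theory; also note that the paper's $\Psi$ is merely convex, l.s.c.\ with $\Psi(0,\omega)=0$, so positive $0$-homogeneity of $\partial\Psi$, hence rate-independence, is \emph{not} assumed---only causality of $\Sigma$ is needed).

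The genuinely different part is how you propose to obtain the averaging property. You suggest a compactness route: pass to stochastic two-scale limits of $(u^\eps,\sigma^\eps,p^\eps)$ along a subsequence, then identify the limit of the nonlinear flow rule via a Minty-type monotonicity argument and lower semicontinuity of $\Psi$. This is the classical two-scale template. The paper (Theorem~\ref{thm:Sto-averaging}) instead uses a \emph{direct comparison argument}: it realizes the already-constructed cell solution $(p,z,\v)$ as oscillating fields $\tilde{p}^\eps,\tilde{z}^\eps,\tilde{\v}^\eps$ on the simplex, tests the difference of the two systems against $\partial_t(u^\eps-\xi\cdot x)$, and exploits monotonicity of $\partial\Psi_\eps$ \emph{at fixed $\eps$} (between two exact solutions), the div-curl Lemma~\ref{lem:Div-Curl-Lemma}, and the ergodic theorem to conclude $\|\sigma^\eps-\tilde{z}^\eps\|_{L^2}+\|p^\eps-\tilde{p}^\eps\|_{L^2}\to 0$. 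This yields strong $L^2$ closeness, avoids building a full stochastic two-scale framework for the nonlinear inclusion, and bypasses the delicate Minty limit step: monotonicity is applied pointwise in $\eps$, not between an $\eps$-solution and a weak limit. Your route is plausible but would require proving that the plasticity energy identity passes to the stochastic two-scale limit, a nontrivial technical effort the paper deliberately avoids.

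There is also a genuine gap: the abstract theorem \ref{thm:needle-thm} requires, beyond averaging, the \emph{admissibility} of $\Sigma$ in the sense of Definition~\ref{def:admissible-operator}---that weak limits of finite-element Galerkin solutions of the effective problem are again solutions. Your proposal addresses only uniqueness of $u$ (via monotonicity of $\Sigma$); it does not establish admissibility. The paper devotes Theorem~\ref{thm:existence-limit-stochastic} to this, by a weak-limit passage in the discretized effective equation together with lower semicontinuity and Fenchel's equality. Without this ingredient the invocation of \cite{heidaschweizer2014} is incomplete.
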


\begin{rem*}
  The weak solution concept for the $\eps$-problem
  \eqref{eq:eps-problem} is made precise in Definition
  \ref{def:Sol-eps-problem}. The unique existence of a solution
  $u^\eps$ for a.e.\,$\omega\in \Omega$ is guaranteed by Theorem
  \ref{thm:uniqueness-eps-problem}.
\end{rem*}

The proof of Theorem \ref {thm:Main-Theorem} is concluded in Section
\ref{ssec:Proof-of-Main-Theorem}.  A sketch of the proof is presented
at the end of Section \ref{ssec:The-Needle-Problem}.

\subsection{Setting in stochastic homogenization}
\label{sub:Setting-in-stochastic}

We follow the traditional setting in stochastic homogenization, first
outlined by Papanicolaou and Varadhan in
\cite{papanicolaou1979boundary} and by Kozlov in \cite{Kozlov1979},
later used by Jikov, Kozlov and Oleinik \cite{JikovKozlovOleinik94}.
Let $(\Omega,\Sigma_{\Omega},\cP)$ be a probability space where we
assume that the $\sigma$-algebra $\Sigma_{\Omega}$ is countably
generated. This implies that $L^2(\Omega)$ is separable.  Let
$(\tau_{x})_{x\in\Rd}$ be an ergodic dynamical system on
$(\Omega,\Sigma_{\Omega},\cP)$. We rely on the following definitions:
A family $(\tau_{x})_{x\in\Rd}$ of measurable bijective mappings
$\tau_{x}:\Omega\mapsto\Omega$ is called a dynamical system on
$(\Omega,\Sigma_{\Omega},\cP)$ if it satisfies
\begin{enumerate}
\item [(i)]$\tau_{x}\circ\tau_{y}=\tau_{x+y}$ , $\tau_{0}=id$ $\quad$ (group
  property)
\item [(ii)]$\cP(\tau_{-x}B)=\cP(B)\quad\forall x\in\Rd,\,\,
  B\in\Sigma_{\Omega}$ $\quad$(measure preservation)
\item [(iii)]$A:\,\,\Rd\times\Omega\rightarrow\Omega\qquad(x,\omega)
  \mapsto\tau_{x}\omega$ is measurable $\quad$(measurability property)
\end{enumerate}
We say that the system $(\tau_{x})_{x\in\Rd}$ is ergodic, if for every
measurable function $f:\Omega\rightarrow\Rr$ holds
\begin{equation}
  \begin{split}
    \left[f(\omega)\stackrel{}{=}f(\tau_{x}\omega)\,\,\forall
      x\in\Rd\,,\, a.e.\,\,\omega\in\Omega\right] &
    \Rightarrow
    \left[\exists c_0\in \R:\ f(\omega)=c_0\,\,\textnormal{for a.e.}\,\,
      \omega\in\Omega\right]\,.
  \end{split}
  \label{eq:def_ergodicity}
\end{equation}
Given $f\in L^2(\Omega)$ and $\omega\in\Omega$, we call $f_\omega:\,\Rn\to\R$,
$x\mapsto f(\tau_x \omega)$ the $\omega$-realization of $f$.
An important property of ergodic dynamical systems is the fact that
spatial averages can be related to expectations. For a quite general
version of the ergodic theorem, we refer to \cite{Zhikov2006}. 
The following simple version is sufficient for our purposes.
\begin{thm}[Ergodic theorem]
  \label{thm:ergodic-thm} Let $(\Omega,\Sigma_{\Omega},\cP)$ be a
  probability space with an ergodic dynamical system
  $(\tau_{x})_{x\in\Rd}$ on $\Omega$. Let $f\in L^{1}(\Omega)$ be a
  function and $\Q\subset\Rd$ be a bounded open set. Then, for
  $\cP$-almost every $\omega\in\Omega$,
  \begin{equation}
    \lim_{\eps\rightarrow0}\int_{\Q}f(\tau_{x/\eps}\omega)\,
    dx=\lim_{\eps\rightarrow0}\int_{\Q}f_\omega\left(\frac{x}{\eps}\right)\,
    dx=\left|\Q\right|\int_{\Omega}f(\omega)\, d\cP(\omega)\,.
    \label{eq:ergodic-thm}
  \end{equation} 
  Furthermore, for every $f\in L^{p}(\Omega)$, $1\leq p\leq\infty$,
  and a.e. $\omega\in\Omega$, the function $f_\omega(x) =
  f(\tau_{x}\omega)$ satisfies $f_\omega\in L_{loc}^{p}(\Rd)$.  For
  $p<\infty$ holds $f_\omega(\cdot / \eps) =
  f(\tau_{\cdot/\eps}\omega) \weakto\int_{\Omega}f\, d\cP$ weakly in
  $L_{loc}^{p}(\Rd)$ as $\eps\rightarrow0$.
\end{thm}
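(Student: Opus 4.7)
The plan is to reduce the statement to the multi-dimensional continuous-parameter ergodic theorem by rescaling. Substituting $y = x/\eps$ in the integral on the left-hand side of \eqref{eq:ergodic-thm} gives
$$\int_{\Q} f(\tau_{x/\eps}\omega)\,dx = \eps^d \int_{\Q/\eps} f(\tau_y\omega)\,dy = |\Q| \cdot \frac{1}{|\Q/\eps|}\int_{\Q/\eps} f(\tau_y\omega)\,dy,$$
with $\Q/\eps := \{x/\eps : x \in \Q\}$ of measure $\eps^{-d}|\Q|$. The claim thus reduces to showing that the spatial averages of $y \mapsto f(\tau_y \omega)$ over the isotropically expanding family $(\Q/\eps)_{\eps>0}$ converge to $\int_\Omega f\,d\cP$ for $\cP$-a.e.\,$\omega$. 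This is exactly the conclusion of the continuous-parameter ergodic theorem for measure-preserving $\Rd$-actions, a general form of which is the one cited from \cite{Zhikov2006}. The family $(\Q/\eps)_{\eps>0}$ is an admissible averaging sequence because $\Q$ is bounded and open: the sets are sandwiched between two concentric balls of comparable radius, so the usual regularity (or F{\o}lner-type) hypothesis holds.

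The subtle point is that the $\cP$-null exceptional set of ``bad'' $\omega$ must be made independent of $\Q$. First I would establish the result for axis-aligned cubes $\Q = (0,r)^d$ with $r \in (0,\infty) \cap \mathbb{Q}$ (a countable family), obtaining one full-measure set of good $\omega$ by intersecting countably many null sets. I would then extend to a general bounded open $\Q$ by a sandwich argument: approximate $\Q$ from inside and outside by finite unions of dyadic cubes of side $2^{-n}$, and control the boundary-strip error via the already-established statement applied to $|f|$. For the $L^1$ hypothesis one may first handle $f \in L^\infty(\Omega)$ and reduce via truncation.

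For the second assertion, the local integrability $f_\omega \in L^p_{loc}(\Rd)$ follows by applying the first part to $|f|^p \in L^1(\Omega)$ at $\eps = 1$ and invoking translation invariance under the group action $\tau$; for $p=\infty$ it is immediate from measure preservation. For the weak convergence with $p < \infty$, the bound
$$\int_A |f(\tau_{x/\eps}\omega)|^p\,dx = \eps^d \int_{A/\eps} |f(\tau_y\omega)|^p\,dy \;\longrightarrow\; |A|\,\|f\|_{L^p(\Omega)}^p$$
holds for every bounded open $A \subset \Rd$, yielding uniform $L^p$-bounds and hence weakly convergent subsequences. I identify the weak limit by testing against characteristic functions: the first part applied with $\Q = A$ gives $\int_A f(\tau_{x/\eps}\omega)\,dx \to |A|\int_\Omega f\,d\cP$, so the weak limit agrees with the constant $\int_\Omega f\,d\cP$ on all indicators of bounded open sets. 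Since finite linear combinations of such indicators are dense in the separable space $L^{p'}_{loc}(\Rd)$ with $p' = p/(p-1)$, the weak limit is uniquely determined, and subsequential convergence upgrades to convergence of the whole family.

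The principal obstacle is essentially measure-theoretic bookkeeping: producing a single full-measure set of $\omega$ on which the conclusions hold simultaneously for all $\Q$ in part one and for all test functions in part two. The countable-cube reduction combined with the dyadic sandwich handles this cleanly once the $\Rd$-action ergodic theorem is available, after which the remaining arguments are routine.
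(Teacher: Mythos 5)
The paper does not prove Theorem \ref{thm:ergodic-thm}; it is stated as a known result, with a pointer to \cite{Zhikov2006} for a general version and an implicit reliance on the classical references \cite{JikovKozlovOleinik94, Kozlov1979, papanicolaou1979boundary}. There is therefore no proof in the paper to compare against. Your sketch is the standard reduction: rescale $y = x/\eps$ to turn the $\eps\to 0$ limit into an average over the dilated family $\eps^{-1}\Q$, and invoke the Wiener (Birkhoff) ergodic theorem for measure-preserving $\Rd$-actions; that is exactly the argument behind the cited result, and the countable-cube plus dyadic-sandwich bookkeeping you describe is the standard way to obtain a $\Q$-independent exceptional null set. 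Two small points deserve care. First, your justification of $f_\omega \in L^p_{loc}$ ``at $\eps = 1$'' is a slight misnomer: the first part of the theorem says nothing at a fixed $\eps$; what you actually need is just Fubini plus measure preservation, namely $\int_\Omega \int_{B_R} |f(\tau_x\omega)|^p\,dx\,d\cP(\omega) = |B_R|\,\|f\|_{L^p(\Omega)}^p < \infty$, so the inner integral is finite for a.e.\,$\omega$. Second, your weak-convergence argument via ``uniform $L^p$-bounds hence weakly convergent subsequences'' is fine for $1 < p < \infty$ (reflexivity), but for $p = 1$ boundedness in $L^1(K)$ does not give weak compactness; there you should instead argue directly, e.g.\ truncate $f = f\chi_{|f|\le M} + f\chi_{|f|>M}$, handle the bounded piece by testing against simple functions, and control the tail uniformly in $\eps$ by applying the first part of the theorem to $|f|\chi_{|f|>M}\in L^1(\Omega)$. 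With those adjustments the sketch is correct and gives a self-contained route to the stated result.
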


For brevity of notation in calculations and proofs, we will often omit
the symbol $d\cP$ in $\Omega$-integrals. We assume that the
coefficients in \eqref{eq:eps-problem} have the form
\begin{equation}
  \label{eq:C-eps-from-C}
  C_{\eps}(x) = C(\tau_{\frac{x}{\eps}}\omega)\,,\qquad B_{\eps}(x) =
  B(\tau_{\frac{x}{\eps}}\omega)\,,\qquad \Psi_{\eps}(\sigma) =
  \Psi(\sigma;\tau_{\frac{x}{\eps}}\omega) 
\end{equation}
for some functions $B$, $C$, and $\Psi$, see Assumption
\ref{ass:sto-coeffs}.

Using the function spaces
\begin{align*} 
  L_{pot,loc}^{2}(\Rd) & :=  \left\{ u\in
    L_{loc}^{2}(\Rd;\Rdd)\,\,|\,\,\forall U\,\,\mbox{bounded domain,
    }\exists\varphi\in H^{1}(U;\Rd)\,:\, u=\nabla\varphi\right\},\\
  L_{sol,loc}^{2}(\Rd) & :=  \left\{ u\in
    L_{loc}^{2}(\Rd;\Rdd)\,\,|\,\,\int_{\Rd} u\cdot\nabla\varphi=0\,\,
    \forall\varphi\in C_{c}^{1}(\Rd)\right\},
\end{align*}
we follow Chapter 7 in \cite{JikovKozlovOleinik94} and define
\begin{align}
  L_{pot}^{2}(\Omega)&:=\left\{v\in L^2(\Omega;\Rdd)\,|\, x\mapsto v(\tau_x\omega) 
    \text{ in } L_{pot,loc}^{2}(\Rd)\text{ for a.e. }\omega\in\Omega\right\}\label{def:L2pot},\\
  \cV^2_{pot}(\Omega)&:=\left\{f\in L_{pot}^{2}(\Omega)\,|\,\int_\Omega f\,d\cP=0\right\},\\
  L_{sol}^{2}(\Omega)&:=\left\{v\in L^2(\Omega;\Rdd)\,|\, x\mapsto
    v(\tau_x\omega) \text{ in } L_{sol,loc}^{2}(\Rd)\text{ for
      a.e. }\omega\in\Omega\right\}\label{def:L2sol}\,.
\end{align}
The three spaces \eqref{def:L2pot}--\eqref{def:L2sol} are closed
subspaces of $L^2(\Omega;\Rdd)$. The latter spaces can be decomposed
in an orthogonal sum as $L^2(\Omega;\Rdd)=\cV^2_{pot}(\Omega)\oplus
L_{sol}^{2}(\Omega)$, see \cite{JikovKozlovOleinik94}.

\begin{rem*}
  The periodic homogenization setting is a special case of the
  stochastic setting, and we recover known results in the periodic
  case. The cell problem on the periodicity cell is encoded in
  \eqref{eq:hom-problem-1} with the help of the spaces
  $L_{pot}^{2}(\Omega)$ and $L_{sol}^{2}(\Omega)$ ($\v^s$ is a
  symmetrized gradient and $z$ has a vanishing divergence).
\end{rem*}

\subsection{Solution concepts and existence results}

To formulate a stochastic setting, we consider $C,B\in
L^{\infty}(\Omega; \linsymM)$, pointwise symmetric, such that for
$\gamma, \beta >0$ holds
\begin{equation}
  \gamma\left|\xi\right|^{2}\leq\xi:C(\omega)\xi\leq\frac{1}{\gamma}\left|\xi\right|^{2}\,,
  \qquad\beta\left|\xi\right|^{2}\leq\xi:B(\omega)\xi\leq\frac{1}{\beta}\left|\xi\right|^{2}\,,
  \label{eq:ass-data-1}
\end{equation} 
for every $\xi\in\Rd$ and a.e. $\omega\in\Omega$.  Let $\Psi\,:\,\symM
\times \Omega\to (-\infty,+\infty]$, $(\xi,\omega)\mapsto\Psi(\xi,\omega)$ 
be measurable in $\symM\times
\Omega$, lower semicontinuous and convex in $\symM$ for
a.e. $\omega\in\Omega$, and with $\Psi(0, \omega) = 0$ for
a.e.\,$\omega\in \Omega$. We furthermore assume that for
a.e.\,$\omega\in\Omega$ there is $c(\omega)>0$ such that the convex
dual (in the first variable) satisfies
\begin{equation}
  \left|\Psi^{\ast}(\sigma;\tau_{x}\omega)-\Psi^{\ast}(\sigma;\tau_{y}\omega)\right|\leq
  c(\omega)\,\left|x-y\right|\left|\sigma\right|\qquad\forall\sigma\in\symM\,,\ 
  x,y\in\Rd\,.\label{eq:psi-ast-sto}
\end{equation}

We note that the above assumption on $\Psi$ implies that no
discontinuities are allowed in the flow rule.

\begin{assumption}[Data]
  \label{ass:sto-coeffs} Let $C,B\in L^{\infty}(\Omega;\linsymM)$ and
  $\Psi\,:\,\symM\times\Omega\rightarrow(-\infty,+\infty]$ satisfy 
	\eqref{eq:ass-data-1}--\eqref{eq:psi-ast-sto}.
  We consider only parameters $\omega\in\Omega$ such that the
  $\omega$-realizations $C_{\omega}(x):=C(\tau_{x}\omega)$,
  $B_{\omega}(x):=B(\tau_{x}\omega)$ are measurable and such that
  \eqref {eq:reg-c-b-eps-1} and \eqref{eq:psi-ast-sto} hold. We
  furthermore assume that $\U$ and $f$ satisfy the regularity \eqref
  {eq:reg-BC-initial} and the compatibility conditions $\U|_{t=0} =
  0$, $f|_{t=0} = 0$.
\end{assumption}

Our aim is to study \eqref {eq:eps-problem} with the coefficients
defined in \eqref {eq:C-eps-from-C}.  By slight abuse of notation and
omitting the index $\omega$ whenever possible, we also write
$C_{\eps}(x) := C_{\eps,\omega}(x) := C(\tau_{\frac{x}{\eps}}\omega)$
and $B_{\eps}(x) := B_{\eps,\omega}(x) :=
B(\tau_{\frac{x}{\eps}}\omega)$ as in \eqref {eq:C-eps-from-C}. We
assume that they satisfy \eqref{eq:reg-c-b-eps-1} and that
$\Psi_{\eps}$ satisfies
\begin{equation}
  \left|\Psi_{\eps,\omega}^{\ast}(\sigma;x_{1})-\Psi_{\eps,\omega}^{\ast}(\sigma;x_{2})\right|\leq
  c(\eps,\omega)\left|x_{1}-x_{2}\right|\left|\sigma\right|\,.
  \label{eq:lipschitz-psi-1-eps}
\end{equation}
This condition is of a technical nature. It is used only in the proof
of the existence result of Theorem \ref{thm:uniqueness-eps-problem}.
We remark that the existence result remains valid also without
assumption \eqref{eq:lipschitz-psi-1-eps}, as can be shown with the
methods of Section \ref {sec.stoch-cell}. Since we do not want to
repeat the proof of Theorem \ref {thm:uniqueness-eps-problem} here, we
assume the above Lipschitz condition.

\begin{defn}[Weak formulation of the $\eps$-problem]
  \label{def:Sol-eps-problem} We say that $(u^{\eps}, \sigma^\eps,
  e^{\eps}, p^{\eps})$ is a weak solution to the $\eps$-problem
  \eqref{eq:eps-problem} on $\Q$ with boundary condition $\U$ if the
  following is satisfied: There holds $u^{\eps}=v^{\eps}+\U$ with \[
  v^{\eps}\in H^{1}(0,T;H_{0}^{1}(\Q))\,,\quad
  e^{\eps},p^{\eps},\sigma^{\eps}\in H^{1}(0,T;L^{2}(\Q;\symM))\,,\]
  equation $-\diver\sigma^{\eps} =f$ of \eqref{eq:eps-problem} holds
  in the distributional sense and the other relations of
  \eqref{eq:eps-problem} hold pointwise almost everywhere in $\Q\times
  (0,T)$.
\end{defn}

We note that, due to the regularity of $\sigma^{\eps}$, every weak
solution to \eqref{eq:eps-problem} satisfies
\begin{align}
  \int_{0}^{T}\int_{\Q}\sigma^{\eps}:\nabla^{s}\varphi &
  =\int_{0}^{T}\int_{\Q}f\cdot\varphi\qquad\forall\varphi\in
  L^{2}(0,T;H_{0}^{1}(\Q))\,.\label{eq:eps-weak-1}
\end{align} 
Theorem 1.2 of \cite{heidaschweizer2014} provides the following
existence result.

\begin{thm}[Existence of solutions to the $\eps$-problem]
  \label{thm:uniqueness-eps-problem} Let the coefficient functions
  $C$, $B$, $\Psi$, the parameter $\omega\in\Omega$, and the data $\U$
  and $f$ be as in Assumption \ref{ass:sto-coeffs}. Then, for every
  $\eps>0$, there exists a unique weak solution $(u^{\eps},
  \sigma^\eps, e^{\eps}, p^{\eps})$ to the $\eps$-problem
  \eqref{eq:eps-problem} in the sense of Definition
  \ref{def:Sol-eps-problem}. The solutions satisfy the a priori
  estimate
  \begin{equation}
   \norm{u^{\eps}}_{\cV^{1}_{1}}
    + \norm{e^{\eps}}_{\cV^{1}_{0}} 
    + \norm{p^{\eps}}_{\cV^{1}_{0}}
    + \norm{\sigma^{\eps}}_{\cV^{1}_{0}}\leq C\,,
    \label{eq:apriori-eps-prob}
  \end{equation} 
  in the spaces $\cV^{1}_{0} := H^1(0,T; L^{2}(\Q;\symM))$ and
  $\cV^{1}_{1} := H^1(0,T;H_{0}^{1}(\Q))$, the constant $C =
  C(\U,f,\beta,\gamma)$ depends on $\beta$ and $\gamma$ from
  \eqref{eq:reg-c-b-eps-1}, but it does not depend on $\eps>0$ or
  $\omega\in \Omega$.
\end{thm}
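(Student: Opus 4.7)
The plan is to construct solutions via a time-incremental minimization scheme that exploits the hidden convex-variational structure of \eqref{eq:eps-problem}, and then to obtain the a priori bounds from energy and monotonicity arguments relying only on the constants $\gamma,\beta$ from \eqref{eq:reg-c-b-eps-1}. Throughout I would first homogenize the Dirichlet data by writing $u^\eps = v^\eps + \U$ with $v^\eps \in H^1_0(\Q;\Rd)$ and reformulate \eqref{eq:eps-problem} as a system for $(v^\eps, e^\eps, p^\eps, \sigma^\eps)$.

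For existence, I partition $[0,T]$ into steps of length $h>0$ and, given the previous iterate $(u^\eps_{n-1}, p^\eps_{n-1})$, define $(u^\eps_n, e^\eps_n, p^\eps_n)$ as the unique minimizer over the affine constraint set $\{(u,e,p) : u - \U(t_n) \in H^1_0,\ \nabla^s u = e + p\}$ of
\begin{equation*}
  J_n(u,e,p) := \tfrac{1}{2}\int_\Q C_\eps^{-1} e:e + \tfrac{1}{2}\int_\Q B_\eps p:p + h\int_\Q \Psi_\eps\left(\tfrac{p - p^\eps_{n-1}}{h}; x\right) - \int_\Q f(t_n)\cdot u.
\end{equation*}
Unique solvability follows from strict convexity (positivity of both $C_\eps^{-1}$ and $B_\eps$) and lower semicontinuity; condition \eqref{eq:lipschitz-psi-1-eps} supplies the measurability in $x$ needed to make the $\Psi_\eps$-term a well-defined convex functional on $L^2(\Q;\symM)$. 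The Euler-Lagrange conditions reproduce the discrete analogues of the four relations of \eqref{eq:eps-problem}.

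A priori bounds come from testing $-\diver\sigma^\eps=f$ with $\partial_t u^\eps-\partial_t \U$ and using $\sigma^\eps:\partial_t p^\eps \geq B_\eps p^\eps:\partial_t p^\eps$, which holds by monotonicity of $\partial\Psi_\eps$ together with $0\in\partial\Psi_\eps(0)$ (a consequence of $\Psi_\eps\geq 0=\Psi_\eps(0)$). This yields
\begin{equation*}
  \tfrac{1}{2}\tfrac{d}{dt}\int_\Q \bigl( C_\eps^{-1} e^\eps:e^\eps + B_\eps p^\eps:p^\eps \bigr) \leq \int_\Q f\cdot\partial_t v^\eps + \int_\Q \sigma^\eps:\partial_t \nabla^s \U,
\end{equation*}
from which uniform $L^\infty(0,T;L^2)$-bounds on $e^\eps,p^\eps,\sigma^\eps$ follow via Young's inequality with constants depending only on $\gamma,\beta$. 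For the $H^1$-in-time regularity I would compare the equations at consecutive times, test with the time-difference quotient of the solution, and reuse monotonicity of $\partial\Psi_\eps$ to bound $\partial_t p^\eps$ in $L^2(0,T;L^2)$; the remaining variables inherit time regularity through the linear relations. Passing $h\to 0$ via weak compactness and lower semicontinuity then produces a weak solution satisfying \eqref{eq:apriori-eps-prob}.

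Uniqueness follows by subtracting two candidate solutions and running the same monotonicity test on their difference, closing with Gronwall. The main obstacle I anticipate is the $H^1$-in-time regularity: it requires the flow rule to be stable under time differentiation, which is precisely where the technical Lipschitz-in-$x$ condition \eqref{eq:lipschitz-psi-1-eps} becomes essential, as it permits translating time increments at fixed $\omega$ into bounded perturbations of $\Psi_\eps^*$ uniformly in $x$ and thereby controls the commutator between the nonlinearity and the time-difference quotient.
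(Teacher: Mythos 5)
The paper does not prove this theorem here; it cites Theorem~1.2 of \cite{heidaschweizer2014}, and the closest proof it does carry out (Theorem~\ref{thm:E-Pi-Sig-well-def}) uses a Galerkin approximation combined with Moreau--Yosida regularization rather than time-incremental minimization. Your incremental scheme is a legitimate alternative in spirit, but the functional $J_n$ as written contains a duality error that makes it produce the wrong flow rule. The flow rule in \eqref{eq:eps-problem} is $\partial_t p^\eps\in\partial\Psi_\eps(\sigma^\eps-B_\eps p^\eps)$, whose Biot form by Lemma~\ref{lemma:convprop}\,(iv) is $\sigma^\eps-B_\eps p^\eps\in\partial\Psi_\eps^*(\partial_t p^\eps)$; it is this \emph{dual} form that time-incremental minimization reproduces. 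The Euler--Lagrange condition of your $J_n$ in $p$ reads $\sigma-B_\eps p\in\partial\Psi_\eps\bigl((p-p^\eps_{n-1})/h\bigr)$, whose continuum limit is $\sigma-B_\eps p\in\partial\Psi_\eps(\partial_t p)$ --- the Fenchel dual of the desired inclusion, and a genuinely different equation unless $\Psi_\eps$ is self-conjugate. You must replace $\Psi_\eps$ by its Legendre--Fenchel conjugate $\Psi_\eps^*$ in the dissipation term of $J_n$; this is not cosmetic, and in fact it is only after this swap that the quantity controlled by \eqref{eq:psi-ast-sto}/\eqref{eq:lipschitz-psi-1-eps} actually appears in your minimization.

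Your account of where \eqref{eq:lipschitz-psi-1-eps} is used is also off the mark. That bound is a Lipschitz modulus in the \emph{spatial} variable at fixed argument $\sigma$, whereas the $H^1$-in-time estimate you sketch is a pure time-difference-quotient argument at frozen spatial coefficients, closed by monotonicity of $\partial\Psi_\eps$; no commutator with spatial translation occurs, so the Lipschitz-in-$x$ condition plays no role there. Consistent with this, the text right after \eqref{eq:lipschitz-psi-1-eps} states that the existence result remains valid without that assumption, by the methods of Section~\ref{sec.stoch-cell} --- which indeed make no use of it. The remaining ingredients of your proposal --- coercivity and strict convexity from \eqref{eq:reg-c-b-eps-1}, the energy estimate obtained by testing with $\partial_t v^\eps$ together with $0\in\partial\Psi_\eps(0)$, and uniqueness via monotonicity and Gronwall --- are standard and sound once the $\Psi_\eps^*$ correction is made.
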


\subsection{The needle problem approach to plasticity}
\label{ssec:The-Needle-Problem}

The main result of \cite{heidaschweizer2014} is a homogenization
theorem. Under the assumption that causal operators $\Sigma$ and $\Pi$
satisfy certain admissibility and averaging properties, we obtain the
convergence of the $\eps$-solutions $u^\eps$ to the solution $u$ of
the effective problem \eqref{eq:limit-problem-pre}. We next recall the
required properties. In the following, we use the space
$H^{1}_*(0,T;\symM) := H^{1}(0,T;\symM) \cap \{ \xi\ |\ \xi|_{t=0} =
0\}$ of evolutions with vanishing initial values.

\begin{defn}[Averaging]
  \label{def:Averaging} We say that a map
  $F:H^{1}_*(0,T;\symM)\rightarrow H^{1}(0,T;\symM)$ defines a {\em
    causal operator,} if, for almost every $t\in [0,T]$, the value
  $F(\xi,t):=F(\xi)(t)$ is independent of $\xi|_{(t,T]}$.  We say that
  the coefficients $C_{\eps}$, $B_{\eps}$ and $\Psi_{\eps}$ {\em allow
    averaging,} if there exist causal operators $\Sigma$ and $\Pi$
  such that the following property holds: For every simplex
  $\calT\subset Q$, every boundary condition $\xi\in
  H^{1}_*(0,T;\symM)$ and every additive constant $a\in
  H^{1}(0,T;\Rd)$, the corresponding solution $(u^{\eps}, \sigma^\eps,
  e^{\eps}, p^{\eps})$ of the $\eps$-problem \eqref{eq:eps-problem} on
  $\cT$ with $f=0$ and $\U(x,t)=\xi(t)x+a(t)$ satisfies the following:
  As $\eps\to 0$, for a.e. $t\in (0,T)$, the averages of $p^{\eps}$
  and $\sigma^{\eps}$ converge:
  \begin{equation}
    \fint_{\calT}p^{\eps}(t)\rightarrow\Pi(\xi)(t)\,,\qquad
    \fint_{\calT}\sigma^{\eps}(t)\rightarrow\Sigma(\xi)(t)\,.
    \label{eq:averaging-conv-p-sig}
  \end{equation}
  Here, $\fint_{\calT} = |\calT|^{-1} \int_{\calT}$ denotes averages.
  In particular, we demand that limits of (averages of) stress and
  plastic strain depend only on the (time-dependent) boundary
  condition $\xi$, not on $a$ and not on the simplex $\cT$.
\end{defn}

\begin{defn}[Effective equation in the needle problem approach]
  \label{def:limit-problem}The effective plasticity problem in the
  needle problem approach is given by
  \begin{equation}
    -\diver\Sigma(\nabla^{s}u)=f\quad 
    \text{ in } Q\times (0,T)\,,\label{eq:limit-problem}
  \end{equation}
  with boundary condition $u = \U$ on $\partial Q\times (0,T)$.  A
  function $u$ is a solution to this limit problem if $u=\U+v$ holds
  with $\v\in H^{1}(0,T;H_{0}^{1}(\Q;\Rd))$ and
  \eqref{eq:limit-problem} is satisfied in the distributional
  sense. Regarding the expression $\Sigma(\nabla^{s}u)$ we note that,
  for a.e.\,$x\in Q$, the map $t\mapsto \nabla^{s}u(x,t)$ is in the
  space $H^1_*(0,T;\symM)$, hence $\Sigma(\nabla^{s}u)$ is
  well-defined for almost every point in $Q\times (0,T)$.
\end{defn}

\paragraph{Result of the needle problem approach.}

In Theorem 1.6 of \cite{heidaschweizer2014}, the abstract operator
$\Sigma$ is assumed to satisfy two conditions: (i) Averaging
property. This assumption is recalled in Definition
\ref{def:Averaging}. (ii) Admissibility. Admissibility is defined in
Definition 1.5 of \cite{heidaschweizer2014} as: The effective problem
has a solution.

The existence property of the admissibility condition (ii) can be
shown by proving that Galerkin approximations converge to
solutions. We formulate a sufficient condition in this spirit in
Definition \ref{def:Averaging} below. We therefore obtain from Theorem
1.6 of \cite{heidaschweizer2014}:

\begin{thm}[Needle-approach homogenization theorem in plasticity]
  \label{thm:needle-thm} Let $\Q\subset\Rd$ be open and bounded, let
  the data $f$ and $\U$ be as in Assumption \ref {ass:sto-coeffs}, let
  the coefficients $C_{\eps}$, $B_{\eps}$ and $\Psi_{\eps}$ be as
  above, satisfying \eqref {eq:reg-c-b-eps-1}. Let the data allow
  averaging in the sense of Definition \ref{def:Averaging} with causal
  operators $\Sigma$ and $\Pi$, and let $\Sigma$ satisfy the
  admissibility condition of Definition \ref
  {def:admissible-operator}.  Let $(u^\eps,\sigma^\eps,e^\eps,p^\eps)$
  be the weak solutions to the $\eps$-problems
  \eqref{eq:eps-problem}. Then, as $\eps\rightarrow0$, there holds
  \begin{align*}
    &u^{\eps}\weakto u\quad\mbox{weakly in }H^{1}(0,T;H_{0}^{1}(\Q;\Rd))\,,\\
    &p^{\eps}\weakto\Pi(\nabla^{s}u),\quad\sigma^{\eps}\weakto\Sigma(\nabla^{s}u)
    \quad\mbox{weakly in }H^{1}(0,T;L^{2}(\Q;\Rdd))\,,
  \end{align*} 
  where $u$ is the unique weak solution to the homogenized problem
  \[ -\diver\Sigma(\nabla^{s}u)=f\qquad\text{ on } \Q\times (0,T)\] 
  with boundary condition $\U$ in the sense of Definition
  \ref{def:limit-problem}.
\end{thm}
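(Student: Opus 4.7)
The statement is essentially an application of Theorem~1.6 of \cite{heidaschweizer2014} to the present setting; accordingly, the task is to verify that its hypotheses hold and to transfer its conclusions. First I would extract compactness. The uniform a priori bound \eqref{eq:apriori-eps-prob} of Theorem~\ref{thm:uniqueness-eps-problem} gives, along a subsequence (not relabelled),
\[
 u^\eps \weakto u \text{ in } H^1(0,T;H^1(\Q;\Rd))\,,\qquad
 e^\eps,\, p^\eps,\, \sigma^\eps \weakto e,\,p,\,\sigma \text{ in } H^1(0,T;L^2(\Q;\symM))\,,
\]
with $u=\U+v$ and $v\in H^1(0,T;H^1_0(\Q;\Rd))$. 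Passage to the limit in the linear relations $\nabla^s u^\eps = e^\eps + p^\eps$ and in \eqref{eq:eps-weak-1} yields $\nabla^s u = e+p$ and $-\diver\sigma = f$ distributionally. The remaining task is therefore to identify $\sigma = \Sigma(\nabla^s u)$ and $p = \Pi(\nabla^s u)$ pointwise in $\Q\times(0,T)$.

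Next, I would exploit the averaging hypothesis through the needle localization machinery of \cite{heidaschweizer2014}. Given a fine simplicial partition $(\calT_{k,j})_{j}$ of $\Q$ with diameters tending to zero, one replaces the macroscopic boundary datum $\U$ on each simplex $\calT_{k,j}$ by an affine-in-$x$, time-dependent approximation $x\mapsto \xi_{k,j}(t)\,x + a_{k,j}(t)$, where $\xi_{k,j}(t)$ is close to $\fint_{\calT_{k,j}}\nabla^s u(\cdot,t)$. For each such local cell problem, Definition~\ref{def:Averaging} delivers the convergence of the averaged stresses and plastic strains to $\Sigma(\xi_{k,j})(t)$ and $\Pi(\xi_{k,j})(t)$. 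A gluing/comparison argument, based on $H^1$-stability of the $\eps$-problem in its data together with the causal continuity of $\Sigma$ and $\Pi$, shows that the cell solutions on the $\calT_{k,j}$ differ from the restrictions of $(u^\eps,\sigma^\eps,p^\eps)$ by an error that vanishes as $\eps\to 0$ and the mesh is refined. In the double limit this produces the identification $\sigma(x,t) = \Sigma(\nabla^s u(x,\cdot))(t)$ and $p(x,t) = \Pi(\nabla^s u(x,\cdot))(t)$ for a.e.\,$(x,t)\in\Q\times(0,T)$. I expect this gluing/comparison step to be the main obstacle, since the simplex-level information is only about spatial averages and the discrepancy between the true $\eps$-solution on $\Q$ and the collection of simplex-wise cell solutions must be absorbed into correctors with quantitatively small $H^1$-norm, independently of $\eps$.

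Finally, admissibility of $\Sigma$ in the sense of Definition~\ref{def:admissible-operator} provides a weak solution $u$ to $-\diver\Sigma(\nabla^s u)=f$ with boundary datum $\U$ and guarantees its uniqueness, so that the subsequential convergence extracted at the first step upgrades to convergence of the full net $\eps\to 0$. Combining the three steps yields the claimed weak convergences of $u^\eps$, $\sigma^\eps$ and $p^\eps$ towards $u$, $\Sigma(\nabla^s u)$ and $\Pi(\nabla^s u)$, and concludes the proof.
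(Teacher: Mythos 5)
The paper treats this theorem as a direct consequence of Theorem~1.6 of \cite{heidaschweizer2014}; the only new observation is that the Galerkin-type condition of Definition~\ref{def:admissible-operator} is a \emph{sufficient} condition for the admissibility required there (existence of a solution to the effective problem). You correctly open by reducing the statement to that cited theorem, which is precisely the paper's route. Everything after your first paragraph is an attempt to reproduce the internal argument of \cite{heidaschweizer2014}, which the present paper deliberately does not repeat, so it cannot be checked against the paper; but since you offered it, two points deserve a comment.

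First, your sketch omits the construction that gives the method its name: the intermediate \emph{needle problem}, in which the $\eps$-solution is constrained (or compared) to a function that is piecewise affine on a simplicial mesh of size $h$, and one passes to the limit first in $\eps$ (using the averaging hypothesis on each simplex) and then in $h$ (using the admissibility/Galerkin condition). Your ``gluing/comparison argument'' collapses this two-parameter scheme into a single vague step. You do flag it yourself as the main obstacle, which is honest, but the mechanism you suggest --- ``$H^1$-stability of the $\eps$-problem in its data together with the causal continuity of $\Sigma$ and $\Pi$'' --- is not how the comparison is actually carried out; the error control between $u^\eps$ and the simplex-wise needle solution rests on energy identities and div-curl structure, not on a plain data-stability bound.

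Second, you do not make explicit the link between Definition~\ref{def:admissible-operator} and the admissibility notion of \cite{heidaschweizer2014} (``the effective problem has a solution''), which is the one verification the paper actually performs at this point; you instead treat admissibility as directly delivering existence and uniqueness of $u$. Uniqueness of the effective problem is not part of Definition~\ref{def:admissible-operator} and needs a separate (monotonicity-type) argument, which is in \cite{heidaschweizer2014} but is glossed over in your last paragraph. None of this makes your opening reduction wrong, but the supplementary sketch is speculative and would not, as written, reconstruct the proof.
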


\paragraph{An assumption that implies admissibility.} For arbitrary
$h>0$, we use a polygonal domain $\Q_{h}\subset\Q$ and a triangulation
$\grid_{h}$ with the properties
\begin{equation}
  \begin{split} & \grid_{h}:=\{\cT_{k}\}_{k\in\Lambda_{h}}
    \quad\mbox{is a triangulation of }\Q_{h},\quad\diam(\cT_{k})<h
    \quad\forall\,\cT_{k}\in\grid_{h},\\
    & \Q_{h}\text{ has the property that }
    x\in\Q,\dist(x,\partial\Q)\ge h\text{ implies }x\in\Q_{h}\,,\end{split}
  \label{eq:discretproperty}
\end{equation}
where $\cT_{k}$ are disjoint open simplices and
$\Lambda_{h}\subset\Nn$ is a finite set of indices. We always assume
that the sequence of meshes is regular in the sense of \cite{Ciarlet},
Section 3.1. As in \cite{Schweizer2009needle}, we consider the finite
element space of continuous and piecewise linear functions with
vanishing boundary values, 
\begin{equation} 
  Y_{h}:=\left\{ \phi\in
    H_{0}^{1}(\Q)\,|\ \phi|_{\cT_{k}}\ \mbox{is affine  }
    \forall\,\cT_{k}\in\grid_{h},\ \phi\equiv0\text{ on
    }\Q\setminus\Q_{h}\right\} \,.\label{eq:def-Y-h}
\end{equation}

\emph{Discretization of boundary conditions:} We may extend the
triangulation of $\Q_{h}$ by a finite amount of simplices with
diameter not greater than $h$ to obtain a grid $\tilde{\grid}_{h}$
that covers $\Q$ in the sense
$\Q\subset\bigcup_{\cT_{k}\in\tilde{\grid}_{h}} \bar\cT_{k}$ and
introduce the finite element space $\tilde{Y}_{h}:=\left\{ \phi\in
  H^{1}(\Q)\,|\ \phi|_{\cT_{k}\cap\Q}\ \mbox{is affine
  }\forall\,\cT_{k}\in\tilde{\grid}_{h}\right\} \,.$ Denoting by
$R_{\Q,h}$ the $H^{1}$-orthogonal Riesz-projection
$H^{1}(\Q)\rightarrow\tilde{Y}_{h}$, we set $\U_{h}:=R_{\Q,h}(\U)$ and
observe that $\U_{h}\rightarrow\U$ converges strongly in
$H^{1}(0,T;H^{1}(\Q))$ as $h\rightarrow0$.

\begin{defn}[Sufficient condition for admissibility of $\Sigma$]
  \label{def:admissible-operator} We consider a causal operator
  $\Sigma: H^{1}_*(0,T;\symM)\to H^{1}(0,T;\symM)$. We say that
  $\Sigma$ {\em satisfies the sufficient condition for admissibility}
  if the following property holds: Let $h\rightarrow0$ be a sequence
  of positive numbers, let $\grid_{h}$ be a sequence of regular grids
  satisfying \eqref{eq:discretproperty}, and let $v_{h}\in
  L^{2}(0,T;Y_{h})$ be a corresponding sequence of solutions to the
  discretized problems (the existence is guaranteed in
  \cite{heidaschweizer2014})
  \[
  \int_{\Q}\Sigma\left(\nabla^{s}\left(v_{h}+\U_{h}\right)\right):
  \nabla\varphi_h = \int_{\Q}f\varphi_h \qquad\forall\varphi_h\in
  L^{2}(0,T;Y_{h})\,.\] Assume furthermore that the solutions
  converge, $v_{h}\weakto v$ weakly in $H^{1}(0,T;H_{0}^{1}(\Q))$ as
  $h\to 0$.  Then $v$ is a solution to
  \[
  \int_{\Q}\Sigma\left(\nabla^{s}\left(v+\U\right)\right):\nabla\varphi=\int_{\Q}f\varphi
  \qquad\forall\varphi\in L^{2}(0,T;H_{0}^{1}(\Q))\,.\]
\end{defn}

\paragraph{Remaining program.}
Using Theorem \ref{thm:needle-thm}, our stochastic homogenization
result of Theorem \ref{thm:Main-Theorem} can be shown as follows: For
stochastic parameters $C_{\eps}$, $B_{\eps}$ and $\Psi_{\eps}$ we
define causal operators $\Sigma$ and $\Pi$ with cell-problems on
$\Omega$. For these operators, we only have to check the averaging
property of Definition \ref{def:Averaging} and the admissibility
condition of Definition \ref{def:admissible-operator}.

\section{Stochastic cell problem and definition of $\Sigma$}
\label{sec.stoch-cell}

Given a strain evolution $\xi$, we want to define the corresponding
evolution $\Sigma(\xi)$ of plastic stresses. For the strain $\xi$, we
use the function space 
\begin{equation}
  H^{1}_*(0,T;\symM) := \left\{ \xi \in H^{1}(0,T;\symM)\ \left|\ \xi|_{t=0} = 0\right. \right\}
  \label{eq:strain-space}
\end{equation}
of evolutions with vanishing initial values. For any function $\xi\in
H^{1}_*(0,T;\symM)$ we consider the ordinary differential equation
(inclusion) for $p(t,\,.\,)\in L^{2}(\Omega;\symM)$,
\begin{equation}
  \partial_{t}p(t,\omega)
  \in\partial\Psi\left(z(t,\omega)-B(\omega)\, p(t,\omega)\,;\,\omega\right)
  \label{eq:EPISIG-1}
\end{equation}
(equality pointwise a.e.), with the initial condition
$p(0,\omega)=0$. In order to close the system, the function $z(t)$
must be determined through $\xi(t)$ and $p(t)$. We search for a map
$z(t) \in L_{sol}^{2}(\Omega)$, symmetric in every point $\omega$,
i.e. $z(t,\omega) = z^T(t,\omega)$, such that the equality
\begin{equation}
  C z(t) = \xi(t) + \v^s(t) - p(t)\,\label{eq:EPISIG-2}
\end{equation}
holds in $L^{2}(\Omega)$ for a function $\v\in
L^2(0,T;\cV_{pot}^{2}(\Omega))$.  Throughout this text we use $z^s = (z
+ z^T)/2$ for the symmetric part of a matrix $z$; for the symmetric
matrix $z$ there holds $z = z^s$.  Note that $\v\in
\cV_{pot}^{2}(\Omega)$ does not imply $\v^s\in \cV_{pot}^{2}(\Omega)$.  Up
to the matrix factor $C$ and the symmetrization, equation
\eqref{eq:EPISIG-2} is a Helmholz decomposition of the field $\xi(t) -
p(t)$: Essentially, the given field is decomposed into a gradient
field and a solennoidal field. It is therefore plausible that, given
$\xi(t)$ and $p(t)$, \eqref{eq:EPISIG-2} yields $z(t)$ and thus closes
the evolution equation \eqref{eq:EPISIG-1}.  The rigorous existence
result is provided in the following theorem.

\begin{thm}
  \label{thm:E-Pi-Sig-well-def} Let $C$, $B$ and $\Psi$ be as in
  Assumption \ref{ass:sto-coeffs}. Then, for $\xi\in
  H^{1}_*(0,T;\symM)$, there exists a unique solution $(p,z,\v) \in
  H^{1}(0,T;L^{2}(\Omega;\symM))\times
  H^{1}(0,T;L^{2}_{sol}(\Omega;\Rdd)) \times
  H^{1}(0,T;\cV^{2}_{pot}(\Omega;\Rdd))$ with $z = z^s$ to
  \eqref{eq:EPISIG-1}--\eqref{eq:EPISIG-2} satisfying the a priori
  estimate
  \begin{equation} 
    \norm p_{\cV^{1}_{0}} + \norm  z_{\cV^{1}_{0}} + \norm{\v}_{\cV^{1}_{0}}\leq
    C \norm{\xi}_{H^{1}(0,T)}\,,\label{eq:EPISIG-apriori-global}
  \end{equation}
  where $\cV^{1}_{0}:=H^{1}(0,T;L^{2}(\Omega;\symM))$. The solution
  $(p,z,\v)\in(\cV^{1}_{0})^{3}$ depends continuously on $\xi\in
  H^{1}_*(0,T;\symM)$ with respect to the weak topologies in both
  spaces.
\end{thm}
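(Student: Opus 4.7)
The plan is to decouple system \eqref{eq:EPISIG-1}--\eqref{eq:EPISIG-2} by treating \eqref{eq:EPISIG-2} as a stationary cell problem that, given $p(t)\in L^{2}(\Omega;\symM)$ and $\xi(t)\in\symM$, determines $(z(t),v(t))$ uniquely through affine bounded maps $Z$ and $V$. Substituting $z=Z[p,\xi]$ into \eqref{eq:EPISIG-1} turns the inclusion into a parabolic differential inclusion for $p$ alone in the Hilbert space $L^{2}(\Omega;\symM)$, which I solve by the standard gradient-flow theory for maximal monotone operators. The a priori bound, uniqueness, and continuous dependence then all follow from a single energy identity obtained by testing the flow rule with $\partial_{t}p$.

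\emph{Elliptic subproblem.} Set $L_{sol,s}^{2}(\Omega):=L_{sol}^{2}(\Omega)\cap L^{2}(\Omega;\symM)$. On this closed subspace the bilinear form $a(z,w):=\int_{\Omega}Cz:w$ is continuous, symmetric, and coercive by \eqref{eq:ass-data-1}, so Lax--Milgram yields a unique $z\in L_{sol,s}^{2}$ with $a(z,w)=\int_{\Omega}(\xi-p):w$ for every $w\in L_{sol,s}^{2}$. The residual $\psi:=Cz+p-\xi$ is symmetric and $L^{2}(\Omega;\symM)$-orthogonal to $L_{sol,s}^{2}$. Since $\cV^{2}_{pot}\perp L_{sol}^{2}$ by Jikov--Kozlov--Oleinik and $A:B=A^{s}:B$ when $B$ is symmetric, the subspaces $\{v^{s}:v\in\cV^{2}_{pot}\}$ and $L_{sol,s}^{2}$ are orthogonal in $L^{2}(\Omega;\symM)$; a Korn-type argument in the stochastic setting shows that they actually span it, yielding the symmetrized decomposition
\[
L^{2}(\Omega;\symM)=\{v^{s}:v\in\cV^{2}_{pot}(\Omega)\}\oplus L_{sol,s}^{2}(\Omega).
\]
Thus $\psi=v^{s}$ for a unique $v\in\cV^{2}_{pot}$, and we obtain affine continuous operators $Z:(p,\xi)\mapsto z$, $V:(p,\xi)\mapsto v$ whose norms depend only on $\beta$ and $\gamma$.

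\emph{Evolution for $p$.} The flow rule becomes $\partial_{t}p\in\partial\Psi\bigl(Z[p,\xi(t)]-Bp\,;\,\cdot\bigr)$ with $p(0)=0$. Introducing the strongly convex energy
\[
\mathcal{E}(t,p):=\tfrac{1}{2}\int_{\Omega}C^{-1}\bigl(\xi(t)+V[p,\xi(t)]^{s}-p\bigr):\bigl(\xi(t)+V[p,\xi(t)]^{s}-p\bigr)+\tfrac{1}{2}\int_{\Omega}Bp:p
\]
and the dissipation $\mathcal{R}(q):=\int_{\Omega}\Psi^{*}(q;\omega)$, the first-order condition that characterises the minimiser $V[p,\xi]$ gives $-D_{p}\mathcal{E}(t,p)=Z[p,\xi(t)]-Bp$, so the problem is the Hilbert-space gradient flow $0\in\partial\mathcal{R}(\partial_{t}p)+D_{p}\mathcal{E}(t,p)$. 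Uniform strong convexity of $\mathcal{E}(t,\cdot)$ (with modulus depending only on $\beta,\gamma$) together with the $H^{1}$-regularity of $t\mapsto\xi(t)$ yields existence of a unique $p\in H^{1}(0,T;L^{2}(\Omega;\symM))$ by Brezis' theory of maximal monotone operators; equivalently one may argue by implicit-Euler time discretization combined with Moreau--Yosida regularization of $\Psi$. The fields $z=Z[p,\xi]$ and $v=V[p,\xi]$ then inherit $H^{1}$-regularity in $t$ through the Lipschitz continuity of $Z$ and $V$ in their arguments.

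\emph{Estimates, uniqueness, continuity.} Convexity of $\Psi$ together with $\Psi(0)=0$ gives the pointwise inequality $\partial_{t}p:(z-Bp)\geq\Psi(z-Bp)\geq 0$. Differentiating \eqref{eq:EPISIG-2} in time, integrating over $\Omega$, and using $\int_{\Omega}(\partial_{t}v)^{s}:z=\int_{\Omega}\partial_{t}v:z=0$ (the orthogonality of $\cV^{2}_{pot}$ and $L_{sol}^{2}$ applied to the symmetric field $z$) produces
\[
\frac{d}{dt}\Bigl[\tfrac{1}{2}\int_{\Omega}Cz:z+\tfrac{1}{2}\int_{\Omega}Bp:p\Bigr]\leq\partial_{t}\xi:\int_{\Omega}z,
\]
from which Gronwall's lemma yields \eqref{eq:EPISIG-apriori-global}. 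Applied to the difference of two solutions driven by the same $\xi$ the identity has vanishing right-hand side and forces $\delta p\equiv 0$, $\delta z\equiv 0$, whence $\delta v\equiv 0$ by injectivity of $v\mapsto v^{s}$ on $\cV^{2}_{pot}$; applied to $\xi_{1}-\xi_{2}$ in place of $\xi$ it delivers weak continuous dependence on the input. The principal technical obstacle is the symmetrized Helmholtz decomposition of the elliptic step, because symmetrization does not commute with the Jikov--Kozlov--Oleinik solenoidal projection: one must justify the closedness of $\{v^{s}:v\in\cV^{2}_{pot}\}$ in $L^{2}(\Omega;\symM)$ by an ergodic-setting Korn inequality, so that this subspace really coincides with the orthogonal complement of $L_{sol,s}^{2}$ and not merely with its closure.
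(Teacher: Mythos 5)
Your decoupling strategy — stationary elliptic step via a symmetrized Helmholtz decomposition, then a Hilbert-space evolution for $p$ alone — is a legitimate and in some ways cleaner organisation than the paper's simultaneous Galerkin + Moreau--Yosida construction, and you correctly identify Korn's inequality on $\Omega$ (the paper's Lemma \ref{lem:Korn-Omega}) as the technical ingredient that makes $\{v^{s}:v\in\cV^{2}_{pot}\}$ closed and hence makes the decomposition $L^{2}(\Omega;\symM)=\{v^{s}:v\in\cV^{2}_{pot}\}\oplus L_{sol,s}^{2}(\Omega)$ work. The paper performs the same elliptic reduction (the operator $V_{\tilde\xi}$ in Step 1 of Lemma \ref{lem:EPISIG-apriori-1}) by Lax--Milgram on $\cV^{2}_{pot,n}$ with Korn for coercivity; your formulation is the dual description of the same saddle point. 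The Biot-type reformulation $0\in\partial\mathcal{R}(\partial_{t}p)+D_{p}\mathcal{E}(t,p)$ and the envelope-theorem computation $-D_{p}\mathcal{E}=z-Bp$ are correct. The uniqueness and weak-continuity argument by monotonicity is the same as the paper's.

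There are, however, two genuine gaps. First, the a priori estimate you derive does not give \eqref{eq:EPISIG-apriori-global}. The inequality $\frac{d}{dt}\bigl[\tfrac12\int_{\Omega}Cz:z+\tfrac12\int_{\Omega}Bp:p\bigr]\leq\partial_{t}\xi:\int_{\Omega}z$ followed by Gronwall only controls $\sup_{t}\bigl(\norm{z(t)}_{L^{2}}+\norm{p(t)}_{L^{2}}\bigr)$, i.e.\ an $L^{\infty}(0,T;L^{2})$ bound, whereas \eqref{eq:EPISIG-apriori-global} is a bound in $\cV^{1}_{0}=H^{1}(0,T;L^{2})$ that requires control of $\partial_{t}p$, $\partial_{t}z$, $\partial_{t}\v$ in $L^{2}(0,T;L^{2})$. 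The paper obtains this by a second, order-one energy estimate \eqref{eq:EPISIG-ineq-chain-1}: one differentiates \eqref{eq:EPISIG-approx-2} in time, tests with $\partial_{t}z$, and uses that the subdifferential term $\frac{d}{dt}\Upsilon^{\delta}(z-Bp)$ integrates to a nonnegative quantity, so that $\int_{0}^{t}\int_{\Omega}\bigl((C\partial_{t}z):\partial_{t}z+(B\partial_{t}p):\partial_{t}p\bigr)$ is bounded by the data. You omit this step entirely, so your argument does not prove the asserted estimate. Second, the existence step is under-specified: the operator $p\mapsto-\partial\Psi\bigl(Z[p,\xi(t)]-Bp\bigr)$ is not maximal monotone (a composition with an affine map does not preserve maximal monotonicity), so Brezis' theory does not apply directly to $\partial_{t}p+A(p)\ni 0$; one would first need to rewrite the system as a genuine gradient flow for $q=z-Bp$ in a transformed scalar product, and you do not exhibit that reduction. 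The alternative you mention (implicit Euler plus Moreau--Yosida) is the correct route, but it is not "equivalent" in the sense of being obvious: passing $\delta\to 0$ requires the weak lower semicontinuity of $\Upsilon$, $\Upsilon^{*}$, $\Upsilon^{\delta}$, $\Upsilon^{\delta\ast}$, which the paper establishes in Lemmas \ref{lem:weak-cont}, \ref{lem:weak-lsc-psi}, \ref{lem:Ben-Veneroni-generalized} and then exploits through the Fenchel (in)equality to recover the flow rule in the limit. Without that machinery, you cannot assert that the regularized solutions converge to a solution of \eqref{eq:EPISIG-1}.
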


Theorem \ref {thm:E-Pi-Sig-well-def} permits us to define the
operators $\Sigma$ and $\Pi$.
\begin{defn}[The effective plasticity operators]\label{def:Sigma}
  For arbitrary $\xi\in H^{1}_*(0,T;\symM)$, let $(p,z,\v)$ be the
  solution of \eqref{eq:EPISIG-1}--\eqref{eq:EPISIG-2} with $z =
  z^s$. We set
  \begin{equation}
    \Sigma(\xi)(t):=\int_{\Omega}z(t,\omega)\, d\cP(\omega)\,,\qquad
    \Pi(\xi)(t):=\int_{\Omega}p(t,\omega)\, d\cP(\omega)\,.\label{eq:def-sig-stoch}
  \end{equation}
  We note that the operators $\Sigma,\,\Pi: H^{1}_*(0,T;\symM) \to
  H^{1}(0,T;\symM)$ are well defined and continuous by Theorem
  \ref{thm:E-Pi-Sig-well-def}.
\end{defn}

The rest of this section is devoted to the proof of Theorem
\ref{thm:E-Pi-Sig-well-def}.  We proceed as follows: In Section
\ref{sub:Galerkin}, we introduce a Galerkin approximation scheme for
\eqref{eq:EPISIG-1}--\eqref{eq:EPISIG-2}, using additionally a
regularization of $\Psi$. In \ref{sub:Convex-functionals}, we recall
some results from the theory of convex functions, in
\ref{sub:Korn-inequality} we provide a Korn's inequality in the
probability space $\Omega$. In Section
\ref{sub:Approximation-procedure} we prove existence and uniqueness of
solutions to the approximate problems and show that these solutions
satisfy uniform bounds. Finally, in Section
\ref{sub:Proof-of-Thm-Main2}, we show that the solutions of the
approximate problems converge to the unique solution of the original
system \eqref{eq:EPISIG-1}--\eqref{eq:EPISIG-2}.

\subsection{\label{sub:Galerkin}Galerkin method and regularization}

\paragraph*{Finite dimensional approximation.}

In what follows, let $\left\langle \varphi,\psi\right\rangle
_{\Omega}:=\int_{\Omega}\varphi:\psi\,d\cP$ denote the scalar product
in $L^{2}(\Omega):=L^{2}(\Omega;\Rdd)$. We choose complete orthonormal
systems $\left\{ e_{k}\right\} _{k\in\Nn}$ of $\cV_{pot}^{2}(\Omega)$
and $\left\{ \tilde{e}_{k}\right\} _{k\in\bbN}$ of
$L_{sol}^{2}(\Omega)$ and consider the finite dimensional spaces
\begin{align*}
  &\tilde{L}_{n}^{2}(\Omega) :=\mbox{span}\left\{ e_{k}\right\}
  _{k=1,\dots,n}\oplus\mbox{span}\left\{ \tilde{e}_{k}\right\}
  _{k=1,\dots,n}\,, & \LOMn & :=\tilde{L}_{n}(\Omega)\oplus\left\{
    v^s\,|\,
    v\in\tilde{L}_{n}(\Omega)\right\} \,,\\
  &\cV_{pot,n}^{2}(\Omega)  :=\cV_{pot}^{2}(\Omega)\cap\LOMn\,, &
  L_{sol,n}^{2}(\Omega) & :=L_{sol}^{2}(\Omega)\cap\LOMn\,.
\end{align*} 
We furthermore set $\LOMs:=L^{2}(\Omega;\symM)$ and $\LOMns:=\left\{
  v^{s}\,|\, v\in\LOMn\right\} $. Since constants are in
$L_{sol}^{2}(\Omega)$, we can assume that they are in $\LOMn$ and thus
in $L_{sol,n}^{2}(\Omega)$ for every $n\geq d^2$. We finally introduce
the orthogonal projection $P_{n}:\,L^2(\Omega)\rightarrow\LOMn$ and
note that $P_{n}\varphi\rightarrow\varphi$ strongly in $\LOM$ as
$n\rightarrow\infty$ for every $\varphi\in\LOM$.

\paragraph*{Definition of regularized convex functionals.}

In order to prove Theorem \ref{thm:E-Pi-Sig-well-def}, we consider the
family of Moreau-Yosida approximations\begin{equation}
\Psi^{\delta}(\sigma,\omega):=\inf_{\xi\in\symM}\left\{
\Psi(\xi,\omega)+\frac{\left|\xi-\sigma\right|^{2}}{2\delta}\right\}
\,,\label{eq:Yosida}\end{equation} satisfying (see \cite{Rock1998},
Exercise 12.23; for the definition of the subdifferential
$\partial\Psi^{\delta}$ see \eqref{eq:def-Subdiff})\begin{gather}
  \Psi^{\delta}\,:\,\symM\rightarrow\bbR\quad\mbox{is convex, coercive
    and continuously differentiable}\nonumber
  \\ \partial\Psi^{\delta}\,:\,\symM\rightarrow\symM\quad\mbox{is
    single valued and globally
    Lipschitz-continuous}\label{eq:prop-psi-delta}\\ \lim_{\delta\rightarrow0}\Psi^{\delta}(\sigma;\omega)=\Psi(\sigma;\omega)\qquad\forall\sigma\in\symM\,,\,\mbox{and
    a.e. }\omega\in\Omega\,.\nonumber \end{gather} Note that the last
convergence is monotone, since $\Psi^{\delta_2}\geq\Psi^{\delta_1}$
for all $\delta_2<\delta_1$. Given $\Psi$ and $\Psi^{\delta}$, we
consider the corresponding functionals
\begin{equation}
  \Upsilon, \Upsilon^{\delta}:\,\,\LOMs\rightarrow\Rr\,,\quad
  \Upsilon(z) := \int_{\Omega}\Psi(z(\omega))\, d\cP(\omega)\,,\
  \Upsilon^{\delta}(z) := \int_{\Omega}\Psi^{\delta}(z(\omega))\, d\cP(\omega)
  \,.\label{eq:Xi-Psi-duality-1}
\end{equation}
We denote by $\Upsilon_{n}:\, \LOMns\rightarrow\Rr$ the restriction of
$\Upsilon$ to $\LOMns$.  the subdifferential of $\Upsilon_{n}$ is
$\partial\Upsilon_{n}$. Accordingly, we can define
$\Upsilon_{n}^\delta$ and $\partial\Upsilon_{n}^\delta$.

\subsubsection*{The approximate problem for
  \eqref{eq:EPISIG-1}--\eqref{eq:EPISIG-2}}

We consider the following problem on discretized function spaces:
Given an evolution $\xi\in H^{1}_*(0,T;\symM)$, we look for
\begin{align*} 
  p_{\delta,n} & \in C^{1}(0,T;\LOMns)\,, & z_{\delta,n} & \in
  H^{1}(0,T;L_{sol,n}^{2}(\Omega))\,, & \v_{\delta,n} & \in
  H^{1}(0,T;\cV_{pot,n}^{2}(\Omega))\,,
\end{align*} 
with the symmetry $z_{\delta,n}=z_{\delta,n}^{s}$ , satisfying
\begin{equation}
  \partial_{t}p_{\delta,n}=\partial\Upsilon_{n}^{\delta}
  \left(z_{\delta,n}-B_{n}\, p_{\delta,n}\right)\label{eq:EPISIG-approx-1}
\end{equation}
and $C_{n}\, z_{\delta,n}= \xi + \v^s_{\delta,n} - p_{\delta,n}$. The
last equation can be written as
\begin{equation}
  z_{\delta,n}=C_{n}^{-1}\left(\xi+\v^s_{\delta,n}-p_{\delta,n}\right)\,.\label{eq:EPISIG-approx-2}
\end{equation}
Here, $B_{n},\, C_{n}:\LOMns\rightarrow\LOMns$ are bounded positive
(and thus invertible) operators defined through \[ \left\langle
  B_{n}\psi,\varphi\right\rangle
_{\Omega}=\int_{\Omega}\left(B\psi\right):\varphi\,,\quad\left\langle
  C_{n}\psi,\varphi\right\rangle
_{\Omega}=\int_{\Omega}\left(C\psi\right):\varphi\qquad\forall\varphi,\psi\in\LOMns\,.\]

We obtain the existence and uniqueness of solutions to
\eqref{eq:EPISIG-approx-1}--\eqref{eq:EPISIG-approx-2} from the
Picard-Lindel\"of theorem: We show that the system can be understood
as a single ordinary differential equation for $p_{\delta,n}$ with
Lipschitz continuous right hand side, and that the solutions are
uniformly bounded.

\subsection{\label{sub:Convex-functionals}Convex functionals }

\paragraph*{Basic concepts of convex functions.}

We recall some well known results from convex analysis on a separable
Hilbert space $X$ with scalar product {}``$\cdot$''. In the following,
$\varphi:X\to\Rr\cup\{+\infty\}$ is a convex and lower-semicontinuous
functional with $\varphi\not\equiv+\infty$.  The domain of $\varphi$
is $dom(\varphi):=\left\{ \sigma\in
  X|\varphi(\sigma)<+\infty\right\}$, and the Legendre-Fenchel
conjugate $\varphi^{*}$ is defined by \[
\varphi^{*}:X\to\Rr\cup\{+\infty\},\quad\varepsilon\mapsto\sup_{\sigma\in
  X}\{\varepsilon\cdot\sigma-\varphi(\sigma)\}.\] The subdifferential
$\partial\varphi:dom(\varphi)\to\mathcal{P}(X)$ is defined
by \begin{equation}
  \partial\varphi(\sigma)=\left\{ \varepsilon\in X\,|\,\varphi(\xi)\geq\varphi(\sigma)+\varepsilon\cdot(\xi-\sigma)\quad\forall\,\xi\in X\right\} .\label{eq:def-Subdiff}\end{equation}
A multivalued operator $f:dom(f)\subset X\to\mathcal{P}(X)$ is said to
be \textit{monotone} if \[
(\sigma_{1}-\sigma_{2})\cdot(\varepsilon_{1}-\varepsilon_{2})\geq0,\quad\forall\,\varepsilon_{i}\in
dom(f),\quad\sigma_{i}\in f(\varepsilon_{i}),\ (i=1,2).\] In what
follows, we frequently use the following properties of convex
functionals \cite{Rock1998}.
\begin{lem}
\label{lemma:convprop} For every convex and lower semicontinuous function $\varphi$ on a
Hilbert space $X$ with $\varphi\not\equiv+\infty$ holds 
\begin{itemize}
\item [(i)] $\varphi^{*}$ is convex, lower-semicontinuous, and
  $dom(\varphi^{*})\neq\emptyset$\vspace*{-2mm}
\item [(ii)] $\partial\varphi,\ \partial\varphi^{*}\ $ are monotone operators\vspace*{-2mm}
\item [(iii)]
  $\varphi(\sigma)+\varphi^{*}(\varepsilon)\geq\sigma\cdot\varepsilon
  \qquad\forall\,\sigma,\varepsilon\in X$\vspace*{-2mm}
\item [(iv)] $\sigma\in dom(\varphi)\mbox{ and
  }\varepsilon\in\partial\varphi(\sigma)\ \Leftrightarrow\
  \varepsilon\in dom(\varphi^{*})\mbox{ and
  }\sigma\in\partial\varphi^{*}(\varepsilon)$\vspace*{-2mm}
\item [(v)] $\varepsilon\in dom(\varphi^{*})\mbox{ and
  }\sigma\in\partial\varphi^{*}(\varepsilon)\ \ \Leftrightarrow\
  \varphi(\sigma)+\varphi^{*}(\varepsilon)=\sigma\cdot\varepsilon$\vspace*{-2mm}
\item [(vi)] $\varphi^{\ast\ast}=\varphi$.
\end{itemize}
\end{lem}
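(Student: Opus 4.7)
The plan is to treat the six items in the order (i)--(iii), then the Fenchel equality, then the biconjugate theorem (vi), and finally (iv) and (v). Every assertion is classical and, except for (vi), reduces to manipulating the defining inequality of the subdifferential. The one nontrivial ingredient is a geometric separation argument in $X\times\Rr$.

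For (i), observe that $\varphi^{*}$ is the pointwise supremum, taken over $\sigma\in\dom(\varphi)$, of the continuous affine functionals $\varepsilon\mapsto\varepsilon\cdot\sigma-\varphi(\sigma)$; any such supremum is convex and lower semicontinuous, so the first two claims are immediate. Non-emptiness of $\dom(\varphi^{*})$ amounts to the existence of one continuous affine minorant of $\varphi$, obtained by separating the closed convex epigraph $\mathrm{epi}(\varphi)\subset X\times\Rr$ from a point $(\sigma_{0},\alpha)$ with $\alpha<\varphi(\sigma_{0})$ via geometric Hahn--Banach, and, if necessary, tilting the resulting hyperplane so that it is non-vertical. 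For (ii), adding the defining subgradient inequalities for $\varepsilon_{i}\in\partial\varphi(\sigma_{i})$, namely $\varphi(\sigma_{2})\geq\varphi(\sigma_{1})+\varepsilon_{1}\cdot(\sigma_{2}-\sigma_{1})$ and $\varphi(\sigma_{1})\geq\varphi(\sigma_{2})+\varepsilon_{2}\cdot(\sigma_{1}-\sigma_{2})$, yields $(\varepsilon_{1}-\varepsilon_{2})\cdot(\sigma_{1}-\sigma_{2})\geq 0$; the same argument applied to $\varphi^{*}$ proves monotonicity of $\partial\varphi^{*}$. For (iii), Young's inequality is tautological from the definition of the supremum: $\varphi^{*}(\varepsilon)\geq\sigma\cdot\varepsilon-\varphi(\sigma)$ for every $\sigma,\varepsilon\in X$.

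The next step is the \emph{Fenchel equality}: $\varepsilon\in\partial\varphi(\sigma)$ is equivalent to $\varphi(\sigma)+\varphi^{*}(\varepsilon)=\sigma\cdot\varepsilon$ with $\sigma\in\dom(\varphi)$ and $\varepsilon\in\dom(\varphi^{*})$. If $\varepsilon\in\partial\varphi(\sigma)$, then $\xi\cdot\varepsilon-\varphi(\xi)\leq\sigma\cdot\varepsilon-\varphi(\sigma)$ for every $\xi\in X$, so the supremum defining $\varphi^{*}(\varepsilon)$ is attained at $\xi=\sigma$, giving the equality. Conversely, combining the equality with Young's inequality $\varphi(\xi)+\varphi^{*}(\varepsilon)\geq\xi\cdot\varepsilon$ applied at an arbitrary $\xi$ yields $\varphi(\xi)\geq\varphi(\sigma)+\varepsilon\cdot(\xi-\sigma)$, i.e.\ $\varepsilon\in\partial\varphi(\sigma)$. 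This establishes (v).

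The main step is the Fenchel--Moreau biconjugate theorem (vi), which is the single place where Hahn--Banach is used essentially. The inequality $\varphi^{**}\leq\varphi$ follows directly from (iii). For the reverse inequality, fix $\sigma_{0}\in X$ and $\alpha<\varphi(\sigma_{0})$; the point $(\sigma_{0},\alpha)$ lies strictly below the nonempty closed convex set $\mathrm{epi}(\varphi)\subset X\times\Rr$, so geometric Hahn--Banach produces a continuous affine functional $\ell(\sigma)=\varepsilon\cdot\sigma-\beta$ with $\ell\leq\varphi$ on $X$ and $\ell(\sigma_{0})>\alpha$. The inequality $\ell\leq\varphi$ translates into $\varphi^{*}(\varepsilon)\leq\beta$, whence $\varphi^{**}(\sigma_{0})\geq\sigma_{0}\cdot\varepsilon-\beta=\ell(\sigma_{0})>\alpha$; letting $\alpha\uparrow\varphi(\sigma_{0})$ yields $\varphi^{**}(\sigma_{0})\geq\varphi(\sigma_{0})$. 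The delicate case $\varphi(\sigma_{0})=+\infty$ is handled by first producing a non-vertical affine minorant (as in (i)) and then adding a large multiple of it to the separating hyperplane to tilt any vertical component away, producing a continuous affine minorant $\ell\leq\varphi$ with $\ell(\sigma_{0})$ arbitrarily large; this non-vertical tilting is the one real technical obstacle in the whole lemma. With (vi) in hand, (iv) follows by applying the Fenchel equality just established to $\varphi^{*}$ in place of $\varphi$, noting that $(\varphi^{*})^{*}=\varphi^{**}=\varphi$: the characterisation becomes $\sigma\in\partial\varphi^{*}(\varepsilon)\Leftrightarrow\varphi^{*}(\varepsilon)+\varphi(\sigma)=\sigma\cdot\varepsilon\Leftrightarrow\varepsilon\in\partial\varphi(\sigma)$, which is exactly the biconditional of (iv).
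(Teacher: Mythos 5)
Your proof is correct and complete. Note that the paper itself gives no proof of this lemma: it is quoted as standard convex analysis with a reference to Rockafellar--Wets, so the comparison is with the textbook route, which is precisely the one you follow -- (i)--(iii) directly from the definitions, Fenchel's equality for $\partial\varphi$, the Fenchel--Moreau biconjugate theorem via Hahn--Banach separation of $\mathrm{epi}(\varphi)$ in $X\times\Rr$ (with the non-vertical tilting for points outside the domain), and finally (iv), (v) by conjugate symmetry using $\varphi^{\ast\ast}=\varphi$; what your write-up buys is self-containedness, at the cost of reproving classical material. Two small remarks. First, the sentence ``This establishes (v)'' is premature: what you have proved at that point is $\varepsilon\in\partial\varphi(\sigma)\Leftrightarrow\varphi(\sigma)+\varphi^{*}(\varepsilon)=\sigma\cdot\varepsilon$, i.e.\ the equality characterization of the \emph{left-hand} side of (iv), whereas (v) as stated concerns $\sigma\in\partial\varphi^{*}(\varepsilon)$ and only follows once $\varphi^{\ast\ast}=\varphi$ is available; your concluding chain $\sigma\in\partial\varphi^{*}(\varepsilon)\Leftrightarrow\varphi^{*}(\varepsilon)+\varphi(\sigma)=\sigma\cdot\varepsilon\Leftrightarrow\varepsilon\in\partial\varphi(\sigma)$ does deliver both (iv) and (v), so this is a labeling issue, not a gap. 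Second, in (i) the tilting is not needed if you separate from a point $(\sigma_{0},\alpha)$ with $\sigma_{0}\in\dom(\varphi)$ and $\alpha<\varphi(\sigma_{0})$: since $(\sigma_{0},\varphi(\sigma_{0}))\in\mathrm{epi}(\varphi)$ and the epigraph is unbounded upwards, the coefficient of the real variable in the separating functional is automatically strictly positive; the genuine tilting argument is required only in (vi) at points $\sigma_{0}\notin\dom(\varphi)$, where you invoke it correctly.
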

We refer to (v) as Fenchel's equality and to (iii) as Fenchel's \emph{in}equality.

\subsubsection*{Continuity properties of $\Upsilon$ and
  $\Upsilon^{\delta}$ and subdifferentials}

In order to obtain the subdifferential of the functional $\Upsilon :
\LOMs \to \R$ we calculate
\begin{align}
  &a\in\partial\Upsilon(z)\quad
  \Leftrightarrow\quad\Ups(z+\psi)\geq\Ups(z)
  +\scp a{\psi}_{\Omega}\quad\forall\psi\in\LOMs\nonumber \\
  &\qquad
  \Leftrightarrow\;\;\int_{\Omega}\Psi(z+\psi)\geq\int_{\Omega}\Psi(z)+\scp
  a{\psi}_{\Omega}\quad\forall\psi\in\LOMs\nonumber \\
  &\qquad \Leftrightarrow\;\; a(\omega)\in\partial\Psi(z(\omega))\text
  { for a.e. }\omega\in\Omega\,.\label{eq:def-subdif-Xi}
\end{align} 
Similarly, $a\in \partial\Ups^\delta(z)$ if and only if
$a\in\partial\Psi^\delta(z)$ almost everywhere. Both subdifferentials
are therefore single-valued and we may identify
$\partial\Upsilon^{\delta}(z)=\partial\Psi^{\delta}(z)$. We next
determine the subdifferential of the restricted functional
$\Upsilon_n^\delta$.
\begin{lem}
  The functionals $\Upsilon_{n}^{\delta}$ have a single valued
  subdifferential in every $z_{0}\in\LOMns$, given through
  \begin{equation}
    \partial\Upsilon_{n}^{\delta}(z_{0})=P_{n}\partial\Psi^{\delta}(z_{0})\,.
    \label{eq:funct-deriv}
  \end{equation}
\end{lem}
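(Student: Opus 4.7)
The plan is to mimic the pointwise calculation \eqref{eq:def-subdif-Xi} but keep track of how the restriction to the finite-dimensional symmetric subspace $\LOMns$ forces the Riesz representative to be projected onto $\LOMn$. Since $\partial\Psi^{\delta}$ is single valued and globally Lipschitz by \eqref{eq:prop-psi-delta}, the composition $\omega \mapsto \partial\Psi^{\delta}(z_{0}(\omega))$ defines an element of $\LOMs$, and the same argument that leads to \eqref{eq:def-subdif-Xi} (applied to $\Ups^{\delta}$ in place of $\Ups$) yields $\partial\Ups^{\delta}(z_{0}) = \{\partial\Psi^{\delta}(z_{0})\}$ in $\LOMs$. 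In particular, $\Ups^{\delta}$ is G\^ateaux differentiable on $\LOMs$ with derivative $\partial\Psi^{\delta}(z_{0})$, as can be seen either by citing convexity together with single-valuedness of the subgradient, or directly by dominated convergence using the Lipschitz bound on $\partial\Psi^{\delta}$.

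Next I would verify that $P_{n}\partial\Psi^{\delta}(z_{0})$ actually lies in $\LOMns$. The target space $\LOMn = \tilde L_{n}(\Omega) \oplus \{v^{s}\,|\,v\in \tilde L_{n}(\Omega)\}$ is closed under symmetrization by construction, hence so is the orthogonal projection $P_{n}$, i.e., $P_{n}$ maps symmetric fields to symmetric fields. Consequently $P_{n}\partial\Psi^{\delta}(z_{0}) \in \LOMn \cap \LOMs = \LOMns$, so the right-hand side of \eqref{eq:funct-deriv} is a well-defined singleton in the correct space.

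The main step is then to read off both inclusions of \eqref{eq:funct-deriv}. For ``$\supset$'', I would plug arbitrary $\psi \in \LOMns$ into the convexity inequality for $\Ups^{\delta}$ on $\LOMs$,
\begin{equation*}
\Ups^{\delta}(z_{0}+\psi) \geq \Ups^{\delta}(z_{0}) + \scp{\partial\Psi^{\delta}(z_{0})}{\psi}_{\Omega},
\end{equation*}
and use that $\psi \in \LOMn$ implies $\scp{\partial\Psi^{\delta}(z_{0})}{\psi}_{\Omega} = \scp{P_{n}\partial\Psi^{\delta}(z_{0})}{\psi}_{\Omega}$; this shows $P_{n}\partial\Psi^{\delta}(z_{0}) \in \partial\Ups_{n}^{\delta}(z_{0})$. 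For ``$\subset$'', given any $a \in \partial\Ups_{n}^{\delta}(z_{0}) \subset \LOMns$, I would test the defining inequality with $\pm t\psi$ for $\psi \in \LOMns$ and $t\downarrow 0$; dividing by $t$ and invoking the G\^ateaux derivative of $\Ups^{\delta}$ computed in the first paragraph gives $\scp{a-\partial\Psi^{\delta}(z_{0})}{\psi}_{\Omega} = 0$ for every $\psi \in \LOMns$, so $a$ is the orthogonal projection of $\partial\Psi^{\delta}(z_{0})$ onto $\LOMns$, which coincides with $P_{n}\partial\Psi^{\delta}(z_{0})$ by the previous paragraph.

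I do not expect a substantial obstacle here: the only genuine subtlety is the bookkeeping that $P_{n}$ (defined as projection onto $\LOMn$) correctly projects onto $\LOMns$ when applied to a symmetric argument, which is why it was worth isolating the closure-under-symmetrization of $\LOMn$ as a separate observation. Everything else is an application of the pointwise characterization \eqref{eq:def-subdif-Xi}, single-valuedness and Lipschitz continuity of $\partial\Psi^{\delta}$ from \eqref{eq:prop-psi-delta}, and the standard fact that a convex, G\^ateaux-differentiable functional on a Hilbert space has its G\^ateaux derivative as the unique subgradient.
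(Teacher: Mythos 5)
Your proposal is correct and follows essentially the same route as the paper: both compute the G\^ateaux derivative $\partial\Psi^{\delta}(z_0)$ of $\Upsilon^{\delta}$ on the full space $\LOMs$ and then identify the subgradient of the restriction with its orthogonal projection onto the finite-dimensional subspace, by testing the subdifferential inequality with $\pm t\psi$ and passing to $t\to 0$. Your version is marginally more careful in two respects the paper leaves implicit -- it separates the two inclusions, so the ``$\supset$'' step establishes nonemptiness of $\partial\Upsilon_n^{\delta}(z_0)$, and it isolates the observation that $P_n$ commutes with symmetrization (so that the projection onto $\LOMn$ coincides with the projection onto $\LOMns$ when applied to a symmetric field) -- but these are refinements of the same argument, not a different method.
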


\begin{proof}
  Let $a\in\partial\Upsilon_{n}^{\delta}(z_{0})\subset\LOMns$ and let
  $id$ be the identity on $\LOMs$. For arbitrary $\varphi\in \LOMs$ we
  set $\varphi_{n}:=P_{n}\varphi$ and
  $\varphi_{o}:=(id-P_{n})\varphi$. We obtain
  \begin{align*}
    \int_{\Omega}\Psi^{\delta}(z_{0}+t\varphi)
    &=\Upsilon^{\delta}\left(z_{0}+t\varphi_{n}+t\varphi_{o}\right)
    \geq\Upsilon_{n}^{\delta}\left(z_{0}+t\varphi_{n}\right)
    +t\left\langle \partial\Psi^{\delta}\left(z_{0}+t\varphi_{n}\right),\varphi_{o}\right\rangle _{\Omega}\\
    & \geq\Upsilon_{n}^{\delta}\left(z_{0}\right)+t\left\langle
      a,\varphi_{n}\right\rangle
    _{\Omega}+t\left\langle \partial\Psi^{\delta}\left(z_{0}+t\varphi_{n}\right),\varphi_{o}\right\rangle
    _{\Omega}
  \end{align*}
  Since $\Psi^{\delta}$ is differentiable and $\partial\Psi^{\delta}$
  is Lipschitz continuous, we obtain from the fact that the
  subdifferential coincides with the derivative and from the last
  inequality
  \begin{align*}
    \left\langle \partial\Psi^{\delta}(z_{0}),\varphi\right\rangle
    _{\Omega} &
    =\lim_{t\rightarrow0}\frac{1}{t}\left(\int_{\Omega}\Psi^{\delta}(z_{0}+t\varphi)
      -\int_{\Omega}\Psi^{\delta}(z_{0})\right)\geq\left\langle
      a,\varphi_{n}\right\rangle
    _{\Omega}+\left\langle \partial\Psi^{\delta}\left(z_{0}\right),\varphi_{o}\right\rangle
    _{\Omega}\,.
  \end{align*}
  Replacing $\varphi$ by $-\varphi$ in the above calculations, we
  obtain
  $\partial\Psi^\delta(z_0)=a+(id-P_{n})\partial\Psi^{\delta}(z_{0})$
  or $P_{n} \partial\Psi^{\delta}(z_{0}) = a$.
\end{proof}

The Fenchel conjugate of $\Upsilon_{n}^{\delta}$ in $\LOMns$ is \[
\Upsilon_{n}^{\delta\ast}(\sigma):=\sup\left\{ \int_{\Omega}\sigma:e\,
  d\cP-\Upsilon_{n}^{\delta}(e)\,|\, e\in\LOMns\right\} \,.\] Since
$-\Upsilon_{n}^{\delta}(\cdot)$ is coercive in a finite dimensional
space, it has compact sublevels in $\LOMn$, and the supremum is indeed
attained.
\begin{lem}
\label{lem:3-3}\label{lem:weak-cont}Let $\Upsilon^{\delta\ast}$
be the Fenchel conjugate of $\Upsilon^{\delta}.$ For every $p\in\LOMs$
holds\begin{equation}
\Upsilon^{\ast}(p)=\int_{\Omega}\Psi^{\ast}(p)\, d\cP\,,\qquad
\Upsilon^{\delta\ast}(p)=\int_{\Omega}\Psi^{\delta\ast}(p)\, d\cP\,,\label{eq:Xi-Psi-duality-2}\end{equation}
and the functionals $\Upsilon$, $\Upsilon^{\ast}$, $\Upsilon^{\delta}$
and $\Upsilon^{\delta\ast}$ are convex and weakly lower semicontinuous
on $\LOMs$. 
\end{lem}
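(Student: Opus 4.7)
The statement comprises two parts: the duality formulas \eqref{eq:Xi-Psi-duality-2} and the convexity together with weak lower semicontinuity of the four functionals. My plan is to establish the duality formulas first and then deduce the semicontinuity from standard arguments for integrals of convex normal integrands.

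For the duality identity $\Upsilon^{\ast}(p) = \int_{\Omega}\Psi^{\ast}(p)\,d\cP$, the inequality ``$\le$'' is immediate from pointwise Fenchel, $z(\omega):p(\omega) - \Psi(z(\omega),\omega) \le \Psi^{\ast}(p(\omega),\omega)$, integrated and then supremized over $z\in\LOMs$. For the opposite inequality, I would construct a near-maximizing sequence by truncation combined with measurable selection. Specifically, introduce the truncated conjugate
\[
\Psi^{\ast}_{n}(\sigma,\omega):=\sup_{\xi\in\symM,\,|\xi|\le n}\bigl(\xi:\sigma-\Psi(\xi,\omega)\bigr),
\]
and use the Kuratowski--Ryll-Nardzewski selection theorem to obtain a measurable $z_{n}:\Omega\to\symM$ with $|z_{n}(\omega)|\le n$ and $z_{n}(\omega):p(\omega)-\Psi(z_{n}(\omega),\omega)\ge \Psi^{\ast}_{n}(p(\omega),\omega)-\tfrac{1}{n}$. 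The bound $|z_{n}|\le n$ ensures $z_{n}\in L^{\infty}(\Omega;\symM)\subset\LOMs$, so testing with $z_{n}$ in the definition of $\Upsilon^{\ast}$ yields $\Upsilon^{\ast}(p)\ge \int_{\Omega}\Psi^{\ast}_{n}(p)\,d\cP - \tfrac{1}{n}$. Since $\Psi^{\ast}_{n}\nearrow \Psi^{\ast}$ pointwise and $\Psi^{\ast}_{n}\ge 0$ (take $\xi=0$, using $\Psi(0,\omega)=0$), monotone convergence delivers $\Upsilon^{\ast}(p)\ge \int_{\Omega}\Psi^{\ast}(p)\,d\cP$. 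The analogous argument applied to $\Psi^{\delta}$ yields the second formula in \eqref{eq:Xi-Psi-duality-2}.

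For convexity and weak lower semicontinuity, convexity of $\Upsilon$ and $\Upsilon^{\delta}$ is inherited from the pointwise convexity of $\Psi$, $\Psi^{\delta}$ in their first argument. For strong lower semicontinuity on $\LOMs$, given $z_{k}\to z$ in $\LOMs$ I would extract a subsequence converging $\cP$-a.e., use pointwise lower semicontinuity of $\Psi$ and $\Psi^{\delta}$, and invoke Fatou's lemma; the requisite integrable lower bound on the integrand is provided by any affine minorant of the proper convex lower semicontinuous functions $\Psi(\cdot,\omega)$, $\Psi^{\delta}(\cdot,\omega)$. Convexity plus strong lower semicontinuity then implies weak lower semicontinuity by Mazur's lemma. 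The duality formulas reduce the analogous statements for $\Upsilon^{\ast}$ and $\Upsilon^{\delta\ast}$ to the same situation: integrals of the convex lower semicontinuous nonnegative integrands $\Psi^{\ast}(\cdot,\omega)$, $\Psi^{\delta\ast}(\cdot,\omega)$.

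The main technical step is the measurable selection in the duality proof, which requires measurability of the multifunction $\omega\mapsto\{\xi\in\symM:\,|\xi|\le n,\ \xi:p(\omega)-\Psi(\xi,\omega)\ge \Psi^{\ast}_{n}(p(\omega),\omega)-\tfrac{1}{n}\}$. This follows from the joint measurability of $\Psi$ on $\symM\times\Omega$ built into Assumption~\ref{ass:sto-coeffs} and from lower semicontinuity of $\Psi$ in its first argument (so the set is closed); everything else is routine convex analysis.
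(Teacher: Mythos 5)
Your argument is correct in outline, but it takes a genuinely different route from the paper's on both parts of the lemma. For the duality identity \eqref{eq:Xi-Psi-duality-2}, you establish the nontrivial inequality $\Upsilon^{\ast}(p)\geq\int_{\Omega}\Psi^{\ast}(p)\,d\cP$ by truncating the conjugate, applying the Kuratowski--Ryll-Nardzewski selection theorem to pick a bounded measurable near-maximizer $z_n$, and passing to the limit by monotone convergence --- essentially the classical Rockafellar interchange of supremum and integral built from first principles. The paper instead argues abstractly: it picks $\sigma\in\partial\Upsilon^{\ast}(p)$ (asserting $\mathrm{dom}\,\Upsilon^{\ast}=\LOMs$ and nonemptiness of the subdifferential), then invokes the Fenchel equality of Lemma \ref{lemma:convprop}\,(v) once on $\LOMs$ and once pointwise on $\Omega$ via the pointwise characterization of $\partial\Upsilon$, and compares the two identities. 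Your route is longer but more self-contained; the paper's is shorter but leans on claims it does not justify in detail. For the lower semicontinuity, you prove \emph{strong} l.s.c.\ via a.e.-convergent subsequences and Fatou, then upgrade to weak l.s.c.\ by convexity and Mazur's lemma, while the paper proves weak l.s.c.\ of $\Upsilon^{\ast}$ directly: it approximates $\Psi^{\ast}$ monotonically from below by finite maxima of affine functionals, partitions $\Omega$ according to which affine branch attains the maximum at the weak limit, exploits weak continuity of the integrated linear pieces, and concludes by monotone convergence; the statement for $\Upsilon=\Upsilon^{\ast\ast}$ then follows by exchanging the roles of $\Psi$ and $\Psi^{\ast}$. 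One caveat on your Fatou step: an arbitrary affine minorant of $\Psi(\cdot,\omega)$ need not provide a $\cP$-integrable lower bound uniformly in $\omega$. For $\Upsilon^{\ast}$ and $\Upsilon^{\delta\ast}$ this is harmless, since $\Psi(0,\omega)=0$ gives $\Psi^{\ast}\geq 0$ and $\Psi^{\delta\ast}\geq 0$; for $\Upsilon$ and $\Upsilon^{\delta}$ you should either invoke nonnegativity of $\Psi$ (implicit in the plasticity setting though not written into Assumption \ref{ass:sto-coeffs}) or extract an integrable affine minorant with the help of the duality formula you have just proved.
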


\begin{proof}
The functional $\Upsilon$ is convex with the conjugate
\[
\Upsilon^{\ast}(p):=\sup\left\{ \left\langle p,e\right\rangle
  _{\Omega}-\Upsilon(e)\,|\, e\in\LOMs\right\} \qquad\forall
p\in\LOMs\,.\] We first prove \eqref{eq:Xi-Psi-duality-2}: Let $p\in
dom\,\Upsilon^{\ast}=\LOMs$.  Since $\Ups^{\ast}$ is convex, we know
that $\partial\Upsilon^{\ast}(p)\not=\emptyset$.  Lemma
\ref{lemma:convprop} (iv) yields for any
$\sigma\in\partial\Upsilon^{\ast}(p)$ that $\sigma\in dom\,\Upsilon$
with $p\in\partial\Upsilon(\sigma)$ and Lemma \ref{lemma:convprop} (v)
then yields \begin{equation} \Upsilon^{\ast}(p)+\Upsilon(\sigma)=\scp
  p{\sigma}_{\Omega}\,.\label{eq:lem-3-3-help-1}\end{equation} Since
$p\in\partial\Upsilon(\sigma)$, \eqref{eq:def-subdif-Xi} yields
$p(\omega)\in\partial\Psi(\sigma(\omega);\omega)$ for
a.e. $\omega\in\Omega$ and Lemma \ref{lemma:convprop} (v) yields
$\Psi^{\ast}(p)+\Psi(\sigma)=p:\sigma$ a.e.. Integrating the last
equality over $\Omega$ and comparing with \eqref{eq:lem-3-3-help-1},
we find $\Upsilon^{\ast}(p)=\int_{\Omega}\Psi^{\ast}(p)$ since
$\Ups(\sigma)=\int_{\Omega}\Psi(\sigma)$.  The proof for the second
statement in \eqref{eq:Xi-Psi-duality-2} is similar.

We now prove the weak lower semicontinuity of $\Ups^{\ast}$. Let
$\sigma_{i}\in dom(\Psi)$, $i\in\Nn$, be dense in $dom(\Psi)$. We
define $\Psi_{m}^{\ast}$ as the maximum of finitely many functions\[
\Psi_{m}^{\ast}(p):=\max_{i=1,\dots,m}\left\{
  p:\sigma_{i}-\Psi(\sigma_{i})\right\} \qquad\forall p\in\symM\]
and note that $\Psi_m^\ast(p)\leq\Psi^\ast(p)$ for every $p\in\symM$.
For $z\in\LOMs$ and $i=1,\dots,m$, we introduce the sets \[ \Omega_{i}:=\left\{
  \omega\in\Omega\,|\,\Psi_{m}^{\ast}(z)=z:\sigma_{i}-\Psi(\sigma_{i})\right\}\backslash\bigcup_{j<i}\Omega_j
\,.\] 
Let $\left( z_{n}\right) _{n}$ be a sequence such
that $z_{n}\weakto z$ weakly in $\LOMs$. We find that 
\begin{multline*}
  \liminf_{n\rightarrow\infty}\int_{\Omega}\Psi^{\ast}(z_{n})\geq
  \liminf_{n\rightarrow\infty}\sum_{i=1}^m\int_{\Omega}\Psi^{\ast}_m(z_{n})
  =\liminf_{n\rightarrow\infty}\sum_{i}\int_{\Omega_{i}}\max_{j=1,\dots,m}\left(z_{n}:\sigma_{j}-\Psi(\sigma_{j})\right)\\
  \geq\liminf_{n\rightarrow\infty}\sum_{i}\int_{\Omega_{i}}\left(z_{n}:\sigma_{i}-\Psi(\sigma_{i})\right)
  =\sum_{i}\int_{\Omega_{i}}\left(z:\sigma_{i}-\Psi(\sigma_{i})\right)=\int_{\Omega}\Psi_{m}^{\ast}(z)\,.
\end{multline*}
Since $\Psi^{\ast}(p)=\lim_{m\rightarrow\infty}\Psi_{m}^{\ast}(p)$ for
every $p\in\symM$ by definition of $\Psi_{m}^{\ast}$, and since this
convergence is monotone, we can apply the monotone convergence theorem
and get
$\int_{\Omega}\Psi_{m}^{\ast}(z)\rightarrow\int_{\Omega}\Psi^{\ast}(z)
= \Ups^{\ast}(z)$.  This yields the weak lower semicontinuity of
$\Ups^{\ast}$.

Since $\Psi$ is convex and lower semicontinuous, we find $\Psi=\Psi^{\ast\ast}$
and switching $\Psi$ and $\Psi^{\ast}$ in the above argumentation,
the weak lower semicontinuity of $\Ups$ follows. The statements
for $\Ups^{\delta}$ and $\Ups^{\delta\ast}$ follow similarly. 
\end{proof}

\subsubsection*{Convergence properties}

We will later need additional lower semicontinuity properties: We have
to analyze the behavior of, e.g., $\Upsilon^{\delta}(u_{\delta})$.

\begin{lem}[Lower semicontinuity property of $\Psi^{\delta}$ and
  $\Psi^{\delta\ast}$]
  \label{lem:weak-lsc-psi}\label{lem:Ben-Veneroni-Lem-2.6:} Let
  $U_{s}:=\Omega\times(0,s)$ be the space-time cylinder and let
  $(u_{\delta})_\delta$ be a weakly convergent sequence,
  $u_{\delta}\rightharpoonup u$ weakly in $L^{2}(U_{s})$ as
  $\delta\rightarrow0$. Then, for $\Psi^{\delta}$, $\Psi$ as above, we
  find
  \begin{equation}
    \liminf_{\delta\rightarrow0}\int_{U_{s}}\Psi^{\delta\ast}(u_{\delta})\,
    d\cP\, dt\geq\int_{U_{s}}\Psi^{\ast}(u)\, d\cP\,
    dt\,.\label{eq:xi-lower-semicont-Sch-Ven}
  \end{equation} 
  For every sequence $(u_{\delta})_\delta$ with $u_{\delta}\weakto u$
  weakly in $\LOMs$ we find 
  \begin{equation}
    \liminf_{\delta\rightarrow0}\Ups^{\delta}(u_{\delta})\geq\Ups(u)\,.\label{eq:xi-lower-semicont}
  \end{equation}
\end{lem}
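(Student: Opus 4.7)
The plan is to derive both inequalities from the Moreau--Yosida duality identity
\[
\Psi^{\delta\ast}(p,\omega) \;=\; \Psi^{\ast}(p,\omega) \;+\; \tfrac{\delta}{2}\left|p\right|^{2}\,,
\]
valid for every $p\in\symM$ and a.e.\ $\omega\in\Omega$. This follows from \eqref{eq:Yosida} by recognizing $\Psi^{\delta}(\cdot,\omega)$ as the infimal convolution of $\Psi(\cdot,\omega)$ with $\tfrac{1}{2\delta}\left|\cdot\right|^{2}$: substituting $\eta = \sigma-\xi$ in the definition of $\Psi^{\delta\ast}(p,\omega)$ decouples the two suprema and produces the sum of the two conjugates. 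Integrating over $\Omega$ gives the functional version $\Upsilon^{\delta\ast}(p) = \Upsilon^{\ast}(p) + \tfrac{\delta}{2}\norm{p}_{\LOMs}^{2}$.

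For \eqref{eq:xi-lower-semicont-Sch-Ven}, integrating the pointwise identity over $U_{s}$ yields
\[
\int_{U_{s}}\Psi^{\delta\ast}(u_{\delta})\,d\cP\,dt \;=\; \int_{U_{s}}\Psi^{\ast}(u_{\delta})\,d\cP\,dt \;+\; \tfrac{\delta}{2}\norm{u_{\delta}}_{L^{2}(U_{s})}^{2}.
\]
The last term vanishes in the limit $\delta\to 0$, since any weakly convergent sequence is bounded in $L^{2}(U_{s})$. For the first term I would rerun the argument from the proof of Lemma \ref{lem:weak-cont} --- the representation of $\Psi^{\ast}(\cdot,\omega)$ as a countable supremum of affine functions and the associated decomposition into level sets --- word-for-word with $\Omega$ replaced by $U_{s}=\Omega\times(0,s)$ and $d\cP$ by $d\cP\otimes dt$. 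This shows that $u\mapsto\int_{U_{s}}\Psi^{\ast}(u)\,d\cP\,dt$ is weakly lower semicontinuous on $L^{2}(U_{s};\symM)$, so that $\liminf_{\delta\to 0}\int_{U_{s}}\Psi^{\ast}(u_{\delta})\geq\int_{U_{s}}\Psi^{\ast}(u)$, completing \eqref{eq:xi-lower-semicont-Sch-Ven}.

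For \eqref{eq:xi-lower-semicont}, the strategy is Fenchel duality at the level of $\Upsilon$. For any fixed $p\in\LOMs$, Fenchel's inequality (Lemma \ref{lemma:convprop}(iii)) combined with the functional identity above gives
\[
\Upsilon^{\delta}(u_{\delta}) \;\geq\; \left\langle p,u_{\delta}\right\rangle_{\Omega} - \Upsilon^{\delta\ast}(p) \;=\; \left\langle p,u_{\delta}\right\rangle_{\Omega} - \Upsilon^{\ast}(p) - \tfrac{\delta}{2}\norm{p}_{\LOMs}^{2}.
\]
Passing to $\liminf_{\delta\to 0}$ using the weak convergence $u_{\delta}\weakto u$ in $\LOMs$ yields $\liminf\Upsilon^{\delta}(u_{\delta}) \geq \left\langle p,u\right\rangle_{\Omega} - \Upsilon^{\ast}(p)$. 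Taking the supremum over $p\in\LOMs$ and invoking the biconjugate identity $\Upsilon = \Upsilon^{\ast\ast}$ (Lemma \ref{lemma:convprop}(vi), which applies because $\Upsilon$ is convex and weakly lower semicontinuous by Lemma \ref{lem:weak-cont}) gives \eqref{eq:xi-lower-semicont}.

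The main obstacle I anticipate is the careful bookkeeping in part (1): one has to confirm that the dense-set argument underlying the weak lower semicontinuity of $\int_{\Omega}\Psi^{\ast}(\cdot)\,d\cP$ transports cleanly to the product space $U_{s}$. Since $\Psi^{\ast}$ has no $t$-dependence and the $\omega$-dependence was already accommodated in Lemma \ref{lem:weak-cont}, this is essentially mechanical and no genuinely new ingredient is needed; the conceptual content of the lemma lies in the Moreau--Yosida identity combined with Fenchel duality.
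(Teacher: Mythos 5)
Your proof is correct, but it takes a genuinely different route from the paper's. The engine of your argument is the conjugate identity $\Psi^{\delta\ast}(p,\omega)=\Psi^{\ast}(p,\omega)+\tfrac{\delta}{2}|p|^{2}$ coming from infimal-convolution duality; this is valid (no extra hypotheses are needed for $(f\,\square\,g)^{\ast}=f^{\ast}+g^{\ast}$, and $g^{\ast}(p)=\tfrac{\delta}{2}|p|^{2}$ when $g=\tfrac{1}{2\delta}|\cdot|^{2}$), and for \eqref{eq:xi-lower-semicont-Sch-Ven} one can even skip the "vanishing term" step since the identity already gives $\Psi^{\delta\ast}\geq\Psi^{\ast}$ pointwise. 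The paper, by contrast, proves \eqref{eq:xi-lower-semicont-Sch-Ven} by simply citing Lemma~2.6 of \cite{SchweizerVen10}, and proves \eqref{eq:xi-lower-semicont} on the \emph{primal} side: it selects $\delta$-approximate minimizers $\pi_{\delta}$ in the Moreau--Yosida infimum defining $\Psi^{\delta}(u_{\delta})$, shows from the controlled penalty $\int_{\Omega}|\pi_{\delta}-u_{\delta}|^{2}/\delta$ that $\pi_{\delta}\weakto u$ along the relevant subsequence, and concludes from the weak lower semicontinuity of $\Ups$ established in Lemma~\ref{lem:weak-cont}. Your \emph{dual} route instead uses Fenchel's inequality, the conjugate identity, and the biconjugate theorem $\Ups^{\ast\ast}=\Ups$ (legitimate here, since $\Ups$ is convex, proper because $\Psi(0,\cdot)=0$, and lower semicontinuous by Lemma~\ref{lem:weak-cont}). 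Both arguments are sound; yours is more systematic and unifies the two inequalities under a single algebraic identity, while the paper's primal argument for \eqref{eq:xi-lower-semicont} is a bit more elementary in that it avoids biconjugation. The one point you flagged — that the weak lower semicontinuity proof of $\int\Psi^{\ast}$ in Lemma~\ref{lem:weak-cont} carries over verbatim from $\Omega$ to $U_{s}=\Omega\times(0,s)$ — is indeed harmless since $\Psi^{\ast}$ has no explicit $t$-dependence.
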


\begin{proof}
  The proof of \eqref {eq:xi-lower-semicont-Sch-Ven} is the same as in
  \cite{SchweizerVen10}, Lemma 2.6.

  Using the definition of $\Psi^{\delta}$ in \eqref{eq:Yosida}, we
  choose, for every $\delta>0$, a function $\pi_{\delta}\in\LOM$ such
  that
  \[
  \int_{\Omega}\left(\frac{\left|\pi_{\delta}-u_{\delta}\right|^{2}}{\delta}+\Psi(\pi_{\delta})\right)\,
  d\cP\leq\int_{\Omega}\Psi^{\delta}(u_{\delta})\, d\cP+\delta\,.\]
  Without loss of generality, we may assume
  $\liminf_{\delta\rightarrow0}\int_{\Omega}\Psi^{\delta}(u_{\delta})\,
  d\cP<\infty$. Then we get for a subsequence
  $\int_{\Omega}\left|\pi_{\delta}-u_{\delta}\right|^{2}\rightarrow0$
  as $\delta\rightarrow0$ and hence $\pi_{\delta}\weakto u$ weakly in
  $\LOM$ for this subsequence. Since
  $\int_{\Omega}\left|\pi_{\delta}-u_{\delta}\right|^{2}$ is positive
  and $\Ups(z)=\int_{\Omega}\Psi(z)$ is weakly lower semicontinuous,
  we find \eqref{eq:xi-lower-semicont}.
\end{proof}

The following lemma uses time-dependent functions and the
discretization parameter $n\in \Nn$.

\begin{lem}
  \label{lem:Ben-Veneroni-generalized}Let $s>0$ and let $p\in
  L^{2}(0,s;\LOMs)$ and $p_{n}\in L^{2}(0,s;\LOMn)$ such that
  $p_{n}\rightharpoonup p$ weakly in $L^{2}(0,s;\LOM)$ as
  $n\rightarrow\infty$. Then, for $\Upsilon_{n}^{\delta\ast}$ and
  $\Upsilon_{n}^{\delta}$ as above we find 
  \begin{equation}
    \liminf_{n\rightarrow\infty}\int_{0}^{s}\Upsilon_{n}^{\delta}(p_{n})\,
    dt\geq\int_{0}^{s}\Upsilon^{\delta}(p)\,
    dt\,,\qquad\liminf_{n\rightarrow\infty}\int_{0}^{s}\Upsilon_{n}^{\delta\ast}(p_{n})\,
    dt\geq\int_{0}^{s}\Upsilon^{\delta\ast}(p)\,
    dt\,.\label{eq:semi-cont-Xi}
  \end{equation} 
  Furthermore, if $z_{n}\to z$ strongly in $\LOM$ as
  $n\rightarrow\infty$, then 
  \begin{equation}
    \lim_{n\rightarrow\infty}\Upsilon_{n}^{\delta}(z_{n})=\Upsilon^{\delta}(z)\,.
    \label{eq:semi-cont-Xi-1}
  \end{equation}
\end{lem}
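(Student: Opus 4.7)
The two liminf inequalities in \eqref{eq:semi-cont-Xi} have quite different character. Since $\Upsilon_n^\delta$ is, by definition, the restriction of $\Upsilon^\delta$ to $\LOMns$, one has the identity $\Upsilon_n^\delta(e) = \Upsilon^\delta(e)$ for every $e \in \LOMns$. For the Fenchel conjugates the situation is different: since $\Upsilon_n^{\delta\ast}(\sigma) = \sup_{e\in\LOMns}\{\scp{\sigma}{e}_{\Omega} - \Upsilon^\delta(e)\}$ runs only over the subspace $\LOMns$, the same formula defines a functional on all of $\LOMs$ satisfying $\Upsilon_n^{\delta\ast} \leq \Upsilon_{n+1}^{\delta\ast} \leq \Upsilon^{\delta\ast}$. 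As $\bigcup_n \LOMns$ is dense in $\LOMs$ by construction of the orthonormal bases, and as $e\mapsto\scp{\sigma}{e}_\Omega - \Upsilon^\delta(e)$ is continuous on $\LOMs$ (because $\partial\Psi^\delta$ is globally Lipschitz, hence $\Upsilon^\delta$ is continuous on bounded sets), one obtains pointwise monotone convergence $\Upsilon_n^{\delta\ast}(\sigma) \nearrow \Upsilon^{\delta\ast}(\sigma)$ for every $\sigma\in\LOMs$.

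With these preparations, the first inequality in \eqref{eq:semi-cont-Xi} reduces to the weak lower semicontinuity of $J(u):=\int_0^s \Upsilon^\delta(u(t))\,dt$ on $L^2(0,s;\LOMs)$. For this I would first establish strong lower semicontinuity: any strongly convergent sequence admits a subsequence converging pointwise a.e.\ in $[0,s]$, so Fatou combined with the continuity of $\Upsilon^\delta$ does the job. Since $J$ is convex, Mazur's lemma upgrades strong to weak lower semicontinuity. Applied to $p_n \weakto p$ this yields $\liminf_n \int_0^s \Upsilon_n^\delta(p_n)\,dt = \liminf_n J(p_n) \geq J(p)$.

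The second inequality is more delicate because $\Upsilon_n^{\delta\ast}\leq \Upsilon^{\delta\ast}$ prevents a one-step lower semicontinuity argument against the outer functional. The remedy is a monotone approximation from below: for $n\geq m$ one has $\Upsilon_n^{\delta\ast}(\sigma)\geq \Upsilon_m^{\delta\ast}(\sigma)$ for every $\sigma\in\LOMs$, and $\Upsilon_m^{\delta\ast}$ is convex and lower semicontinuous as a Fenchel conjugate. The same reduction as in the previous paragraph shows that $\sigma\mapsto \int_0^s \Upsilon_m^{\delta\ast}(\sigma(t))\,dt$ is weakly lower semicontinuous on $L^2(0,s;\LOMs)$. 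Hence for every fixed $m$,
\[ \liminf_{n\to\infty} \int_0^s \Upsilon_n^{\delta\ast}(p_n)\,dt \;\geq\; \liminf_{n\to\infty} \int_0^s \Upsilon_m^{\delta\ast}(p_n)\,dt \;\geq\; \int_0^s \Upsilon_m^{\delta\ast}(p)\,dt, \]
and letting $m\to\infty$ together with monotone convergence $\Upsilon_m^{\delta\ast}\nearrow \Upsilon^{\delta\ast}$ closes the argument.

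Finally, \eqref{eq:semi-cont-Xi-1} follows from the Lipschitz continuity of $\Upsilon^\delta$ on bounded subsets of $\LOMs$, inherited from the global Lipschitz property of $\partial\Psi^\delta$ in \eqref{eq:prop-psi-delta}: strong convergence $z_n\to z$ gives $\Upsilon^\delta(z_n)\to \Upsilon^\delta(z)$ directly, and the restriction identity $\Upsilon_n^\delta(z_n) = \Upsilon^\delta(z_n)$ concludes. The main obstacle throughout is the ``wrong-sign'' monotonicity $\Upsilon_n^{\delta\ast}\leq \Upsilon^{\delta\ast}$, which forces the monotone-approximation detour in paragraph three rather than a direct appeal to lower semicontinuity.
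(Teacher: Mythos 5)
Your proof is correct, and it takes a genuinely different route from the paper's. Both arguments rest on the same underlying structural fact — namely that the conjugate $\Upsilon_n^{\delta\ast}$ is smaller than $\Upsilon^{\delta\ast}$ (the sup runs over a smaller set) and increases to it as $n\to\infty$, so the second liminf cannot be proved by a one-step appeal to lower semicontinuity against the limiting functional — but the mechanisms for passing to the weak limit differ substantially. You factor through the intermediate conjugates $\Upsilon_m^{\delta\ast}$, which you observe are convex, lower semicontinuous, nonnegative and (because they only see the finite-dimensional subspace $\LOMns$ through the orthogonal projection) finite-valued; this lets you invoke the standard abstract route to weak lower semicontinuity of $\sigma\mapsto\int_0^s\Upsilon_m^{\delta\ast}(\sigma)\,dt$, namely strong lower semicontinuity via Fatou along a pointwise-a.e.\ convergent subsequence, followed by convexity and Mazur's lemma, and then you close the gap with monotone convergence $\Upsilon_m^{\delta\ast}\nearrow\Upsilon^{\delta\ast}$. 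The paper instead carries out a hands-on argument: it picks an increasing dense family of \emph{finite} subsets $B_N\subset\LOM$, lower-bounds $\Upsilon_n^{\delta\ast}(p_n(t))$ by a finite max over $B_N$, partitions the time interval into pieces $\bbT_N^i$ on which a single maximizer is selected, passes to the weak limit term by term (each term is affine in $p_n$), reassembles the max via the definition of $\bbT_N^i$, and lets $N\to\infty$ by density. In effect the paper's finite max is a smaller approximant-from-below than your $\Upsilon_m^{\delta\ast}$, and the time-partitioning replaces your appeal to Mazur. Your first inequality argument (using $\Upsilon_n^\delta=\Upsilon^\delta$ on $\LOMns$ and then Fatou plus Mazur for the time-integrated $\Upsilon^\delta$) is likewise cleaner than the paper's reference back to the $\Omega$-partitioning scheme of Lemma \ref{lem:weak-cont}. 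The trade is clear: your route is shorter and more modular at the cost of invoking Mazur; the paper's is fully self-contained and mirrors its earlier proof almost verbatim. Both are sound.
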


\begin{proof}
  Let $z_{n}\to z$ strongly in $\LOM$. Since $\Psi^{\delta}$ is
  Lipschitz continuous with $\Psi^{\delta}(0)=0$, we find because of
  $\Ups^\delta_n(z_n)=\Ups^\delta(z_n)$ \[
  \lim_{n\rightarrow\infty}\Upsilon_{n}^{\delta}(z_{n})
  =\lim_{n\rightarrow\infty}\int_{\Omega}\Psi^{\delta}(z_{n})=\int_{\Omega}\Psi^{\delta}(z)\]
  and thus \eqref{eq:semi-cont-Xi-1}. For $p_{n}\weakto p$ weakly in
  $L^{2}(U_{s})$ with $p_{n}\in L^{2}(0,s;\LOMn)$, the first
  inequality in \eqref{eq:semi-cont-Xi} can be proved similarly to the
  weak lower semicontinuity results of Lemma \ref{lem:weak-cont},
  using $\Upsilon_{n}^\delta(p_{n})=\Upsilon^\delta(p_{n})$.

  For the second inequality in \eqref{eq:semi-cont-Xi}, we choose
  finite sets $B_{n}=\left\{ e_{n}^{i}\,|\,i=1,\dots,K_{n}\right\}
  \subset\LOMn$ with $K_{n}\geq n$ such that $B_{n}\subset B_{n+1}$
  and $\bigcup_{n}B_{n}$ is dense in $\LOM$. For fixed $N\in\Nn$, the
  interval $[0,s]$ is split into subsets 
  \begin{equation}
    \tilde{\bbT}_{N}^{i}:=\left\{ t\in[0,s]\ |\ \max\left\{
        \left\langle e,p(t)\right\rangle
        _{\Omega}-\Upsilon^{\delta}(e)\,|\, e\in B_{N}\right\}
      =\left\langle e_{N}^{i},p(t)\right\rangle
      _{\Omega}-\Upsilon^{\delta}(e_{N}^{i})\right\} 
    \label{eq:splitting-T-Lemma-3-7}
  \end{equation}
  and we set $\bbT_N^1:=\tilde{\bbT}_N^1$ and
  $\bbT_N^i:=\tilde{\bbT}_N^i\backslash\bigcup_{j<i}\bbT_N^j$ for
  $i=2,\dots,K_N$.  For $n\geq N$ we find, decomposing the time
  integral, taking the maximum, performing the weak limit, and using
  the definition of $\bbT_N^i$:
  \begin{align*}
    \liminf_{n\rightarrow\infty}\int_{0}^{s}\Upsilon_{n}^{\delta\ast}(p_{n})
    &
    \geq\liminf_{n\rightarrow\infty}\sum_{i=1}^{K_{N}}\int_{\bbT_{N}^{i}}
    \max\left\{ \left\langle e,p_{n}(t)\right\rangle _{\Omega}
      -\Upsilon_{n}^{\delta}(e)\,|\, e\in B_{N}\right\} dt\\
    & \geq\liminf_{n\rightarrow\infty}\sum_{i=1}^{K_{N}}
    \int_{\bbT_{N}^{i}}\left(\left\langle
        e_{N}^{i},p_{n}(t)\right\rangle _{\Omega}
      -\Upsilon_{n}^{\delta}(e_{N}^{i})\right)dt\\
    & =\sum_{i=1}^{K_{N}}\int_{\bbT_{N}^{i}}
    \left(\left\langle e_{N}^{i},p(t)\right\rangle _{\Omega}-\Upsilon^{\delta}(e_{N}^{i})\right)dt\\
    & \stackrel{{\scriptstyle
        \eqref{eq:splitting-T-Lemma-3-7}}}{=}\sum_{i=1}^{K_{N}}\int_{\bbT_{N}^{i}}
    \max\left\{ \left\langle e,p(t)\right\rangle
      _{\Omega}-\Upsilon^{\delta}(e)\,|\, e\in
      B_{N}\right\} \, dt\\
    &= \sup\left\{ \int_{0}^{s}\left(\left\langle
          \tilde{e},p(t)\right\rangle
        _{\Omega}-\Upsilon^{\delta}(\tilde{e}(t))\right)dt\,|\,\tilde{e}\in
      L^{2}(0,s;B_{N})\right\} \,.
  \end{align*}
  This inequality implies, due to density of $\bigcup_{N} B_{N}$ in $\LOM$,
  \begin{align*}
    \liminf_{n\rightarrow\infty}\int_{0}^{s}\Upsilon_{n}^{\delta\ast}(p_{n})
    &\geq\sup\left\{ \int_{0}^{s}\int_{\Omega}\left(e:p-\Psi^{\delta}(e)\right)\,|\, 
      e\in L^{2}(0,s;\LOM)\right\} \\
    &=\int_{0}^{s}\int_{\Omega}\Psi^{\delta\ast}(p)=\int_0^s\Upsilon^{\delta\ast}(p)\,,
  \end{align*}
  where we used \eqref{eq:Xi-Psi-duality-2} in the last equality. We
  have thus verified the second inequality of \eqref{eq:semi-cont-Xi}.
\end{proof}

\subsection{Properties of $\cV^2_{pot}(\Omega)$-functions}
\label{sub:Korn-inequality}

\begin{lem}[Potentials with small norm]
  \label{lem:existence-primitive-v-omega}Let $U\subset\Rn$ be a
  bounded Lipschitz-domain and let $\v\in\cV_{pot}^{2}(\Omega)$. Then,
  for $\cP$-a.e.  $\omega\in\Omega$ and every $\eps>0$ there exists
  $\phi_{\eps,\omega,\v}\in H^{1}(U;\Rn)$ such that
  $\nabla\phi_{\eps,\omega, \v}(x)=\v(\tau_{\frac{x}{\eps}}\omega)$
  and such that
  \[
  \lim_{\eps\to0}\norm{\phi_{\eps,\omega,\v}}_{L^{2}(U)} = 0\,.
  \]
\end{lem}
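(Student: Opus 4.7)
The plan is to produce $\phi_{\eps,\omega,\v}$ by rescaling a global potential of the $\omega$-realization of $\v$ and subtracting its mean over $U$; smallness in $L^{2}(U)$ will follow from Rellich compactness combined with the vanishing expectation of $\v$.

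First I construct a global potential. Since $\v\in\cV_{pot}^{2}(\Omega)\subset L_{pot}^{2}(\Omega)$, for $\cP$-a.e. $\omega\in\Omega$ the realization $v_{\omega}(y):=\v(\tau_{y}\omega)$ lies in $L_{pot,loc}^{2}(\Rn)$. Exhausting $\Rn$ by an increasing sequence of balls and matching the local potentials by additive constants on overlaps, I obtain a single $\phi_{\omega}\in H_{loc}^{1}(\Rn)$ with $\nabla\phi_{\omega}=v_{\omega}$ almost everywhere. Setting $\tilde{\phi}_{\eps,\omega}(x):=\eps\,\phi_{\omega}(x/\eps)$ gives $\nabla\tilde{\phi}_{\eps,\omega}(x)=v_{\omega}(x/\eps)=\v(\tau_{x/\eps}\omega)$ for $x\in U$, and the candidate is
\[
\phi_{\eps,\omega,\v}(x):=\tilde{\phi}_{\eps,\omega}(x)-\fint_{U}\tilde{\phi}_{\eps,\omega}(y)\,dy,
\]
which preserves the gradient identity required by the lemma and has zero mean over $U$.

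Next I establish a uniform $H^{1}(U)$-bound. The ergodic theorem (Theorem \ref{thm:ergodic-thm}) applied to $|\v|^{2}\in L^{1}(\Omega)$ shows that $\int_{U}|\v(\tau_{x/\eps}\omega)|^{2}\,dx\to|U|\int_{\Omega}|\v|^{2}\,d\cP$ for a.e. $\omega$, so the $L^{2}(U)$-norms of $\nabla\phi_{\eps,\omega,\v}$ stay bounded as $\eps\to 0$. Poincaré's inequality on the bounded Lipschitz domain $U$ (with constant depending only on $U$) then yields $\norm{\phi_{\eps,\omega,\v}}_{H^{1}(U)}\leq C(U)$ uniformly in $\eps$.

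Finally I identify the $L^{2}$-limit as zero. By Rellich-Kondrachov any sequence $\eps_{k}\to 0$ has a subsequence along which $\phi_{\eps_{k},\omega,\v}\to\phi_{0}$ strongly in $L^{2}(U)$ and weakly in $H^{1}(U)$. The ergodic theorem again yields $\nabla\phi_{\eps_{k},\omega,\v}=\v(\tau_{\cdot/\eps_{k}}\omega)\weakto\int_{\Omega}\v\,d\cP=0$ weakly in $L^{2}(U;\Rdd)$, where the vanishing expectation is exactly the defining property of $\cV_{pot}^{2}(\Omega)$. Hence $\nabla\phi_{0}=0$, so $\phi_{0}$ is constant on the connected Lipschitz domain $U$, and combined with $\fint_{U}\phi_{0}=0$ this forces $\phi_{0}\equiv 0$. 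Uniqueness of the limit upgrades subsequence convergence to convergence of the entire family. The main subtlety is that we need the zero-mean property of $\cV_{pot}^{2}$ (as opposed to merely $L_{pot}^{2}$): without it the weak $L^{2}$-limit of the rescaled gradients would not vanish and the argument would only produce an $H^{1}$-bound rather than the required $L^{2}$-smallness.
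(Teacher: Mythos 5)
Your proof is correct and follows the same overall strategy as the paper: construct a potential of the rescaled realization, normalize to zero mean, obtain a uniform $H^{1}(U)$ bound via the ergodic theorem plus Poincar\'e, invoke Rellich compactness, and conclude by identifying the limit as zero. The one place you genuinely differ is the identification step. The paper tests $\phi_{\eps,\omega,\v}$ against $f\in C^{\infty}_{c}(U)$ by solving a Neumann problem $-\Delta F=f$ and integrating by parts to reduce matters to the weak convergence of $\nabla\phi_{\eps,\omega,\v}=\v(\tau_{\cdot/\eps}\omega)$ to its mean, which vanishes. You instead observe directly that any strong $L^{2}$ sublimit $\phi_{0}$ must satisfy $\nabla\phi_{0}=0$ in the distributional sense (again because the rescaled gradients converge weakly to $\int_{\Omega}\v\,d\cP=0$), hence $\phi_{0}$ is constant on the connected domain $U$ and therefore vanishes by the zero-mean normalization. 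Your variant is arguably cleaner: it avoids the Neumann solvability/compatibility condition $\int_{U}f=0$ that the paper's argument implicitly requires, and it sidesteps a sign bookkeeping issue in the paper's display. The only minor point you pass over (and which the paper spells out) is that the weak convergence $\v(\tau_{\cdot/\eps}\omega)\weakto 0$ must hold on a single set of full measure, which requires testing against a countable dense family of test functions before intersecting the exceptional null sets; this is routine and does not affect the correctness of your argument.
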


\begin{proof}
  Let $\v\in\cV_{pot}^{2}(\Omega)$ and write
  $\v_{\eps,\omega}(x):=\v(\tau_\frac{x}{\eps} \omega)$.  By the
  ergodic theorem \ref{thm:ergodic-thm}, there exists
  $\Omega_\v\subset\Omega$ with $\cP(\Omega_\v)=1$ such that for all
  $\omega\in\Omega_\v$ there exists $C_\omega>0$ with
  \begin{equation}\label{eq:limit-step-functions-help-a}
    \sup_{\eps>0}\|\v_{\eps,\omega}\|_{L^2(U)}\leq C_\omega\,.
  \end{equation}
  Let $\left(\varphi_i\right)_{i\in\Nn}\subset L^2(\Omega;\Rdd)$ a
  countably dense family. For every $i\in\Nn$ there exists
  $\Omega_i\subset\Omega$ with $\cP(\Omega_i)=1$ such that for every
  $\omega\in\Omega_i$
  \begin{equation}\label{eq:limit-step-functions-help}
    \int_U \v_{\eps,\omega}(x)\varphi_i(x)\,dx\to\int_U
    \left(\int_\Omega \v\,d\cP\right)\varphi_i \,dx=0\qquad\text{as }\eps\to0\,.
  \end{equation}
  We define $\tilde\Omega:=\Omega_\v\cup\bigcup_{i\in\Nn}\Omega_i$.
  By \eqref{eq:limit-step-functions-help-a} and
  \eqref{eq:limit-step-functions-help} we obtain that
  $\v_{\eps,\omega}(x)\weakto0$ as $\eps\to0$ for all
  $\omega\in\tilde\Omega$.

  By the definition of $L^2_{pot}(\Omega)$ in \eqref{def:L2pot}, there
  exists $\phi_{\eps,\omega,\v}\in H^{1}(U)$ such that
  $\nabla\phi_{\eps,\omega, \v}(x)=\v(\tau_{\frac{x}{\eps}}\omega)$.
  By adding a constant, we can achieve
  $\int_{U}\phi_{\eps,\omega,\v}=0$.  By the Poincar\'e inequality, it
  follows that
\[
\norm{\phi_{\eps,\omega,\v}}_{L^{2}(U)}\leq\norm{\nabla\phi_{\eps,\omega,
    \v}(x)}_{L^{2}(U)}+\left|\int_{U}\phi_{\eps,\omega,\v}\right|=\norm{\nabla\phi_{\eps,\omega,
    \v}(x)}_{L^{2}(U)}=\int_U |\v_{\eps,\omega}(x)|^2\,.
\] 
Since the family $\phi_{\eps,\omega,\v}$ is bounded in $H^1(U)$, it is
precompact in $L^2(U)$.  We chose $f\in C^\infty_{c}(U;\Rn)$ and
denote by $F$ the solution to the Neumann boundary problem $-\Delta
F=f$. We obtain
\begin{equation*}
  -\lim_{\eps\to0}\int_{U}\phi_{\eps,\omega,\v}\cdot f  =\lim_{\eps\to0}\int_{U}\nabla\phi_{\eps,\omega,\v}:\nabla F=\lim_{\eps\to0}\int_{U}\v(\tau_{\frac{x}{\eps}}\omega):\nabla F(x)\,dx=0\,.
\end{equation*}
Therefore, $\phi_{\eps,\omega,\v}\weakto0$ in $L^{2}(U)$. Since
$\left(\phi_{\eps,\omega,\v}\right)_{\eps>0}$ is precompact in
$L^{2}(U)$, it follows that $\phi_{\eps,\omega,\v}\to0$ in $L^{2}(U)$.
\end{proof}

\begin{lem}[A Korn's inequality on $\Omega$]
  \label{lem:Korn-Omega} For every $f\in \cV_{pot}^{2}(\Omega)$ holds
  \begin{equation}
    \norm{f}_{\LOM}\leq2\norm{f^{s}}_{\LOM}\,.\label{eq:Korn-Omega}
  \end{equation}
\end{lem}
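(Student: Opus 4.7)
My approach is to decompose $f = f^s + f^a$ into its symmetric and antisymmetric parts with $f^a := (f-f^T)/2$, so that the triangle inequality $\norm{f}_{\LOM} \leq \norm{f^s}_{\LOM} + \norm{f^a}_{\LOM}$ reduces \eqref{eq:Korn-Omega} to establishing $\norm{f^a}_{\LOM} \leq \norm{f^s}_{\LOM}$. The strategy is to transfer this inequality to $\Rd$ via the realizations of $f$, invoke the classical pointwise Korn identity for potential fields on $\Rd$, and pass to the limit using the ergodic theorem together with the smallness of the potential provided by Lemma \ref{lem:existence-primitive-v-omega}.

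In more detail: I fix a bounded Lipschitz domain $U \subset \Rd$ (say the unit ball), two nested subdomains $U_0 \subset\subset U_1 \subset\subset U$, and a cutoff $\chi \in C_c^\infty(U_1)$ with $\chi \equiv 1$ on $U_0$. For $\cP$-a.e.\ $\omega$, Lemma \ref{lem:existence-primitive-v-omega} applied to $f \in \cV_{pot}^{2}(\Omega)$ furnishes potentials $\phi_\eps := \phi_{\eps,\omega,f} \in H^1(U;\Rd)$ with $\nabla \phi_\eps(x) = f(\tau_{x/\eps}\omega)$ and $\norm{\phi_\eps}_{L^2(U)} \to 0$ as $\eps \to 0$; in particular $\nabla^s \phi_\eps(x) = f^s(\tau_{x/\eps}\omega)$ and $\tfrac12(\nabla\phi_\eps - \nabla\phi_\eps^T)(x) = f^a(\tau_{x/\eps}\omega)$.

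The core ingredient on $\Rd$ is the Korn identity: a direct integration by parts gives, for every $\psi \in H^1_c(\Rd;\Rd)$,
\[
\int_{\Rd} |\nabla^s \psi|^2\,dx - \int_{\Rd} \bigl|\tfrac12(\nabla\psi - \nabla\psi^T)\bigr|^2\,dx = \int_{\Rd} (\diver \psi)^2\,dx \geq 0\,,
\]
hence $\|\tfrac12(\nabla\psi - \nabla\psi^T)\|_{L^2(\Rd)} \leq \norm{\nabla^s \psi}_{L^2(\Rd)}$. Applying this to $\psi_\eps := \chi \phi_\eps$, expanding $\nabla \psi_\eps = \chi\, \nabla\phi_\eps + \phi_\eps \otimes \nabla\chi$, estimating the cutoff remainders in $L^2$ by $C_\chi \norm{\phi_\eps}_{L^2(U)}$ with $C_\chi$ depending on $\norm{\nabla\chi}_\infty$, and using $\chi \equiv 1$ on $U_0$ yields
\[
\norm{f^a(\tau_{\cdot/\eps}\omega)}_{L^2(U_0)} \leq \norm{f^s(\tau_{\cdot/\eps}\omega)}_{L^2(U_1)} + C_\chi \norm{\phi_\eps}_{L^2(U)}\,.
\]

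To conclude, I let $\eps \to 0$. By the ergodic theorem (Theorem \ref{thm:ergodic-thm}) applied to $|f^s|^2,\,|f^a|^2 \in L^1(\Omega)$, the two integrals on the left and right converge, for a.e.\ $\omega$, to $|U_0|\,\norm{f^a}_{\LOM}^2$ and $|U_1|\,\norm{f^s}_{\LOM}^2$ respectively, while the error term vanishes by Lemma \ref{lem:existence-primitive-v-omega}. This produces $|U_0|^{1/2}\,\norm{f^a}_{\LOM} \leq |U_1|^{1/2}\,\norm{f^s}_{\LOM}$, and choosing $U_0 \subset\subset U_1$ as concentric balls whose radii tend to a common value yields $\norm{f^a}_{\LOM} \leq \norm{f^s}_{\LOM}$, hence \eqref{eq:Korn-Omega}. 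The main obstacle in this scheme is the cutoff error: its control hinges precisely on the vanishing $\norm{\phi_\eps}_{L^2(U)} \to 0$ from Lemma \ref{lem:existence-primitive-v-omega}, which in turn depends on the mean-zero hypothesis built into $\cV^{2}_{pot}(\Omega)$.
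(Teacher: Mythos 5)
Your proof is correct and takes essentially the same route as the paper's: it transfers the inequality to $\Rd$ via realizations and the potentials of Lemma~\ref{lem:existence-primitive-v-omega}, applies a compactly supported Korn-type inequality after a cutoff, controls the cutoff remainders by the vanishing $L^2$-norm of the potential, and passes to the limit with the ergodic theorem, finally letting the cutoff domains shrink to a common set. The only cosmetic differences are that you first decompose $f=f^s+f^a$ and use the sharp identity $\|\nabla^a\psi\|_{L^2}\le\|\nabla^s\psi\|_{L^2}$ via $\int(\diver\psi)^2$, whereas the paper applies $\|\nabla\psi\|^2_{L^2}\le 2\|\nabla^s\psi\|^2_{L^2}$ directly and uses nested cubes $Q_\eta\subset Q$ instead of your nested balls.
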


\begin{proof}
  In what follows, we denote $Q:=(-1,1)^d$ and
  $Q_\eta:=(-1+\eta,1-\eta)^d$ for $\frac12>\eta>0$.  We choose
  $\psi_\eta\in C^\infty_c(Q)$ with $0\leq\psi_\eta\leq1$,
  $\psi_\eta\equiv1$ on $Q_\eta$ and $|\nabla\psi_\eta|<2\eta^{-1}$.
	
  Let $f\in \cV^{2}_{pot}(\Omega)$ and for every $\eps>0$ and
  $\omega\in\Omega$ let $\phi_{\eps,\omega,f}$ denote the potential of
  $f_\omega$ from Lemma \ref{lem:existence-primitive-v-omega}.  If we
  denote the characteristic function of $Q\backslash Q_\eta$ by
  $\chi_{Q\backslash Q_\eta}$, we have the pointwise inequality
  $$\left|\,\left|\nabla\phi_{\eps,\omega,f}\right|^2 
    - \left|\nabla\left(\phi_{\eps,\omega,f}\psi_\eta\right)\right|^2
  \right| \leq \chi_{Q\backslash Q_\eta} \left(
    2\left|\nabla\phi_{\eps,\omega,f}\right|^2 + \frac4\eta
    \left|\phi_{\eps,\omega,f}\right|\,\left|\nabla\phi_{\eps,\omega,f}\right|
    + \frac{4}{\eta^2}\left|\phi_{\eps,\omega,f}\right|^2\right)
  $$
  Using this inequality, we get from the ergodic theorem
  \ref{thm:ergodic-thm} and Lemma
  \ref{lem:existence-primitive-v-omega} for
  $\cP$-a.e. $\omega\in\Omega$
  \begin{align}
    &\lim_{\eps\to0}\int_{Q}\left|\,\left|\nabla\phi_{\eps,\omega,f}\right|^2
      - \left|\nabla\left(\phi_{\eps,\omega,f}\psi_\eta\right)\right|^2 \right|\nonumber\\
    &\qquad\qquad \leq \lim_{\eps\to0}\int_{Q\backslash
      Q_\eta}2\left|\nabla\phi_{\eps,\omega,f}\right|^2
    +\frac{4}{\eta}\lim_{\eps\to0}\|\phi_{\eps,\omega,f}\|_{L^2(Q)}\|\nabla\phi_{\eps,\omega,f}\|_{L^2(Q)}
    +\frac{4}{\eta^2}\lim_{\eps\to0}\|\phi_{\eps,\omega,f}\|_{L^2(Q)}^2\nonumber\\
    &\qquad\qquad =2|Q\backslash Q_\eta|\int_\Omega
    f^2\,d\cP\,,\label{eq:proof-korn1}
  \end{align}
  where we have used that $\phi_{\eps,\omega,f}\to0$ strongly in
  $L^2(Q)$.  Arguing along the same limes with symmetrized functions,
  we can show that
  \begin{equation}
    \lim_{\eps\to0}\int_{Q}\left|\,\left|\nabla^s\phi_{\eps,\omega,f}\right|^2 
      - \left|\nabla^s\left(\phi_{\eps,\omega,f}\psi_\eta\right)\right|^2 \right|
    \leq 2|Q\backslash Q_\eta|\int_\Omega (f^s)^2\,d\cP\,,\label{eq:proof-korn2}
  \end{equation}
  Since $(\phi_{\eps,\omega,f}\psi_\eta)\in H^1_0(Q)$, we can apply
  Korn's inequality in $\Rn$ and obtain
  \begin{equation}
    \int_Q\left|\nabla\left(\phi_{\eps,\omega,f}\psi_\eta\right)\right|^2\leq 
    2 \int_Q\left|\nabla^s\left(\phi_{\eps,\omega,f}\psi_\eta\right)\right|^2\,.\label{eq:proof-korn3}
  \end{equation}
  Combining \eqref{eq:proof-korn1}--\eqref{eq:proof-korn3} with the
  ergodic theorem \ref{thm:ergodic-thm}, we obtain that
  \begin{align*}
    & |Q|\,\int_{\Omega}\left|f\right|^{2}d\cP\stackrel{{\scriptstyle \ref{thm:ergodic-thm}}}{=}\lim_{\eps\to0}\int_{Q}\left(f(\tau_{\frac x\eps}\omega)\right)^2\, dx=\lim_{\eps\to0}\int_{Q}\left|\nabla\phi_{\eps,\omega,f}\right|^2\, dx\\
    &\qquad\stackrel{{\scriptstyle \eqref{eq:proof-korn1}}}{\leq} \lim_{\eps\to0}\int_Q\left|\nabla\left(\phi_{\eps,\omega,f}\psi_\eta\right)\right|^2+ 2|Q\backslash Q_\eta|\int_\Omega f^2\,d\cP\displaybreak[0]\\
    &\qquad\stackrel{{\scriptstyle \eqref{eq:proof-korn3}}}{\leq} \lim_{\eps\to0}2\int_Q\left|\nabla^s\left(\phi_{\eps,\omega,f}\psi_\eta\right)\right|^2+ 2|Q\backslash Q_\eta|\int_\Omega f^2\,d\cP\displaybreak[0]\\
    &\qquad\stackrel{{\scriptstyle \eqref{eq:proof-korn2}}}{\leq} \lim_{\eps\to0}2\int_Q\left|\nabla^s\phi_{\eps,\omega,f}\right|^2+ (2+4)|Q\backslash Q_\eta|\int_\Omega f^2\,d\cP\\
    &\qquad\stackrel{{\scriptstyle \ref{thm:ergodic-thm}}}{\leq}
    |Q|\,2\int_\Omega\left|f^s\right|^2\,d\cP+ 6|Q\backslash
    Q_\eta|\int_\Omega f^2\,d\cP\,.
  \end{align*}  
  Since the last estimate holds for every small $\eta>0$, we obtain
  inequality \eqref{eq:Korn-Omega}.
\end{proof}

\subsection{\label{sub:Approximation-procedure}Solutions to the
  approximate problem and a priori estimates}

\begin{lem}
  \label{lem:EPISIG-apriori-1} There exists a unique solution
  $p_{\delta,n}$, $z_{\delta,n}$, $\v_{\delta,n}$ to problem
  \eqref{eq:EPISIG-approx-1}--\eqref{eq:EPISIG-approx-2} which
  satisfies the a priori estimate\begin{equation}
    \norm{p_{\delta,n}}_{\cV^{1}_{0}}+\norm{z_{\delta,n}}_{\cV^{1}_{0}}+\norm{\v_{\delta,n}}_{\cV^{1}_{0}}\leq
    c\left(\Ups_{n}^{\delta}(z_{\delta,n}(0))+\norm{\xi}_{H^{1}(0,T)}\right)\,,\label{eq:lem-3-2-1}
  \end{equation}
  with $\cV^{1}_{0}:=H^{1}(0,T;L^{2}(\Omega;\symM))$ and $c$
  independent of $\delta$ and $n$.
\end{lem}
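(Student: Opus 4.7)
My plan is to reduce the system \eqref{eq:EPISIG-approx-1}--\eqref{eq:EPISIG-approx-2} to a single ODE for $p_{\delta,n}$ with globally Lipschitz right-hand side, apply Picard-Lindel\"of for existence and uniqueness, and derive the a priori estimate \eqref{eq:lem-3-2-1} by an energy argument that tests the time-differentiated constraint against $\partial_{t}z_{\delta,n}$. For the reduction, I observe that given $p \in \LOMns$ and $\xi \in \symM$, the closure relation \eqref{eq:EPISIG-approx-2} amounts to finding $v \in \cV_{pot,n}^{2}(\Omega)$ such that $z := C_{n}^{-1}(\xi + v^{s} - p)$ lies in $L_{sol,n}^{2}(\Omega)$; since $z$ is automatically symmetric, this reduces to the variational equation
\[
\int_{\Omega} C^{-1}v^{s} : (v')^{s}\,d\cP \;=\; -\int_{\Omega} C^{-1}(\xi - p) : (v')^{s}\,d\cP \qquad\forall v' \in \cV_{pot,n}^{2}(\Omega)\,.
\]
The bilinear form on the left is bounded and, by \eqref{eq:ass-data-1} together with the Korn inequality on $\Omega$ (Lemma \ref{lem:Korn-Omega}), coercive. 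Lax-Milgram produces a unique $v = V(p,\xi)$ depending linearly and Lipschitz-continuously on $(p,\xi)$, so that $z = Z(p,\xi) := C_{n}^{-1}(\xi + V(p,\xi)^{s} - p)$. Substituting into \eqref{eq:EPISIG-approx-1} yields the finite-dimensional ODE $\partial_{t}p_{\delta,n} = P_{n}\,\partial\Psi^{\delta}(Z(p_{\delta,n},\xi) - B_{n}p_{\delta,n})$ with $p_{\delta,n}(0) = 0$, whose right-hand side is globally Lipschitz in $p_{\delta,n}$ by \eqref{eq:prop-psi-delta} and the boundedness of $P_{n}$, $B_{n}$, $C_{n}^{-1}$. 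Picard-Lindel\"of produces a unique $p_{\delta,n} \in C^{1}([0,T];\LOMns)$, and $z_{\delta,n}, \v_{\delta,n}$ inherit the regularity through the Lax-Milgram map.

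For the a priori estimate, differentiate \eqref{eq:EPISIG-approx-2} in time and test with $\partial_{t}z_{\delta,n}$: since $\partial_{t}z_{\delta,n}$ is symmetric and lies in $L_{sol,n}^{2}(\Omega)$ while $\partial_{t}\v_{\delta,n} \in \cV_{pot,n}^{2}(\Omega)$, the term $\scp{\partial_{t}\v_{\delta,n}^{s}}{\partial_{t}z_{\delta,n}}_{\Omega} = \scp{\partial_{t}\v_{\delta,n}}{\partial_{t}z_{\delta,n}}_{\Omega}$ vanishes, leaving
\[
\scp{C\partial_{t}z_{\delta,n}}{\partial_{t}z_{\delta,n}}_{\Omega} + \scp{\partial_{t}p_{\delta,n}}{\partial_{t}z_{\delta,n}}_{\Omega} \;=\; \scp{\partial_{t}\xi}{\partial_{t}z_{\delta,n}}_{\Omega}\,.
\]
Differentiating the flow rule via the chain rule, using the single-valued subdifferential $\partial\Upsilon_{n}^{\delta}$, gives
\[
\tfrac{d}{dt}\Upsilon_{n}^{\delta}(z_{\delta,n} - B_{n}p_{\delta,n}) \;=\; \scp{\partial_{t}p_{\delta,n}}{\partial_{t}z_{\delta,n}}_{\Omega} - \scp{\partial_{t}p_{\delta,n}}{B_{n}\partial_{t}p_{\delta,n}}_{\Omega}\,.
\]
Combining these identities, integrating on $[0,t]$, invoking \eqref{eq:ass-data-1} and absorbing the right-hand side via Young's inequality leads to
\[
\tfrac{\gamma}{2}\int_{0}^{t}\norm{\partial_{t}z_{\delta,n}}^{2} + \beta\int_{0}^{t}\norm{\partial_{t}p_{\delta,n}}^{2} + \Upsilon_{n}^{\delta}\bigl(z_{\delta,n}(t) - B_{n}p_{\delta,n}(t)\bigr) \;\leq\; \Upsilon_{n}^{\delta}(z_{\delta,n}(0)) + c\,\norm{\xi}_{H^{1}(0,T)}^{2}\,,
\]
which already accounts for the two terms on the right-hand side of \eqref{eq:lem-3-2-1}. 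Since $p_{\delta,n}(0) = 0$, integration in time yields the $L^{2}$-bound on $p_{\delta,n}$; the bound on $z_{\delta,n}$ follows analogously from $z_{\delta,n}(0) = 0$ (which itself comes from $\xi(0) = 0 = p_{\delta,n}(0)$ through the Lax-Milgram step), and the bounds on $\v_{\delta,n}$ and $\partial_{t}\v_{\delta,n}$ follow from \eqref{eq:EPISIG-approx-2} combined with Korn's inequality $\norm{\v}_{\LOM} \leq 2\norm{\v^{s}}_{\LOM}$.

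The principal obstacle is removing $\v_{\delta,n}$ from the energy balance, which hinges on the double identity $\scp{\v_{\delta,n}^{s}}{\partial_{t}z_{\delta,n}}_{\Omega} = \scp{\v_{\delta,n}}{\partial_{t}z_{\delta,n}}_{\Omega} = 0$. The first equality uses symmetry of $z_{\delta,n}$, and the second uses the orthogonality $\cV_{pot,n}^{2}(\Omega) \perp L_{sol,n}^{2}(\Omega)$, which is engineered into the construction of $\LOMn$ through the separate bases $\{e_{k}\}$ and $\{\tilde{e}_{k}\}$ for potentials and solenoids. Korn's inequality on $\Omega$ is then invoked twice: once as the coercivity of the Lax-Milgram problem determining $V(p,\xi)$, and again to recover the norm of $\v_{\delta,n}$ from that of its symmetric part at the end of the estimate.
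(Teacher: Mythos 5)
Your proposal is correct and follows essentially the same approach as the paper: reduce the closure relation to a linear variational problem solved by Lax--Milgram plus the Korn inequality on $\Omega$, obtain a Lipschitz ODE for $p_{\delta,n}$ and invoke Picard--Lindel\"of, then derive the a priori bound by testing the time-differentiated constraint with $\partial_t z_{\delta,n}$ and using the chain rule for $\Upsilon_n^{\delta}$, exactly as in the paper's ``a priori estimates of order 1.'' The only genuine difference is at the very end: the paper additionally performs a separate ``order 0'' estimate (testing the time-differentiated constraint with $z_{\delta,n}$ itself, then applying Gronwall) to bound $\sup_t\norm{z_{\delta,n}(t)}$ and $\sup_t\norm{p_{\delta,n}(t)}$, whereas you recover the $L^2$-in-time bounds directly by integrating the derivative bounds from the vanishing initial data $p_{\delta,n}(0)=z_{\delta,n}(0)=0$. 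Your shortcut is valid in the present context (where $\xi\in H^1_*$ forces $z_{\delta,n}(0)=0$ through the Lax--Milgram step); the paper's extra order-0 identity is, however, not entirely wasted, as the paper re-uses it later in the limit passage $n\to\infty$, $\delta\to 0$ of Theorem \ref{thm:E-Pi-Sig-well-def}.
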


\begin{proof}
  In the following, all integrals over $\Omega$ are with respect to
  $\cP$ and we omit $d\cP$ for ease of notation. We will prove the
  lemma in two steps: we first show that the system
  \eqref{eq:EPISIG-approx-1}--\eqref{eq:EPISIG-approx-2} is equivalent
  to an ordinary differential equation for $p_{\delta,n}$ with
  Lipschitz continuous right hand side. Then, we show that the
  solution admits uniform a priori estimates.

  \smallskip \textbf{Step 1: Existence.} In order to study
  \eqref{eq:EPISIG-approx-1}--\eqref{eq:EPISIG-approx-2}, we fix
  $\tilde{p}\in\LOMns$ and $\tilde{\xi}\in\symM$, and search for
  $\tilde{\v}\in \cV_{pot,n}^{2}(\Omega)$ such that 
  \begin{equation}
    \scp{C_{n}^{-1}\tilde{\v}^{s}}{\zeta}_{\Omega}
    =\scp{C_{n}^{-1}\tilde{p}}{\zeta}_{\Omega}
    -\scp{C_{n}^{-1}\tilde{\xi}}{\zeta}_{\Omega}\qquad\forall\zeta\in
    \cV_{pot,n}^{2}(\Omega)\,.\label{eq:z-is-sol}
  \end{equation} 
  The Lax-Milgram theorem in combination with Korn's inequality
  \eqref{eq:Korn-Omega} yields a unique solution $\tilde{\v}\in
  \cV_{pot,n}^{2}(\Omega)$ of the last equality. We introduce the
  mapping $V_{\tilde{\xi}}:\,\LOMns\rightarrow \cV_{pot,n}^{2}(\Omega)$
  with $V_{\tilde{\xi}}(\tilde{p})=\tilde{\v}$ and note that this
  operator is linear and bounded. We then look for a solution
  $p_{\delta,n}\in C^{1}(0,T;\LOMn)$ to the following version of
  \eqref{eq:EPISIG-approx-1}: \[
  \partial_{t}p_{\delta,n}=\partial\Upsilon_{n}^{\delta}
  \left(C_{n}^{-1}\left(\xi+V_{\xi}(p_{\delta,n})^s -
      p_{\delta,n}\right)-B_{n}\, p_{\delta,n}\right)\,.\] Relation
  \eqref{eq:funct-deriv} yields the Lipschitz continuity of
  $\partial\Upsilon_{n}^{\delta}$. Therefore, since also
  $\partial\Upsilon_{n}^{\delta}$, $C_{n}^{-1}$, $V_{\xi}^{s}$ and
  $B_{n}$ are Lipschitz-continuous mappings $\LOMns\rightarrow\LOMns$,
  we find a unique solution $p_{\delta,n}\in C^{1}([0,T];\LOMns)$ of
  the ordinary differential equation (a priori bounds are provided
  below). We furthermore set $\v_{\delta,n}=V_{\xi}(p_{\delta,n})\in
  C^{1}([0,T];\cV_{pot,n}^{2}(\Omega))$ and
  $z_{\delta,n}=C_{n}^{-1}\left(\xi+\v_{\delta,n}^{s}-p_{\delta,n}\right)\in
  H^{1}(0,T;\LOMns)$.  From \eqref{eq:z-is-sol} and the definition of
  $\v_{\delta,n}$, it follows that $z_{\delta,n}\in
  H^{1}(0,T;L_{sol,n}^{2}(\Omega))$.  Note that $p_{\delta,n}$,
  $z_{\delta,n}$ and $\v_{\delta,n}$ are constructed in such a way
  that \eqref{eq:EPISIG-approx-1}--\eqref{eq:EPISIG-approx-2}
  holds. The construction shows that the solution is uniquely
  determined.

  \smallskip \textbf{Step 2: A priori estimates of order 0.} We take
  the time derivative of \eqref{eq:EPISIG-approx-2}, multiply by
  $z_{\delta,n}$ and integrate over $[0,t]\times\Omega$ for
  $t\in(0,T]$ to find 
  \begin{align}
    \int_{0}^{t}\int_{\Omega} \partial_{t}\xi : z_{\delta,n} &
    \stackrel{{\scriptstyle \eqref{eq:EPISIG-approx-2}}}{=}\int_{0}^{t}\int_{\Omega}\left(\left(C_{n}\partial_{t}z_{\delta,n}\right):z_{\delta,n}+\partial_{t}p_{\delta,n}:z_{\delta,n}-\partial_{t}\v^s_{\delta,n}:z_{\delta,n}\right)\nonumber \\
    & =\frac{1}{2}\left.\int_{\Omega}\left(p_{\delta,n}:B_{n}p_{\delta,n} +z_{\delta,n}:C_nz_{\delta,n}\right)\right|_{0}^{t}\nonumber \\
    & \phantom{=}+\int_{0}^{t}\scp{\partial_{t}p_{\delta,n}}{z_{\delta,n}-B_{n}p_{\delta,n}}_{\Omega}-\int_{0}^{t}\int_{\Omega}z_{\delta,n}:\partial_{t}\v_{\delta,n}\nonumber \\
    & \stackrel{{\scriptstyle (\ast)}}{=}\frac{1}{2}\left.\int_{\Omega}\left(p_{\delta,n}:(Bp_{\delta,n})+z_{\delta,n}:(Cz_{\delta,n})\right)\right|_{0}^{t}\nonumber \\
    &
    \phantom{=}+\int_{0}^{t}\left(\Upsilon_{n}^{\delta\ast}\left(\partial_{t}p_{\delta,n}\right)+\Upsilon_{n}^{\delta}\left(z_{\delta,n}-B_{n}p_{\delta,n}\right)\right)\,.\label{eq:EPISIG-est-1-a}
  \end{align}
  In $(\ast)$ we used the orthogonality of potentials and (symmetric)
  solenoidals,
  $\int_{\Omega}z_{\delta,n}:\partial_{t}\v_{\delta,n}=0$, and Lemma
  \ref{lemma:convprop} (v), written as \[ \scp{\del_t
    p}{z-Bp}_{\Omega}=\Upsilon_{n}^{\delta}(z-Bp)
  +\Upsilon_{n}^{\delta\ast}(\del_t p)\quad\Leftrightarrow\quad \del_t
  p=\partial\Upsilon_{n}^{\delta}(z-Bp)\,.\] \textbf{A priori
    estimates of order 1.} Taking the time derivative of
  \eqref{eq:EPISIG-approx-2}, multiplying the result by
  $\partial_{t}z_{\delta,n}$ and integrating over $\Omega$, we
  get \begin{align*}
    \int_{\Omega} \partial_{t}\xi : \partial_{t}z_{\delta,n} & =\int_{\Omega}\partial_{t}z_{\delta,n}:\partial_{t}\left(p_{\delta,n}+C_{n}z_{\delta,n}-\v_{\delta,n}\right)+\int_{\Omega}\left(B_{n}\partial_{t}p_{\delta,n}-B_{n}\partial_{t}p_{\delta,n}\right):\partial_{t}p_{\delta,n}\displaybreak[2]\\
    & \stackrel{{\scriptstyle \eqref{eq:EPISIG-approx-1}}}{=}\scp{\partial_{t}z_{\delta,n}-B_{n}\partial_{t}p_{\delta,n}}{\partial\Upsilon_{n}^{\delta}(z_{\delta,n}-B_{n}p_{\delta,n})}_{\Omega}+\int_{\Omega}\left(B_{n}\partial_{t}p_{\delta,n}\right):\partial_{t}p_{\delta,n}\\
    & \phantom{=}+\int_{\Omega}\left(C_{n}\partial_{t}z_{\delta,n}\right):\partial_{t}z_{\delta,n}-\int_{\Omega}\partial_{t}z_{\delta,n}:\partial_{t}\v_{\delta,n}\displaybreak[2]\\
    & \stackrel{{\scriptstyle
        (\ast)}}{=}\frac{d}{dt}\Upsilon_{n}^{\delta}(z_{\delta,n}-B_{n}p_{\delta,n})+\int_{\Omega}\left(\left(C\partial_{t}z_{\delta,n}\right):\partial_{t}z_{\delta,n}+\left(B\partial_{t}p_{\delta,n}\right):\partial_{t}p_{\delta,n}\right)\,,\end{align*}
  where we used
  $\int_{\Omega}\partial_{t}z_{\delta,n}:\partial_{t}\v_{\delta,n}=0$
  in $(\ast)$. We integrate the last equality over $(0,t)$ for
  $t\in(0,T]$ and obtain\begin{multline}
    \Upsilon_{n}^{\delta}(z_{\delta,n}(0))+\int_{0}^{t}\int_{\Omega}\partial_{t}z_{\delta,n}:\partial_{t}\xi^{s}\\
    \geq\Upsilon_{n}^{\delta}(z_{\delta,n}(t)-B_{n}p_{\delta,n}(t))+\int_{0}^{t}\int_{\Omega}\left(\left(C\partial_{t}z_{\delta,n}\right):\partial_{t}z_{\delta,n}+\left(B\partial_{t}p_{\delta,n}\right):\partial_{t}p_{\delta,n}\right)\,.\label{eq:EPISIG-ineq-chain-1}\end{multline}
  Since $\Upsilon_{n}^{\delta\ast}$ and $\Upsilon_{n}^{\delta}$ are
  positive, we can neglect them in \eqref{eq:EPISIG-est-1-a}. Applying
  the Cauchy-Schwarz inequality to the right hand side of
  \eqref{eq:EPISIG-est-1-a} and then Gronwall's inequality yields an
  estimate\[
  \sup_{t\in[0,T]}\norm{z_{\delta,n}(t)}_{\LOM}+\sup_{t\in[0,T]}\norm{p_{\delta,n}(t)}_{\LOM}\leq
  c\norm{\xi}_{H^{1}}\,.\] From positivity of $\Upsilon_{n}^{\delta}$
  on the right hand side of \eqref{eq:EPISIG-ineq-chain-1}, it follows
  that\[
  \int_{0}^{t}\int_{\Omega}\left(\left(C\partial_{t}z_{\delta,n}\right):\partial_{t}z_{\delta,n}+\left(B\partial_{t}p_{\delta,n}\right):\partial_{t}p_{\delta,n}\right)\leq\Upsilon_{n}^{\delta}(z_{\delta,n}(0))+\norm{\xi}_{H^{1}}\,.\]
  The last two inequalities yield \eqref{eq:lem-3-2-1} for
  $z_{\delta,n}$ and $p_{\delta,n}$. The inequality for
  $\v_{\delta,n}$ follows from equation \eqref{eq:EPISIG-approx-2}.
\end{proof}

\subsection{Proof of Theorem \ref{thm:E-Pi-Sig-well-def}}
\label{sub:Proof-of-Thm-Main2}

\paragraph*{Existence.}

Using the sequence $(p_{\delta,n},z_{\delta,n},\v_{\delta,n})$ of
solutions to \eqref{eq:EPISIG-approx-1}--\eqref{eq:EPISIG-approx-2},
we can now prove Theorem \ref{thm:E-Pi-Sig-well-def}. For
$n\rightarrow\infty$, we find weakly convergent subsequences of
$p_{\delta,n}$, $z_{\delta,n}$, $\v_{\delta,n}$ in $\cV^{1}_{0}$ with
limits $p_{\delta}$, $z_{\delta}$, $\v_{\delta}$.  We note that
$z_{\delta,n}(0)$ is the unique solution in $L_{sol,n}^{2}(\Omega)$
to \[
\int_{\Omega}\left(C_{n}z_{\delta,n}(0)\right):\psi=\int_{\Omega}\xi(0):\psi\qquad\forall\psi\in
L_{sol,n}^{2}(\Omega)\,.\] Hence, since we consider only $\xi$ with
$\xi(0) = 0$, the initial values $z_{\delta,n}(0)$ vanish identically.
As a consequence, also $\Ups_n^\delta(z_{\delta,n}(0))$ in
\eqref{eq:lem-3-2-1} vanishes.  The estimate \eqref{eq:lem-3-2-1}
therefore implies \eqref{eq:EPISIG-apriori-global} for
$(p_{\delta},z_{\delta},\v_{\delta})$.

Since $p_{\delta,n}$, $z_{\delta,n}$, $\v_{\delta,n}$ satisfy
\eqref{eq:EPISIG-approx-2}, the limits $p_{\delta}$, $z_{\delta}$,
$\v_{\delta}$ satisfy \begin{equation} C
  z_{\delta}=\xi+\v_{\delta}^s-p_{\delta}\,.\label{eq:EPISIG-approx-2-delta}\end{equation}
We take the limit $n\rightarrow\infty$ in \eqref{eq:EPISIG-est-1-a},
apply Lemma \ref{lem:Ben-Veneroni-generalized} and exploit the
vanishing initial data to conclude that the functions $p_{\delta}$,
$z_{\delta}$, $\v_{\delta}$ satisfy \begin{equation}
  \int_{0}^{t}\int_{\Omega}\left(\Psi^{\delta\ast}\left(\partial_{t}p_{\delta}\right)+\Psi^{\delta}\left(z_{\delta}-B\,
      p_{\delta}\right)\right)\leq\int_{0}^{t}\int_{\Omega}z_{\delta}:\partial_{t}\xi-\frac{1}{2}\left.\int_{\Omega}\left(p_{\delta}:(B\,
      p_{\delta})+z_{\delta}:(Cz_{\delta})\right)\right|_{0}^{t}\,.\label{eq:EPISIG-ineq-chain}
\end{equation}

In the limit $\delta\rightarrow0$ we find weakly convergent
subsequences of $p_{\delta}$, $z_{\delta}$, $\v_{\delta}$ with the
respective weak limits $p$, $z$, $\v$ satisfying the estimate
\eqref{eq:EPISIG-apriori-global}.  Passing to the limit
$\delta\rightarrow0$ in \eqref{eq:EPISIG-approx-2-delta}, we find that
$(p,z,\v)$ satisfies \eqref{eq:EPISIG-2}. Furthermore, passing to the
limit in \eqref{eq:EPISIG-ineq-chain}, using Lemma
\ref{lem:weak-lsc-psi}, we find that the functions $p$, $z$, $\v$
satisfy \[
\int_{0}^{t}\int_{\Omega}\left(\Psi^{\ast}\left(\partial_{t}p\right)+\Psi\left(z-B\,
    p\right)\right)\leq\int_{0}^{t}\int_{\Omega}z:\partial_{t}\xi-\frac{1}{2}\left.\left(\int_{\Omega}p:(Bp)+\int_{\Omega}z:(Cz)\right)\right|_{0}^{t}\,.\]
We thus obtain
\begin{align*} &
  \int_{0}^{t}\int_{\Omega}\left(\Psi^{\ast}\left(\partial_{t}p\right)+\Psi\left(z-B\,
  p\right)\right)=\int_{0}^{t}\int_{\Omega}\left(z:\partial_{t}\xi-\partial_{t}p:Bp-\partial_{t}z:Cz\right)\\ &
  \qquad\qquad\stackrel{{\scriptstyle
      \eqref{eq:EPISIG-2}}}{=}\int_{0}^{t}\int_{\Omega}\left(z:C\partial_{t}z-
  z:\partial_{t}\v^s+
  z:\partial_{t}p-\partial_{t}p:Bp-\partial_{t}z:Cz\right)\\ &
  \qquad\qquad\stackrel{\phantom{{\scriptstyle
        \eqref{eq:EPISIG-2}}}}{=}\int_{0}^{t}\int_{\Omega}\left(-z:\partial_{t}\v^s+\partial_{t}p:\left(z-B\,
  p\right)\right) \stackrel{\phantom{{\scriptstyle
        \eqref{eq:EPISIG-2}}}}{=}\int_{0}^{t}\int_{\Omega}\partial_{t}p:\left(z-B\,
  p\right)\,\end{align*} for every $t\in(0,T)$. On the other hand,
  since Lemma \ref{lemma:convprop} (iii) yields
  $\left(\Psi^{\ast}\left(\partial_{t}p\right)+\Psi\left(z-Bp\right)\right)\geq\partial_{t}p:\left(z-Bp\right)$
  pointwise a.e., we find \[
  \left(\Psi^{\ast}\left(\partial_{t}p\right)+\Psi\left(z-Bp\right)\right)=\partial_{t}p:\left(z-Bp\right)\]
  pointwise a.e. in $(0,T)\times\Omega$. The Fenchel equality of Lemma
  \ref{lemma:convprop} (v) then yields \eqref{eq:EPISIG-1}.

\paragraph*{Uniqueness and continuity.}

Let $\xi_{1},\xi_{2}\in H^{1}_*(0,T;\symM)$. Let
$\left(p_{i},z_{i},\v_{i}\right)_{i\in\left\{ 1,2\right\} }$ be two
solutions to \eqref{eq:EPISIG-1}-\eqref{eq:EPISIG-2} for $\xi_{1}$,
$\xi_{2}$ respectively with the difference
$\left(\tilde{p},\tilde{z},\tilde{\v}\right):=\left(p_{1},z_{1},\v_{1}\right)-\left(p_{2},z_{2},\v_{2}\right)$.
We integrate $\tilde{z}:\partial_{t}\left(\xi_{1}-\xi_{2}\right)$ over
$\Omega$ and obtain from a calculation similar to
\eqref{eq:EPISIG-est-1-a}
\begin{align*}
  &
  \left.\int_{\Omega}\tilde{z}:(\xi_{1}-\xi_{2})\right|_{0}^{t}-\int_{0}^{t}\int_{\Omega}\partial_{t}\tilde{z}:(\xi_{1}-\xi_{2})\\
  &
  \qquad\qquad=\int_{0}^{t}\int_{\Omega}\tilde{z}:\partial_{t}(\xi_{1}-\xi_{2})=\int_{\Omega}\tilde{z}:\partial_{t}\left(C\tilde{z}-\tilde{p}+\tilde{\v}\right)\displaybreak[2]\\
  &
  \qquad\qquad=\frac{1}{2}\frac{d}{dt}\int_{\Omega}\left(\tilde{p}:(B\tilde{p})+\tilde{z}:(C\tilde{z})\right)\\
  &
  \qquad\qquad\phantom{=}+\int_{\Omega}\left[\left(z_{1}(t,\omega)-B(\omega)\,
      p_{1}(t,\omega)\right)-\left(z_{2}(t,\omega)-B(\omega)\,
      p_{2}(t,\omega)\right)\right]\left(\partial_{t}p_{1}-\partial_{t}p_{2}\right)\,.
\end{align*}
From the monotonicity of $\partial\Psi$ (Lemma \ref{lemma:convprop}
(ii)) and \eqref{eq:EPISIG-1}$_{1,2}$, we find \[
\frac{1}{2}\left.\int_{\Omega}\left(\tilde{p}:(B\tilde{p})+\tilde{z}:(C\tilde{z})\right)\right|_{0}^{t}
\le
\left.\int_{\Omega}\tilde{z}:(\xi_{1}-\xi_{2})\right|_{0}^{t}-\int_{0}^{t}\int_{\Omega}\partial_{t}\tilde{z}:(\xi_{1}-\xi_{2})
\]
for every $t\in(0,T)$.  Compactness of the embedding $H^{1}(0,T;\symM)
\subset C([0,T];\symM)$ and boundedness of $\partial_{t}\tilde{z}$
provide the weak continuity of the mapping $\xi\mapsto(z,p,\v)$.  At
the same time, it implies uniqueness of solutions,
i.e. $\left(\tilde{p},\tilde{z},\tilde{\v}\right)=(0,0,0)$ for
$\xi_{1}=\xi_{2}$. This completes the proof of Theorem
\ref{thm:E-Pi-Sig-well-def}.

\section{Proof of the main theorem}
\label{sec.proof-main}

\subsection{Preliminaries}

\begin{lem}[A time dependent ergodic theorem]
  \label{lem:ergodic-time}Let $f\in L^{p}(0,T;L^{p}(\Omega))$, $1\leq
  p<\infty$ and $f_{\omega}(t,x):=f(t,\tau_{x}\omega)$.  Then, for 
  almost every $\omega\in\Omega$, there holds
  $f_{\omega}\in L^{p}(0,T;L_{loc}^{p}(\Rd))$.
   Furthermore, for almost
  every $\omega\in\Omega$, there holds 
  \begin{equation}
    \lim_{\eps\rightarrow0}\int_{0}^{T}\int_{\Q}f(t,\tau_{\frac{x}{\eps}}\omega)\,
    dx\, dt=\left|\Q\right|\int_{0}^{T}\int_{\Omega}f(t,\omega)\,
    d\cP(\omega)\, dt\,.\label{eq:lem-ergodic-time}\end{equation}
\end{lem}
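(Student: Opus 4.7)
My plan is to reduce the statement to the purely spatial ergodic theorem (Theorem \ref{thm:ergodic-thm}) by integrating out the time variable. Since $(0,T)$ has finite length and $\cP(\Omega)=1$, Jensen's inequality gives $L^{p}((0,T)\times\Omega)\subset L^{1}((0,T)\times\Omega)$, and I shall work in $L^{1}$ where convenient. Two applications of Theorem \ref{thm:ergodic-thm} glued by Fubini should suffice; no density argument in $L^{p}((0,T)\times\Omega)$ is needed.

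First I would set $F(\omega):=\int_{0}^{T}|f(t,\omega)|^{p}\,dt$. By Fubini $F\in L^{1}(\Omega)$ with $\|F\|_{L^{1}(\Omega)}=\|f\|_{L^{p}(0,T;L^{p}(\Omega))}^{p}$, and the regularity half of Theorem \ref{thm:ergodic-thm} gives a null set $N_{1}\subset\Omega$ so that $x\mapsto F(\tau_{x}\omega)\in L^{1}_{loc}(\Rd)$ for $\omega\notin N_{1}$. A further application of Fubini to the nonnegative integrand $|f(t,\tau_{x}\omega)|^{p}$ then yields, for every bounded $K\subset\Rd$,
\begin{equation*}
\int_{0}^{T}\int_{K}|f(t,\tau_{x}\omega)|^{p}\,dx\,dt=\int_{K}F(\tau_{x}\omega)\,dx<\infty\,,
\end{equation*}
which is precisely the claim $f_{\omega}\in L^{p}(0,T;L^{p}_{loc}(\Rd))$.

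Next I would tackle \eqref{eq:lem-ergodic-time} by applying Theorem \ref{thm:ergodic-thm} to $g(\omega):=\int_{0}^{T}f(t,\omega)\,dt\in L^{1}(\Omega)$, which produces a null set $N_{2}\subset\Omega$ such that for every $\omega\notin N_{2}$
\begin{equation*}
\lim_{\eps\to0}\int_{\Q}g(\tau_{x/\eps}\omega)\,dx=|\Q|\int_{\Omega}g\,d\cP=|\Q|\int_{0}^{T}\int_{\Omega}f(t,\omega)\,d\cP(\omega)\,dt\,.
\end{equation*}
For $\omega\notin N_{1}\cup N_{2}$, the first-step regularity applied with $K=\Q$ makes $\int_{0}^{T}\int_{\Q}|f(t,\tau_{x/\eps}\omega)|\,dx\,dt$ finite and legitimizes the Fubini swap
\begin{equation*}
\int_{\Q}g(\tau_{x/\eps}\omega)\,dx=\int_{0}^{T}\int_{\Q}f(t,\tau_{x/\eps}\omega)\,dx\,dt\,,
\end{equation*}
and combining this with the previous display yields \eqref{eq:lem-ergodic-time}.

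The only technical point to watch is that the Fubini exchange is valid for every $\eps>0$, i.e.\ that the rescaled realization of $F$ integrates against $\Q$. This is automatic: Theorem \ref{thm:ergodic-thm} applied to $F$ itself gives $\int_{\Q}F(\tau_{x/\eps}\omega)\,dx\to|\Q|\|f\|_{L^{p}(0,T;L^{p}(\Omega))}^{p}$ as $\eps\to 0$, hence the quantity is bounded in $\eps$ for $\omega$ outside a null set, which is exactly what Fubini requires. With this observation the proof reduces to two invocations of Theorem \ref{thm:ergodic-thm} plus Fubini, and I do not anticipate further obstacles.
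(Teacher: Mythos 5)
Your argument is correct and follows essentially the same route as the paper: both reduce the claim to the spatial ergodic theorem by integrating out the time variable, applying Theorem \ref{thm:ergodic-thm} to $\omega\mapsto\int_{0}^{T}f(t,\omega)\,dt$ for the convergence and using Fubini/Tonelli together with the $L^{1}_{loc}$-regularity clause of Theorem \ref{thm:ergodic-thm} (the paper instead invokes measure preservation of $\tau_{x}$ directly, a cosmetic difference) for the regularity statement.
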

\begin{proof}
  Since the mapping $(x,\omega)\mapsto\tau_{x}\omega$ is measurable,
  we find that $\tilde{f}(\omega,t,x):=f(t,\tau_{x}\omega)$ is
  $\cP\otimes\cL\otimes\cL^{d}$-measurable.  Since the mappings
  $\tau_{x}:\Omega\rightarrow\Omega$ are measure preserving, we find
  for every $x\in\Rd$ \[
  \int_{0}^{T}\int_{\Omega}\left|f(t,\omega)\right|^{p}\,
  d\cP(\omega)dt=\int_{0}^{T}\int_{\Omega}\left|f(t,\tau_{x}\omega)\right|^{p}\,
  d\cP(\omega)dt\,.\] Integrating the last equation over
  $\Q\subset\Rd$ and applying Fubini's theorem, we obtain
  \begin{equation*}
    \left|\Q\right|\int_{0}^{T}\int_{\Omega}\left|f(t,\omega)\right|^{p}\, d\cP(\omega)dt 
    =\int_{\Omega}\int_{0}^{T}\int_{\Q}\left|f(t,\tau_{x}\omega)\right|^{p}\, dx\, dt\, d\cP(\omega)\,.
  \end{equation*}
  Thus, $\tilde{f}$ has the integrability $\tilde{f}\in
  L^{p}(\Omega;L^{p}(0,T;L^{p}(\Q)))$ and $f_{\omega}\in
  L^{p}(0,T;L^{p}(\Q))$ for almost every $\omega\in\Omega$.  In
  particular, $f_{\omega}\in L^{1}(0,T;L^{1}(\Q))$. Setting
  $F(\omega):=\int_{0}^{T}f(t,\omega)\, dt$, we find as a consequence
  of Theorem \ref{thm:ergodic-thm}:\[
  \lim_{\eps\rightarrow0}\int_{0}^{T}\int_{\Q}f(t,\tau_{\frac{x}{\eps}}\omega)\,
  dx\,
  dt=\lim_{\eps\rightarrow0}\int_{\Q}F(\tau_{\frac{x}{\eps}}\omega)\,
  dx=\left|\Q\right|\int_{\Omega}F\,
  d\cP=\left|\Q\right|\int_{0}^{T}\int_{\Omega}f\, d\cP\, dt\,.\]
This was the claim in \eqref{eq:lem-ergodic-time}.
\end{proof}

\begin{lem}
  \label{lem:Div-Curl-Lemma}(Div-curl-lemma) Let $U\subset\Rd$ be open
  and bounded with Lipschitz-boundary $\partial U$. For a sequence
  $\eps\rightarrow0$ we consider sequences of functions $u^\eps$ and
  $\v^\eps$ as follows:
  \begin{align*}
    &u^{\eps}\in L^{2}(0,T;L^{2}(U;\Rdd))\quad\mbox{with}
    \quad\diver u^{\eps}(t)=0\quad\mbox{in }\mathcal{D}'(U)\mbox{ for a.e. }t\in[0,T]\,,\\
    &\v^{\eps}\in L^{2}(0,T;L^{2}(U;\Rdd))\,,\quad
    \v^{\eps}(t,x):=\v(t,\tau_{\frac{x}{\eps}}\omega)\quad\mbox{for}\quad
    \v\in L^{2}(0,T;\cV_{pot}^{2}(\Omega))
  \end{align*}
  and some $\omega\in\Omega$. We assume the boundedness
  $\norm{u^{\eps}}_{L^{2}(0,T;L^{2}(U))} \le C_0$. Then, for almost
  every $\omega\in\Omega$, there holds
  \begin{equation}
    \lim_{\eps\rightarrow0}\int_{0}^{T}\int_{U}u^{\eps}:\v^{\eps}=0\,.
    \label{eq:conv-div-curl}
  \end{equation}
\end{lem}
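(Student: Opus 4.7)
The argument follows the classical div–curl strategy adapted to the stochastic setting of \cite{JikovKozlovOleinik94}: we construct a time-dependent potential $\phi^\eps$ for $\v^\eps$, integrate by parts against the solenoidal $u^\eps$, and absorb the boundary contribution using a cutoff together with the ergodic theorem.

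\textbf{Step 1 (time-dependent potential).} For a.e. $t\in(0,T)$ we have $\v(t,\cdot)\in\cV_{pot}^{2}(\Omega)$, so Lemma \ref{lem:existence-primitive-v-omega} yields, for $\cP$-a.e.\ $\omega$, a function $\phi^\eps(t,\cdot)\in H^{1}(U;\Rd)$ with $\nabla_{x}\phi^\eps(t,x)=\v(t,\tau_{x/\eps}\omega)=\v^\eps(t,x)$, normalized so that $\int_{U}\phi^\eps(t,\cdot)=0$. Poincaré's inequality gives the pointwise-in-$t$ bound
\begin{equation*}
 \|\phi^\eps(t,\cdot)\|_{L^{2}(U)}\leq C_{U}\,\|\v^\eps(t,\cdot)\|_{L^{2}(U)}\,,
\end{equation*}
and by Lemma \ref{lem:ergodic-time} the right-hand side is bounded in $L^{2}(0,T)$ uniformly in $\eps$ by a multiple of $\|\v\|_{L^{2}(0,T;L^{2}(\Omega))}$. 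Since Lemma \ref{lem:existence-primitive-v-omega} gives $\phi^\eps(t,\cdot)\to 0$ in $L^{2}(U)$ for a.e.\ $t$, dominated convergence yields $\phi^\eps\to 0$ strongly in $L^{2}(0,T;L^{2}(U))$.

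\textbf{Step 2 (cutoff and integration by parts).} As in Lemma \ref{lem:Korn-Omega}, choose $\psi_{\eta}\in C_{c}^{\infty}(U)$ with $0\leq\psi_{\eta}\leq 1$, $\psi_{\eta}\equiv 1$ on $U_{\eta}:=\{x\in U:\dist(x,\partial U)>\eta\}$, and $|\nabla\psi_{\eta}|\leq 2/\eta$. Since $\phi^\eps(t,\cdot)\psi_{\eta}\in H_{0}^{1}(U;\Rd)$ and $\diver u^\eps(t)=0$ distributionally, we have, for a.e.\ $t$,
\begin{equation*}
 \int_{U}u^\eps(t):\nabla\bigl(\phi^\eps(t)\psi_{\eta}\bigr)\,dx \;=\; 0\,.
\end{equation*}
Expanding $\nabla(\phi^\eps\psi_{\eta})=\v^\eps\psi_{\eta}+\phi^\eps\otimes\nabla\psi_{\eta}$ and integrating in time,
\begin{equation*}
 \int_{0}^{T}\!\!\int_{U}u^\eps:\v^\eps\,\psi_{\eta} \;=\; -\int_{0}^{T}\!\!\int_{U}u^\eps:(\phi^\eps\otimes\nabla\psi_{\eta})\,.
\end{equation*}

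\textbf{Step 3 (double limit).} Split $\int_{0}^{T}\!\int_{U}u^\eps:\v^\eps=I_{\eta}^{\eps}+J_{\eta}^{\eps}$ with $I_{\eta}^{\eps}$ the $\psi_{\eta}$-weighted piece and $J_{\eta}^{\eps}$ the $(1-\psi_{\eta})$-piece. By Step 2 and Cauchy–Schwarz,
\begin{equation*}
 |I_{\eta}^{\eps}|\leq \tfrac{2}{\eta}\,\|u^\eps\|_{L^{2}(0,T;L^{2}(U))}\,\|\phi^\eps\|_{L^{2}(0,T;L^{2}(U))} \xrightarrow[\eps\to 0]{} 0
\end{equation*}
by Step 1 and the assumption $\|u^\eps\|_{L^{2}}\leq C_{0}$. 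For $J_{\eta}^{\eps}$, Cauchy–Schwarz and Lemma \ref{lem:ergodic-time} applied to $|\v(t,\omega)|^{2}\chi_{U\setminus U_{\eta}}(x)$ give
\begin{equation*}
 \limsup_{\eps\to 0}|J_{\eta}^{\eps}|\leq C_{0}\,\bigl(|U\setminus U_{\eta}|\bigr)^{1/2}\,\|\v\|_{L^{2}(0,T;L^{2}(\Omega))}\,.
\end{equation*}
Taking first $\eps\to 0$ and then $\eta\to 0$ proves \eqref{eq:conv-div-curl}.

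\textbf{Main obstacle.} The delicate point is Step 1: one must justify that $\phi^\eps(t,\cdot)$ can be chosen measurably in $t$ and that its $L^{2}(U)$ norm is dominated, uniformly in $\eps$, by a function that is integrable in $t$, so that the a.e.-in-$t$ convergence coming from Lemma \ref{lem:existence-primitive-v-omega} upgrades to strong convergence in $L^{2}(0,T;L^{2}(U))$. Once this is in place, the div–curl cancellation and the cutoff argument are standard. Everything else (the $t$-independent pieces) reduces to the same estimates that appear in the proof of the stochastic Korn inequality, Lemma \ref{lem:Korn-Omega}.
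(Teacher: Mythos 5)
Your overall strategy is the same as the paper's: construct the potential $\phi^{\eps}$ from Lemma \ref{lem:existence-primitive-v-omega}, integrate by parts against the solenoidal $u^\eps$ inside a cutoff region, control the boundary collar via the ergodic theorem, and send first $\eps\to 0$ and then $\eta\to 0$. The paper suppresses the time variable and merely remarks that one should use Lemma \ref{lem:ergodic-time} in the time-dependent case; your attempt to carry the time dependence through explicitly is the right instinct, and the measurability of $t\mapsto\phi^\eps(t,\cdot)$ that you flag is not a real obstacle (the mean-zero potential depends continuously, hence measurably, on $\v^\eps(t,\cdot)\in L^2(U)$). However, the crucial claim in your Step 1 is not justified as written.

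You assert that, since $\|\phi^\eps(t,\cdot)\|_{L^2(U)}\le C_U\|\v^\eps(t,\cdot)\|_{L^2(U)}$ is bounded in $L^2(0,T)$ uniformly in $\eps$ and $\phi^\eps(t,\cdot)\to 0$ in $L^2(U)$ for a.e.\ $t$, dominated convergence yields $\phi^\eps\to 0$ in $L^2(0,T;L^2(U))$. This is not a correct application of the dominated convergence theorem: the majorant $C_U\|\v^\eps(t,\cdot)\|_{L^2(U)}$ itself depends on $\eps$, and a uniform bound on the $L^2(0,T)$ norm does not furnish an $\eps$-independent function in $L^1(0,T)$ dominating $\|\phi^\eps(t,\cdot)\|_{L^2(U)}^2$. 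Pointwise a.e.\ convergence to zero together with a uniform norm bound does not imply norm convergence (a moving bump is the standard counterexample). Since your estimate of $I_\eta^\eps$ in Step 3 requires exactly $\|\phi^\eps\|_{L^2(0,T;L^2(U))}\to 0$, this is a genuine gap rather than a cosmetic omission.

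The gap can be closed by replacing dominated convergence with Vitali's convergence theorem. Set $h_\eps(t):=C_U^2\int_U|\v(t,\tau_{x/\eps}\omega)|^2\,dx$. The ergodic theorem combined with a Fubini argument over $(t,\omega)$ shows that, for a.e.\ $\omega$, one has $h_\eps(t)\to h(t):=C_U^2|U|\,\|\v(t,\cdot)\|^2_{L^2(\Omega)}$ for a.e.\ $t$, while Lemma \ref{lem:ergodic-time} gives $\int_0^T h_\eps\to\int_0^T h<\infty$. Scheff\'e's lemma then yields $h_\eps\to h$ in $L^1(0,T)$, so the family $\{h_\eps\}$ is uniformly integrable; since $0\le\|\phi^\eps(t,\cdot)\|^2_{L^2(U)}\le h_\eps(t)$, the family $\{\|\phi^\eps(\cdot)\|^2_{L^2(U)}\}_\eps$ inherits uniform integrability, and Vitali's theorem together with the a.e.-in-$t$ limit $0$ gives $\phi^\eps\to 0$ in $L^2(0,T;L^2(U))$. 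Alternatively one may approximate $\v$ in $L^2(0,T;\cV^2_{pot}(\Omega))$ by functions that are simple in $t$ and apply the time-independent argument on each block. With either repair, your Steps 2 and 3 are correct and coincide with the paper's computation.
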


\begin{proof}
  In this proof, we omit the time-dependence of $u^{\eps}$ and $\v$
  for simplicity of notation, i.e. we consider $u^{\eps}\in
  L^{2}(U;\Rdd)$ and $\v\in \cV_{pot}^{2}(\Omega)$. In the time
  dependent case, one has to apply Lemma \ref{lem:ergodic-time}
  instead of the ergodic theorem \ref{thm:ergodic-thm}.

  We consider a compact set $K\subset U$ and a cut-off function
  $\psi\in C^{\infty}(\Rd)$ with $\psi\equiv1$ on $K$, $\psi\equiv0$
  on $\Rd\backslash U$, and $0\le \psi\le 1$.  We fix
  $\omega\in\Omega$ such that $x\mapsto \v(\tau_{x}\omega)\in
  L_{pot,loc}^{2}(\Rd)$ and such that the assertion of Theorem
  \ref{thm:ergodic-thm} holds.  Furthermore, we make use of
  $\phi_{\eps,\omega,v}$ of Lemma
  \ref{lem:existence-primitive-v-omega} and observe the limit behavior
  \begin{align}
    \label{eq:div-curl-proof-1}
    \int_{U}u^{\eps}:\v^{\eps}\psi&=\int_{U}u^{\eps}:(\nabla
    \phi_{\eps,\omega,v})\psi
    =\int_{U}u^{\eps}:\nabla_x\left(\phi_{\eps,\omega,v}\psi\right)
    -\int_{U}u^{\eps}:\left(\phi_{\eps,\omega,v}\otimes\nabla_x\psi\right)\nonumber\\
    &=-\int_{U}u^{\eps}:\left(\phi_{\eps,\omega,v}\otimes\nabla_x\psi\right)\to
    0
  \end{align}
  as $\eps\rightarrow\infty$ due to $\phi_{\eps,\omega,v}\to 0$ of
  Lemma \ref{lem:existence-primitive-v-omega} and the boundedness of
  $\nabla\psi$ and $u^\eps$.

  Concerning the integral over $u^{\eps}:\v^{\eps} (1-\psi)$, we find
  by the ergodic theorem \ref{thm:ergodic-thm}
  \begin{equation}
    \left|\int_{U}u^{\eps}:\v^{\eps}(1-\psi)\right|\leq C_0
    \norm{\v^{\eps}}_{L^{2}(U\backslash K)}\to C_0 
    \norm{\v}_{\LOM}\left|U\backslash K\right|^{\frac{1}{2}}\label{eq:div-curl-proof-4}
  \end{equation}
  as $\eps\rightarrow0$. Choosing $K\subset U$ large we obtain \eqref
  {eq:conv-div-curl}.
\end{proof}

\subsection{The averaging property of $\Sigma$}

\begin{thm}[Averaging property]
  \label{thm:Sto-averaging} 
  Let the coefficients $B(\omega)$, $C(\omega)$,
  $\Psi(\,\cdot\,;\,\omega)$ be as in Assumption \ref{ass:sto-coeffs}
  and let realizations $C_{\eps}$, $B_{\eps}$, $\Psi_{\eps}$ be
  defined by \eqref {eq:C-eps-from-C}. Then, for a.e.\,$\omega\in
  \Omega$, the coefficients allow averaging in sense of Definition
  \ref{def:Averaging} with the operators $\Sigma$ and $\Pi$ of
  \eqref{eq:def-sig-stoch}.
\end{thm}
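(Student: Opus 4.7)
The plan is to identify the weak limits of the $\eps$-solutions on the simplex $\cT$ with the $\Omega$-averages $\Sigma(\xi)$, $\Pi(\xi)$ coming from the cell-problem solution $(p,z,\v)$ of Theorem~\ref{thm:E-Pi-Sig-well-def}, by comparing the $\eps$-solution to an ``oracle'' built from realizations of $(p,z,\v)$. The ergodic theorem computes the averages, while a monotonicity/Fenchel estimate in the spirit of the uniqueness argument of Theorem~\ref{thm:E-Pi-Sig-well-def} controls the comparison.

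First I would apply the $\eps$-uniform a priori bound \eqref{eq:apriori-eps-prob} on $\cT$ and extract, along a subsequence, weak limits $u^\eps\weakto\bar u$ in $H^1(0,T;H^1(\cT))$ and $\sigma^\eps,e^\eps,p^\eps\weakto\bar\sigma,\bar e,\bar p$ in $H^1(0,T;L^2(\cT;\symM))$, noting that $\fint_\cT\nabla^s u^\eps(t)\equiv\xi(t)$ by the affine Dirichlet data. Then let $(p,z,\v)$ be the unique cell-problem solution for $\xi$ from Theorem~\ref{thm:E-Pi-Sig-well-def} and form the realizations $p_\eps(x,t):=p(t,\tau_{x/\eps}\omega)$, $z_\eps(x,t):=z(t,\tau_{x/\eps}\omega)$, $v_\eps(x,t):=\v(t,\tau_{x/\eps}\omega)$. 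The time-dependent ergodic theorem (Lemma~\ref{lem:ergodic-time}) gives, for a.e.\,$\omega$, $p_\eps\weakto\Pi(\xi)$, $z_\eps\weakto\Sigma(\xi)$, $v_\eps\weakto 0$ in $L^2(0,T;L^2(\cT))$; the definitions \eqref{def:L2pot}--\eqref{def:L2sol} give $\diver_x z_\eps=0$ and $v_\eps=\nabla_x\phi_\eps$ with $\phi_\eps\to 0$ strongly in $L^2(\cT)$ (Lemma~\ref{lem:existence-primitive-v-omega}); and the cell identities $C_\eps z_\eps=\xi+v_\eps^s-p_\eps$ and $\partial_t p_\eps\in\partial\Psi_\eps(z_\eps-B_\eps p_\eps)$ hold pointwise in $(x,t)$. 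Introducing a cutoff $\psi_\eta\in C_c^\infty(\cT)$ with $\psi_\eta\equiv 1$ on $\cT_\eta:=\{x\in\cT:\dist(x,\partial\cT)>\eta\}$, define the oracle $\hat u^\eps:=\xi(t)x+a(t)+\psi_\eta\phi_\eps$, $\hat\sigma^\eps:=z_\eps$, $\hat e^\eps:=C_\eps z_\eps$, $\hat p^\eps:=p_\eps$. The oracle satisfies the plasticity system exactly in $\cT_\eta$, matches the Dirichlet datum $\xi(t)x+a(t)$ on $\partial\cT$, and in the boundary strip $\cT\setminus\cT_\eta$ the strain-decomposition identity fails only by a commutator controlled by $\eta^{-1}\|\phi_\eps\|_{L^2}\to 0$.

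To conclude, I would compare $(u^\eps,\sigma^\eps,e^\eps,p^\eps)$ with the oracle via the monotonicity/Fenchel calculation used in the uniqueness part of Section~\ref{sub:Proof-of-Thm-Main2}: test the linear-momentum balance with $u^\eps-\hat u^\eps\in H^1_0(\cT)$, use monotonicity of $\partial\Psi_\eps$, and exploit the orthogonality of potentials and solenoidals through the div-curl lemma (Lemma~\ref{lem:Div-Curl-Lemma}). Sending $\eps\to 0$ and then $\eta\to 0$ yields $\|\sigma^\eps-z_\eps\|_{L^2(K)}+\|p^\eps-p_\eps\|_{L^2(K)}\to 0$ on every compact $K\Subset\cT$; averaging then produces $\fint_\cT\sigma^\eps(t)\to\Sigma(\xi)(t)$ and $\fint_\cT p^\eps(t)\to\Pi(\xi)(t)$, and uniqueness of the cell problem promotes subsequential to full convergence. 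The main obstacle is the nonlinear, $\eps$-oscillating inclusion $\partial_t p^\eps\in\partial\Psi_\eps(\sigma^\eps-B_\eps p^\eps)$: I would rewrite it as the Fenchel equality $\Psi_\eps(\sigma^\eps-B_\eps p^\eps)+\Psi_\eps^\ast(\partial_t p^\eps)=\partial_t p^\eps:(\sigma^\eps-B_\eps p^\eps)$, take $\liminf$ on the left using the ergodic-theorem/two-scale form of Lemmas~\ref{lem:weak-lsc-psi} and \ref{lem:Ben-Veneroni-generalized}, and pass to the limit on the right with the div-curl lemma---the cross term involving $v_\eps$ vanishes because $v_\eps\weakto 0$ is a realization of a potential field while $\sigma^\eps$ is divergence-free---forcing the weak limits of $(p^\eps,\sigma^\eps)$ to solve the cell-problem flow rule, which by Theorem~\ref{thm:E-Pi-Sig-well-def} has a unique solution.
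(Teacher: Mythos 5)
Your core strategy is the paper's: compare the $\eps$-solution on $\cT$ with the realizations $\tilde p^\eps(t,x)=p(t,\tau_{x/\eps}\omega)$, $\tilde z^\eps$, $\tilde v^\eps$ of the unique cell-problem triple $(p,z,\v)$ from Theorem~\ref{thm:E-Pi-Sig-well-def}, derive strong $L^2(0,T;L^2(\cT))$-convergence of $\sigma^\eps-\tilde z^\eps$ and $p^\eps-\tilde p^\eps$ to zero by a monotonicity/energy argument, and finish with the ergodic theorem (Lemma~\ref{lem:ergodic-time}) to identify $\fint_\cT\sigma^\eps\to\Sigma(\xi)$, $\fint_\cT p^\eps\to\Pi(\xi)$. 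Where you differ from the paper is in the choice of test function. The paper never constructs an oracle displacement $\hat u^\eps$: since $u^\eps-\xi\cdot x\in H^1_0(\cT)$ automatically (the boundary datum is affine), one can test $\diver(\tilde z^\eps-\sigma^\eps)=0$ directly with $\partial_t(u^\eps-\xi\cdot x)$; inserting the two constitutive laws yields an exact Gronwall-type identity \eqref{eq:EPISIG-conv-3} whose only non-sign-definite term is $\int_\cT(\tilde z^\eps-\sigma^\eps):\partial_t\tilde v^\eps$, which is split and killed by the ergodic theorem (for the $\tilde z^\eps$ piece, via orthogonality of $L^2_{sol}$ and $\cV^2_{pot}$) and the div-curl Lemma~\ref{lem:Div-Curl-Lemma} (for the $\sigma^\eps$ piece). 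Your cutoff-oracle $\hat u^\eps=\xi\cdot x+a+\psi_\eta\phi_\eps$ can be made to work, but it introduces a commutator error $(1-\psi_\eta)\nabla^s\phi_\eps - (\phi_\eps\otimes\nabla\psi_\eta)^s$ in the boundary strip and forces the double limit $\eps\to 0$, $\eta\to 0$; in effect you are re-deriving the interior mechanism of the div-curl lemma (which itself proceeds via a cutoff and the strong convergence $\phi_{\eps,\omega,\v}\to 0$) by hand, and you lose the clean global estimate on $\cT$ in favor of a statement on compacts $K\Subset\cT$ that then needs a further argument to recover the averages over the whole simplex. So: same essential route, but with an avoidable detour.

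One genuine confusion: your final paragraph switches to a weak-limit/Minty-type identification, asserting that passing to weak limits in the Fenchel equality ``forces the weak limits of $(p^\eps,\sigma^\eps)$ to solve the cell-problem flow rule, which by Theorem~\ref{thm:E-Pi-Sig-well-def} has a unique solution.'' This step does not close as written: the weak limits of $(p^\eps,\sigma^\eps)$ are deterministic functions of $(t,x)$ on $\cT$, whereas the cell problem \eqref{eq:EPISIG-1}--\eqref{eq:EPISIG-2} is posed for $\Omega$-valued functions, so ``solve the cell-problem flow rule'' is not meaningful for those limits without introducing a two-scale (stochastic) lift that your proposal does not supply. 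The strong-convergence comparison against $(\tilde p^\eps,\tilde z^\eps,\tilde v^\eps)$ in your earlier paragraphs is exactly how the paper sidesteps this issue, so you should rely on that argument and drop the weak-limit identification.
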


\begin{proof}
  We will prove a slightly stronger result: Given $\xi\in
  H_\ast^{1}(0,T;\symM)$, let $\left(p,\, z,\,\v\right)$ be the unique
  solution of \eqref{eq:EPISIG-1}--\eqref{eq:EPISIG-2} (which exists
  by Theorem \ref{thm:E-Pi-Sig-well-def}). Let $\omega\in\Omega$ be
  such that $p_{\omega}(t,x):=p(t,\tau_{x}\omega)$,
  $z_{\omega}(t,x):=z(t,\tau_{x}\omega)$ and
  $\v_{\omega}(t,x):=\v(t,\tau_{x}\omega)$ satisfy \[ p_{\omega}\in
  H^{1}(0,T;L_{loc}^{2}(\Rd;\symM))\,,\,\, z_{\omega}\in
  H^{1}(0,T;L_{sol,loc}^{2}(\Rd))\,,\,\,\v_{\omega}\in
  H^{1}(0,T;L_{pot,loc}^{2}(\Rd))\,.\] This regularity is valid for
  a.e. $\omega$ as can be seen applying Lemma \ref{lem:ergodic-time}
  to time derivatives. Furthermore, we choose $\omega$ as in
  Assumption \ref{ass:sto-coeffs}. For any $\eps>0$ let
  $\tilde{p}^{\eps}(t,x):=p\left(t,\tau_{\frac{x}{\eps}}\omega\right)$,
  $\tilde{z}^{\eps}(t,x):=z\left(t,\tau_{\frac{x}{\eps}}\omega\right)$,
  $\tilde{\v}^{\eps}(t,x):=\v\left(t,\tau_{\frac{x}{\eps}}\omega\right)$
  be realizations.  Let $\cT\subset\Rd$ be a simplex and let
  $u^{\eps}$, $p^{\eps}$, $\sigma^{\eps}$ be the unique solution
  to \begin{align}
    -\diver\sigma^{\eps} & =0\,,\nonumber \\
    \nabla^{s}u^{\eps} & =C_{\eps}\sigma^{\eps}+p^{\eps}\label{eq:EPISIG-conv-1}\\
    \partial_{t}p^{\eps} &
    \in\partial\Psi_{\eps}(\sigma^{\eps}-B_{\eps}p^{\eps}\,;\,.\,)\nonumber \end{align}
  on $\cT$ with boundary condition
  \begin{equation}
    \label {eq:proof-4-3-BC}
    u^{\eps}(x)=\xi\cdot x\qquad\mbox{on }\partial\cT
  \end{equation}
  and initial condition $p^{\eps}(0,\cdot)=0$ (we recall
  $\partial\Psi_{\eps}(\sigma\,;\, x) :=\partial \Psi
  (\sigma\,;\,\tau_{\frac{x}{\eps}}\omega)$). We will prove that the
  realizations of the stochastic cell solutions and the plasticity
  solutions on $\cT$ coincide in the limit $\eps\to 0$; more
  precisely, we claim that
  \begin{equation}
    \lim_{\eps\rightarrow0}\left(\norm{\sigma^{\eps}-
        \tilde{z}^{\eps}}_{L^{2}(0,T;L^{2}(\cT))}+
      \norm{p^{\eps}-\tilde{p}^{\eps}}_{L^{2}(0,T;L^{2}(\cT)}\right)=0\,.
    \label{eq:EPISIG-conv-stronger}
  \end{equation}
  
  \smallskip Let us first show that \eqref {eq:EPISIG-conv-stronger}
  indeed implies Theorem \ref {thm:Sto-averaging}: The ergodic theorem
  in the version of Lemma \ref{lem:ergodic-time} and the definition of
  $\Sigma$ and $\Pi$ in \eqref{eq:def-sig-stoch} imply that
  $\fint_{\cT}\tilde{z}^{\eps}(.)\to \int_{\Omega}z(.) =
  \Sigma(\xi)(.)$ and $\fint_{\cT}\tilde{p}^{\eps}(.)\to
  \int_{\Omega}p(.) = \Pi(\xi)(.)$ holds in the space
  $L^2(0,T;\symM)$. Equation \eqref{eq:EPISIG-conv-stronger} therefore
  yields $\fint_{\cT}\sigma^{\eps}\to \Sigma(\xi)$ and
  $\fint_{\cT}p^{\eps}\rightarrow\Pi(\xi)$ in $L^2(0,T;\symM)$.  This
  provides the averaging property \eqref {eq:averaging-conv-p-sig} of
  Definition \ref {def:Averaging} (at first, for a subsequence
  $\eps\to 0$ for almost every $t\in (0,T)$, then, since the limit is
  determined, along the original sequence $\eps\to 0$).

  \smallskip Let us now prove \eqref{eq:EPISIG-conv-stronger}. We will
  use a testing procedure and energy-type estimates. Due to
  \eqref{eq:EPISIG-1}--\eqref{eq:EPISIG-2}, $\tilde{z}^{\eps}$,
  $\tilde{p}^{\eps}$ and $\tilde{\v}^{\eps}$ satisfy the following
  system of equations on $\cT\times (0,T)$
  \begin{align}
    -\diver\tilde{z}^{\eps} & =0\,,\nonumber \\
    \xi & =C_{\eps}\tilde{z}^{\eps}+\tilde{p}^{\eps}-\left(\tilde{\v}^{\eps}\right)^s\,,\label{eq:EPISIG-conv-2}\\
    \partial_{t}\tilde{p}^{\eps} &
    \in\partial\Psi_{\eps}(\tilde{z}^{\eps}-B_{\eps}\tilde{p}^{\eps}\,;\,.\,)\,.\nonumber 
  \end{align}
  In what follows we use the notation $\left| \zeta\right|
  _{B_{\eps}}^{2}:=\zeta:B_{\eps}\zeta$ and $\left| \zeta\right|
  _{C_{\eps}}^{2}:=\zeta:C_{\eps}\zeta$. We take the difference of
  \eqref{eq:EPISIG-conv-1}$_{1}$ and \eqref{eq:EPISIG-conv-2}$_{1}$,
  multiply the result by
  $\left(\partial_{t}u^{\eps}-\partial_{t}\left(\xi\cdot
      x\right)\right)$ and integrate over $\cT$. We integrate by parts
  and exploit that boundary integrals vanish due to \eqref
  {eq:proof-4-3-BC},
  \begin{align}
    0 & =-\int_{\cT}\left(\tilde{z}^{\eps}-\sigma^{\eps}\right) :
    \left(\partial_{t}\nabla^{s}u^{\eps}-\partial_{t}\xi\right)\nonumber \\
    & =\int_{\cT}\left(\tilde{z}^{\eps}-\sigma^{\eps}\right):
    \partial_{t}\left(C_{\eps}\tilde{z}^{\eps}+\tilde{p}^{\eps}-\left(\tilde{\v}^{\eps}\right)^s - C_{\eps}\sigma^{\eps}-p^{\eps}\right)\nonumber \\
    & =\frac{1}{2}\frac{d}{dt}\int_{\cT}\left[\left(\tilde{z}^{\eps}-\sigma^{\eps}\right):\left(C_{\eps}\left(\tilde{z}^{\eps}-\sigma^{\eps}\right)\right)+\left(\tilde{p}^{\eps}-p^{\eps}\right):\left(B_{\eps}\left(\tilde{p}^{\eps}-p^{\eps}\right)\right)\right]+\int_{\cT}(\tilde{z}^{\eps}-\sigma^{\eps}):\partial_{t}\tilde{\v}^{\eps}\displaybreak[2]\nonumber \\
    & \phantom{=}+\int_{\cT}\left(\partial_{t}\tilde{p}^{\eps}-\partial_{t}p^{\eps}\right):\left(\left(\tilde{z}^{\eps}-B_{\eps}\tilde{p}^{\eps}\right)-\left(\sigma^{\eps}-B_{\eps}p^{\eps}\right)\right)\,.\displaybreak[2]\nonumber \\
    & \in\frac{1}{2}\frac{d}{dt}\int_{\cT}\left[\left| \tilde{z}^{\eps}-\sigma^{\eps}\right| _{C_{\eps}}^{2}+\left| \tilde{p}^{\eps}-p^{\eps}\right| _{B_{\eps}}^{2}\right]+\int_{\cT}(\tilde{z}^{\eps}-\sigma^{\eps}):\partial_{t}\tilde{\v}^{\eps}\nonumber \\
    &
    \phantom{=}+\int_{\cT}\left(\partial\Psi_{\eps}\left(\tilde{z}^{\eps}-B_{\eps}\tilde{p}^{\eps}\right)-\partial\Psi_{\eps}\left(\sigma^{\eps}-B_{\eps}p^{\eps}\right)\right):\left(\left(\tilde{z}^{\eps}-B_{\eps}\tilde{p}^{\eps}\right)-\left(\sigma^{\eps}-B_{\eps}p^{\eps}\right)\right)\,.\label{eq:EPISIG-conv-3}
  \end{align}
  In the second line, we used \eqref{eq:EPISIG-conv-1}$_{2}$ and
  \eqref{eq:EPISIG-conv-2}$_{2}$.  In the third line we used the
  symmetry of $\sigma^{\eps}$ and $\tilde{z}^{\eps}$ to replace
  $(\tilde{\v}^{\eps})^{s}$ by $\tilde{\v}^{\eps}$.

  Concerning the second integral on the right hand side of
  \eqref{eq:EPISIG-conv-3}, note that $\int_0^t
  \int_{\cT}\tilde{z}^{\eps}:\partial_{t}\tilde{\v}^{\eps}\to \int_0^t
  \int_{\cT}\int_{\Omega}z:\partial_{t}\v = 0$ by Lemma
  \ref{lem:ergodic-time} and orthogonality of $L_{sol}^{2}(\Omega)$
  and $\cV_{pot}^{2}(\Omega)$. Furthermore,
  $\int_0^t\int_{\cT}\sigma^{\eps}:\partial_{t}\tilde{\v}^{\eps}\rightarrow0$
  by Lemma \ref{lem:Div-Curl-Lemma}. By monotonicity of
  $\partial\Psi_{\eps}$, the last integral on the right hand side of
  \eqref{eq:EPISIG-conv-3} is positive. An integration over $(0,t)$
  therefore provides
  \begin{equation}
    \limsup_{\eps\rightarrow0}\int_{\cT}\left[\left| \tilde{z}^{\eps}-\sigma^{\eps}\right|
      _{C_{\eps}}^{2}+\left| \tilde{p}^{\eps}-p^{\eps}\right|
      _{B_{\eps}}^{2}\right](t)\leq\limsup_{\eps\rightarrow0} \int_0^t\int_{\cT}
    (\tilde{z}^{\eps}-\sigma^{\eps}) \del_t \tilde{\v}^{\eps} = 0
    \,,\label{eq:EPISIG-conv-4}
  \end{equation} 
  where we used that initial data vanish, $\tilde{z}^{\eps}|_{t=0} =
  0$ by \eqref {eq:EPISIG-conv-2} and $\sigma^{\eps}|_{t=0} = 0$ by
  \eqref {eq:EPISIG-conv-1} for vanishing $p^\eps$ and $\xi$ in $t=0$.
  We have thus shown \eqref {eq:EPISIG-conv-stronger} and hence
  Theorem \ref {thm:Sto-averaging}.
\end{proof}

\subsection{Admissibility of $\Sigma$}

\begin{thm}[Admissibility]
  \label{thm:existence-limit-stochastic}
  Let the coefficients $B(\omega)$, $C(\omega)$,
  $\Psi(\,\cdot\,;\,\omega)$ and data $\U$, $f$ be as in Assumption
  \ref{ass:sto-coeffs}. Then the causal operator $\Sigma$ of
  Definition \ref {def:Sigma} satisfies the sufficient condition for
  admissibility of Definition \ref {def:admissible-operator}.
\end{thm}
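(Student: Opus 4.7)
\emph{Plan.} The goal is to show that any weak limit $v \in H^1(0,T;H^1_0(\Q))$ of discrete solutions $v_h$ satisfies the continuous equation with operator $\Sigma$. I will combine a Minty-type monotonicity argument with an energy identity extracted from the discrete equation. Write $\xi_h := \nabla^s(v_h + \U_h)$ and $\xi := \nabla^s(v + \U)$; since $\U|_{t=0}=0$ and $v_h|_{t=0}=0$, both have vanishing initial traces, and $\xi_h \weakto \xi$ weakly in $H^1(0,T;L^2(\Q;\symM))$.

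\emph{Bounds, weak compactness and limit equation.} Applying the pointwise-in-$x$ a priori bound \eqref{eq:EPISIG-apriori-global} to $\xi_h(x,\cdot)\in H^1_*(0,T;\symM)$ and integrating over $\Q$, the family $\{\Sigma(\xi_h)\}$ is bounded in $H^1(0,T;L^2(\Q;\symM))$, so up to a subsequence $\Sigma(\xi_h)\weakto\Xi$ in this space. Given $\varphi\in L^2(0,T;H^1_0(\Q))$, I choose a finite-element sequence $\varphi_h\in L^2(0,T;Y_h)$ with $\varphi_h\to\varphi$ strongly in $L^2(0,T;H^1(\Q))$; passing to the limit in the discrete equation yields
\[
\int_0^T\!\!\int_\Q \Xi:\nabla\varphi \;=\; \int_0^T\!\!\int_\Q f\cdot\varphi\qquad\forall\varphi\in L^2(0,T;H^1_0(\Q)).
\]

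\emph{Main obstacle: identifying $\Xi=\Sigma(\xi)$.} This is the crux. I will use Minty's trick. Inspecting the uniqueness computation at the end of the proof of Theorem \ref{thm:E-Pi-Sig-well-def}, one integration by parts in time (exploiting the vanishing initial data and the definition $\int_\Omega \tilde z = \Sigma(\xi_1)-\Sigma(\xi_2)$) yields the pointwise-in-$x$ monotonicity estimate
\[
\int_0^T\bigl(\Sigma(\xi_1)-\Sigma(\xi_2)\bigr):\partial_t(\xi_1-\xi_2)\,dt\;\geq\;0\qquad\forall\,\xi_1,\xi_2\in H^1_*(0,T;\symM).
\]
Applied to $\xi_h$ and any $\mu\in H^1_*(0,T;L^2(\Q;\symM))$ and integrated over $\Q$, this gives
\[
\int_0^T\!\!\int_\Q \bigl(\Sigma(\xi_h)-\Sigma(\mu)\bigr):\partial_t(\xi_h-\mu)\;\geq\;0.
\]

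\emph{Energy identity and conclusion.} The remaining step, and where the discrete equation is essential, is to show $\lim_h \int_0^T\!\int_\Q \Sigma(\xi_h):\partial_t\xi_h = \int_0^T\!\int_\Q \Xi:\partial_t\xi$. I obtain this by testing the discrete equation with $\partial_t v_h\in L^2(0,T;Y_h)$: symmetry of $\Sigma(\xi_h)$ gives $\int \Sigma(\xi_h):\partial_t\nabla^s v_h = \int f\,\partial_t v_h$, and adding $\int\Sigma(\xi_h):\partial_t\nabla^s\U_h$ (which converges by weak--strong duality to $\int\Xi:\partial_t\nabla^s\U$) yields $\lim_h\int\Sigma(\xi_h):\partial_t\xi_h = \int f\,\partial_t v + \int\Xi:\partial_t\nabla^s\U$. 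The first term, via the limit equation tested with $\partial_t v\in L^2(0,T;H^1_0(\Q))$ and symmetry of $\Xi$, equals $\int\Xi:\partial_t\nabla^s v$, so the sum is $\int\Xi:\partial_t\xi$. With this identity, passing to the limit in the monotonicity inequality and taking $\mu=\xi\pm s\eta$ with $s\to 0^+$ (using the weak continuity of $\Sigma$ from Theorem \ref{thm:E-Pi-Sig-well-def} to handle $\Sigma(\xi\pm s\eta)\to\Sigma(\xi)$) produces $\int_0^T\!\int_\Q(\Xi-\Sigma(\xi)):\partial_t\eta = 0$ for every $\eta\in H^1_*(0,T;L^2(\Q;\symM))$. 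Since $\partial_t\eta$ is arbitrary in $L^2(0,T;L^2(\Q;\symM))$, we conclude $\Xi=\Sigma(\xi)$, which closes the admissibility argument.
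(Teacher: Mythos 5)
Your proof is correct, but it takes a genuinely different route from the paper's. The paper opens the operator $\Sigma$: for each $x$ it extracts weak limits $(p,z,\v)$ of the cell solutions $(p_h,z_h,\v_h)$ attached to $\xi_h(x,\cdot)$, passes to the limit in the energy identity using the weak lower semicontinuity of $\Upsilon$, $\Upsilon^{*}$, and then recovers the flow-rule inclusion $\partial_t p\in\partial\Psi(z-Bp)$ from the combination of Fenchel's inequality and equality (Lemma~\ref{lemma:convprop}\,(iii),(v)); the conclusion $\int_\Omega z=\Sigma(\nabla^s u)$ then follows by definition. You instead keep $\Sigma$ as a black box and observe that the uniqueness computation in the proof of Theorem~\ref{thm:E-Pi-Sig-well-def}, together with the vanishing initial data, actually proves that $\Sigma$ is a \emph{monotone} causal operator in the natural duality $\int_0^T\bigl(\Sigma(\xi_1)-\Sigma(\xi_2)\bigr):\partial_t(\xi_1-\xi_2)\,dt\ge 0$; this packages all the convexity and orthogonality structure of the cell problem into one abstract property, and a standard Minty argument then identifies $\Xi=\Sigma(\xi)$. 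Both arguments hinge on the same energy identity obtained by testing the discrete equation with $\partial_t v_h$ and on the same weak--strong convergences. Your version is more modular and arguably more transparent, since after establishing monotonicity one needs no further reference to $\Psi$, $\Upsilon$, $p_h$, $z_h$, or $\v_h$; the paper's version is more concrete and, as a byproduct, also identifies the limiting plastic strain via $\int_\Omega p=\Pi(\nabla^s u)$, which your argument does not directly provide (it is not needed for this theorem). One small remark: the ``integration by parts in time'' you mention is not really needed to obtain the monotonicity inequality --- one simply integrates the differential identity from the uniqueness proof over $[0,T]$ and drops the nonnegative terms; and the limit $s\to 0^+$ step requires, as you note, both the weak continuity of $\Sigma$ and the a priori bound of Theorem~\ref{thm:E-Pi-Sig-well-def} to justify the dominated-convergence passage in the $x$-integral.
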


\begin{proof}
  We have to study solutions $u_h$ of the discretized effective
  problem with the discretized boundary data $\U_{h}\rightarrow\U$
  strongly in $H^{1}(0,T;H^{1}(\Q))$ as $h\rightarrow0$. With $\Sigma$
  given through \eqref{eq:def-sig-stoch}, let $u_{h}\in
  H^{1}(0,T;H^{1}(\Q))$ be a sequence with $u_{h}\in \U_{h} +
  H^{1}(0,T;Y_{h})$, satisfying the discrete system
  \begin{equation}
    \int_0^T\int_{\Q}\Sigma(\nabla^{s}u_{h}):\nabla\varphi
    = \int_0^T\int_{\Q}f\cdot\varphi\qquad\forall\varphi\in
    L^{2}(0,T;Y_{h})\,.\label{eq:sto-discrete-eff}
  \end{equation}
  We furthermore have the weak convergence $u_{h}\weakto u\in
  H^{1}(0,T;H^{1}(\Q;\Rd)$ as $h\rightarrow0$ for some $u\in \U +
  H^{1}(0,T;H_{0}^{1}(\Q;\Rd))$.  Our aim is to show that $u$ solves
  the effective problem
  \begin{equation}
    \int_0^T\int_{\Q}\Sigma(\nabla^{s}u):\nabla\varphi
      =\int_0^T\int_{\Q}f\cdot\varphi\qquad\forall\varphi\in
      L^{2}(0,T;H_{0}^{1}(\Q))\,.\label{eq:sto-discrete-hom-eff}
  \end{equation}

  \smallskip {\em Step 1.}  For every $x\in\Q$, we denote by
  $p_{h}(t,x,\cdot)$, $z_{h}(t,x,\cdot)$, $\v_{h}(t,x,\cdot)$ the
  solutions of \eqref{eq:EPISIG-1}--\eqref{eq:EPISIG-2} corresponding
  to $\xi(t)=\nabla^{s}u_{h}(t,x)$. By definition of $\Sigma$, there
  holds $\Sigma(\nabla^{s}u_{h}) = \int_{\Omega} z_{h}(\omega)\,
  d\cP(\omega)$. The a priori estimate of Theorem
  \ref{thm:E-Pi-Sig-well-def} provides \[
  \norm{p_{h}}_{\cV^{1}_{0,0}}+\norm{z_{h}}_{\cV^{1}_{0,0}}
  +\norm{\v_{h}}_{\cV^{1}_{0,0}}\leq C\norm {\nabla^s
    u}_{H^{1}(0,T;L^{2}(\Q))}\,,\] where
  $\cV^{1}_{0,0}:=H^{1}(0,T;L^{2}(\Q;L^{2}(\Omega;\Rdd)))$.  By this
  estimate, we obtain the weak convergence in $(\cV^{1}_{0,0})^3$ of a
  subsequence, again denoted $(p_{h}, z_{h}, \v_{h})$, weakly
  converging to some limit $(p, z, \v)$. The limit satisfies again the
  linear law \eqref{eq:EPISIG-2},
  \begin{equation} 
    C z = \nabla^{s}u + \v^s - p\,.\label{eq:approx-prop-eq-1}
  \end{equation}
  Equation \eqref{eq:sto-discrete-eff} can be rewritten as\[
  \int_{0}^{T}\int_{\Q}\int_{\Omega}z_{h}(t,x,\omega)\, d\cP(\omega)
  :\,\nabla\varphi(x)\,
  dx=\int_{0}^{T}\int_{\Q}f\cdot\varphi\qquad\forall\varphi\in
  L^{2}(0,T;Y_{h})\,,\] and the limit $h\to 0$
  provides
  \begin{equation}
    \int_{0}^{T}\int_{\Q}\int_{\Omega}z:\nabla\varphi
    =\int_{0}^{T}\int_{\Q}f\cdot\varphi\qquad\forall\varphi\in
    L^{2}(0,T;H_{0}^{1}(\Q))\,.\label{eq:approx-prop-eq-2}
  \end{equation}

  \smallskip {\em Step 2.} It remains to verify $\int_{\Omega}z =
  \Sigma(\nabla^s u)$.  We use
  $\varphi=\partial_{t}\left(u_{h}-\U_{h}\right)$ as a test function
  in \eqref{eq:sto-discrete-eff} and exploit the orthogonality $0 =
  \int_{\Q}\int_{\Omega}z_{h}:\partial_{t}\v_{h}$.  We follow the
  lines of the calculation in \eqref{eq:EPISIG-est-1-a} to obtain
  \begin{align}
    &\int_{\Q}f\cdot\partial_{t}\left(u_{h}-\U_{h}\right)
    +\int_{\Q}\int_{\Omega}z_{h}:\nabla\partial_{t}\U_{h}    \nonumber\\
    &\qquad =\int_{\Q}\int_{\Omega}z_{h}:\partial_{t}\nabla^{s}u_{h}
    =\int_{\Q}\int_{\Omega}\left[z_{h}:C\partial_{t}z_{h}+z_{h}:\partial_{t}p_{h}-z_{h}:\partial_{t}\v_{h}\right]
    \nonumber\\
    &\qquad
    =\frac{1}{2}\frac{d}{dt}\left(\int_{\Q}\int_{\Omega}p_{h}:Bp_{h}+\int_{\Q}\int_{\Omega}z_{h}:Cz_{h}\right)
    +\int_{\Q}\int_{\Omega}\left(\Psi^{\ast}\left(\partial_{t}p_{h}\right)+\Psi\left(z_{h}-Bp_{h}\right)\right)\,.
    \label{eq:approx-prop-ineq-1}
  \end{align}
  Taking weak limits in \eqref{eq:approx-prop-ineq-1}
  yields
  \begin{multline*}
    \int_{0}^{T}\int_{\Q}\int_{\Omega}\left(\Psi^{\ast}\left(\partial_{t}p\right)+\Psi\left(z-Bp\right)\right)\\
    \leq\int_{0}^{T}\int_{\Q}f\cdot\partial_{t}\left(u-\U\right)
    +\int_{0}^{T}\int_{\Q}\int_{\Omega}z:\nabla\partial_{t}\U
    -\frac{1}{2}\left.\left(\int_{\Q}\int_{\Omega}p:(Bp)+\int_{\Q}\int_{\Omega}z:(Cz)\right)\right|_{0}^{T}\,.
  \end{multline*}

  Relations \eqref{eq:approx-prop-eq-1} and
  \eqref{eq:approx-prop-eq-2} allow to perform the calculations of
  \eqref {eq:approx-prop-ineq-1} also for the limit functions. We
  obtain from the last inequality\[
  \int_{0}^{T}\int_{\Q}\int_{\Omega}\left(\Psi^{\ast}\left(\partial_{t}p\right)
  +\Psi\left(z-Bp\right)\right)\leq\int_{0}^{T}\int_{\Q}\int_{\Omega}\partial_{t}p:(z-Bp)\,.\]
  The Fenchel inequality of Lemma \ref{lemma:convprop} (iii) yields
  $\partial_{t}p:(z-Bp)\leq
  \Psi^{\ast}\left(\partial_{t}p\right)+\Psi\left(z-Bp\right)$
  pointwise. We can therefore conclude from the Fenchel equality
  \begin{equation}
    \partial_{t}p\in\partial\Psi(\sigma-Bp)\,.\label{eq:lim-inclusion}
  \end{equation}

  Relations \eqref{eq:approx-prop-eq-1} and \eqref{eq:lim-inclusion}
  imply that $z$ is defined as in the definition of $\Sigma$, hence
  $\int_{\Omega}z(t,x,\,.\,) = \Sigma(\nabla^{s}u)(t,x,\,.\,)$ for
  every $t\in[0,T]$ and a.e. $x\in\Q$. Therefore,
  \eqref{eq:approx-prop-eq-2} is equivalent with
  \eqref{eq:sto-discrete-hom-eff} and the theorem is shown.
\end{proof}

\subsection{Conclusion of the proof}
\label{ssec:Proof-of-Main-Theorem} 

We can now conclude the proof of our main result, Theorem \ref
{thm:Main-Theorem}.  Theorem \ref{thm:existence-limit-stochastic}
implies that $\Sigma$ of \eqref{eq:def-sig-stoch} is
admissible. Theorem \ref{thm:Sto-averaging} yields that, for almost
every $\omega\in\Omega$, the coefficients $C_{\eps,\omega}(x),\,
B_{\eps,\omega}(x),\,\Psi_{\eps,\omega}(\sigma;x)$ allow averaging
with limit operator $\Sigma$.  We can therefore apply Theorem
\ref{thm:needle-thm} and obtain
\begin{align*}
  & u^{\eps}\weakto u\quad\mbox{weakly in }H^{1}(0,T;H^{1}(\Q;\Rd))\\
  & p^{\eps}\weakto\Pi(\nabla^{s}u),\quad\sigma^{\eps}\weakto\Sigma(\nabla^{s}u)
  \quad\mbox{weakly in }H^{1}(0,T;L^{2}(\Q;\Rdd))\,,
\end{align*} 
where $u$ is the unique weak solution to the homogenized problem\[
-\diver\Sigma(\nabla^{s}u)=f\,\] with boundary condition $\U$ as in
Definition \ref{def:limit-problem}.  Theorem \ref {thm:Main-Theorem}
is shown.

\appendix

\section{An example for the stochastic setting}

Our aim here is to describe briefly a non-trivial example for a
stochastic setting: the checker board construction of i.i.d. random
variables.  Our main goal is to show that the compactness assumption
on $\Omega$ is not too restrictive and still permits the analysis of
interesting problems.

We use $Y:=[0,1[^{d}$ with the topology of the torus and the partition
of $\Rr^{d}$ with unit cubes $\cC_{z}:=z+Y$ for $z\in\Zz^{d}$. We
consider the sets
\begin{align*}
  \tilde{\Omega} & :=\left\{ u\in L^{\infty}(\Rr^{d})\,|\, u|_{\cC_{z}}\equiv c_{z}\,,
    \mbox{ for some }\ c: \Zz^{d}\to [0,1], z\mapsto c_z\right\} \\
  \Omega & :=\left\{ u\in L^{\infty}(\Rr^{d})\,|\,\exists\xi\in
    Y\:\mbox{s.t. }u(.-\xi)\in\tilde{\Omega}\right\} \,.
\end{align*}
For $u\in\Omega$ we denote a shift $\xi$ from the above definition as
$\xi(u)$. Since $L^{1}(\Rr^{d})$ is separable, we infer from
\cite{Brezis1983a}, Theorem III.28, that $L^{\infty}(\Rr^{d})$ with
the weak-$\ast$-topology is metrizable: With a countable and dense
subset $\left(\phi_{i}\right)_{i\in\Nn}$ of $L^{1}(\Rr^{d})$, a metric
$d$ on $B_{\infty}$ is given by \[
d(u,v):=\sum_{i=1}^{\infty}\frac{1}{2^{i}}\left|\left\langle
    u-v,\phi_{i}\right\rangle \right|\,.\] We infer that
$B_{\infty}:= \overline{B_{1}(0)}\subset L^{\infty}(\Rr^{d})$ with the
weak-$\ast$-topology is a compact metric space. The sets
$\tilde{\Omega}$ and $\Omega$ are closed subsets of
$\left(B_{\infty},d\right)$ and thus compact metric spaces.

The probability measure on $\Omega$ corresponding to i.i.d.\,random
variables can be defined with the help of elementary subsets. For an
open set $U\subseteq Y$, a number $k\in\Nn$, and relatively open
intervals $I_{z}:=\left((a_{z},b_{z})\cap[0,1]\right)\subset[0,1]$,
$z\in\Zz^{d}$ and $a_{z}<b_{z}$, the sets \begin{equation}
  A\left(U,(I_{z})_{z\in\Zz^{d}},k\right)=\left\{
    u\in\Omega\,|\,\xi(u)\in U\,,\,\, u(.-\xi(u))|_{\cC_{z}}\in
    I_{z}\,\forall z\,,\,\,|z|\leq
    k\right\} \label{eq:weak-star-basis}\end{equation} are open and
form a basis of the topology in $\Omega$. For any such set $A(\,.\,)$
we define\[
\cP\left(A\left(U,(I_{z})_{z\in\Zz^{d}},k\right)\right):=|U|\prod_{|z|\leq
  k}|b_{z}-a_{z}|\,.\]

We finally introduce $\tau_{x}:\Omega\rightarrow\Omega$ for every
$x\in\Rr^{d}$ through $\tau_{x}u(\,.\,)=u(x+.)$. It is easy to check
that the family $\left(\tau_{x}\right)_{x\in\Rr^{d}}$ is a dynamical
system. Since $\cP(A)=\cP(\tau_{x}A)$ for $A$ as in
\eqref{eq:weak-star-basis} and $x\in\Rr^{d}$, the dynamical system is
measure preserving.

\bibliographystyle{abbrv}
\bibliography{lit-needleplasticity-2}

\begin{thebibliography}{10}

\bibitem{Alber_book98}
H.-D. Alber.
\newblock {\em Materials with memory}, volume 1682 of {\em Lecture Notes in
  Mathematics}.
\newblock Springer-Verlag, Berlin, 1998.
\newblock Initial-boundary value problems for constitutive equations with
  internal variables.

\bibitem{Alber_homogenization00}
H.-D. Alber.
\newblock Evolving microstructure and homogenization.
\newblock {\em Contin. Mech. Thermodyn.}, 12(4):235--286, 2000.

\bibitem{AlberNesenenko09}
H.-D. Alber and S.~Nesenenko.
\newblock Justification of homogenization in viscoplasticity: From convergence
  on two scales to an asymptotic solution in ${L}^2({\Omega})$.
\newblock {\em J. Multiscale Modelling}, 1:223--244, 2009.

\bibitem{Brezis1983a}
H.~Br{\'e}zis.
\newblock {\em Functional {A}nalysis, {S}obolev spaces and partial differential
  equations}.
\newblock Springer, 2011.

\bibitem{Ciarlet}
P.~G. Ciarlet.
\newblock {\em The finite element method for elliptic problems}, volume~40 of
  {\em Classics in Applied Mathematics}.
\newblock Society for Industrial and Applied Mathematics (SIAM), Philadelphia,
  PA, 2002.
\newblock Reprint of the 1978 original.

\bibitem{FrancfortGiacomini-Homogen}
G.~Francfort and A.~Giacomini.
\newblock On periodic homogenization in perfect elasto-plasticity.
\newblock {\em J. Eur. Math. Soc. (JEMS)}, 16(3):409--461, 2014.

\bibitem{HanReddy99}
W.~Han and B.~D. Reddy.
\newblock {\em Plasticity}, volume~9 of {\em Interdisciplinary Applied
  Mathematics}.
\newblock Springer-Verlag, New York, 1999.
\newblock Mathematical theory and numerical analysis.

\bibitem{MR2826468}
H.~Hanke.
\newblock Homgenization in gradient plasticity.
\newblock {\em Math. Models Methods Appl. Sci.}, 21(8):1651--1684, 2011.

\bibitem{heidaschweizer2014}
M.~Heida and B.~Schweizer.
\newblock Non-periodic homogenization of infinitesimal strain plasticity
  equations.
\newblock {\em Z. Angew. Math. Mech.}, (DOI 10.1002/zamm.201400112):1--19,
  2015.

\bibitem{JikovKozlovOleinik94}
V.~Jikov, S.~Kozlov, and O.~Oleinik.
\newblock {\em Homogenization of Differential Operators and Integral
  Functionals}.
\newblock Springer, 1994.

\bibitem{Kozlov1979}
S.~M. Kozlov.
\newblock The averaging of random operators.
\newblock {\em Mat. Sb. (N.S.)}, 109(151)(2):188--202, 327, 1979.

\bibitem{MielkeRS08}
A.~Mielke, T.~Roubicek, and U.~Stefanelli.
\newblock {$\Gamma$}-limits and relaxations for rate-independent evolutionary
  problems.
\newblock {\em Calc. Var. Partial Differential Equations}, 31(3):387--416,
  2008.

\bibitem{MielkeTimofte07}
A.~Mielke and A.~M. Timofte.
\newblock Two-scale homogenization for evolutionary variational inequalities
  via the energetic formulation.
\newblock {\em SIAM J. Math. Anal.}, 39(2):642--668 (electronic), 2007.

\bibitem{Nesenenko07}
S.~Nesenenko.
\newblock Homogenization in viscoplasticity.
\newblock {\em SIAM J. Math. Anal.}, 39(1):236--262, 2007.

\bibitem{papanicolaou1979boundary}
G.~C. Papanicolaou and S.~R.~S. Varadhan.
\newblock Boundary value problems with rapidly oscillating random coefficients.
\newblock In {\em Random fields, {V}ol. {I}, {II} ({E}sztergom, 1979)},
  volume~27 of {\em Colloq. Math. Soc. J\'anos Bolyai}, pages 835--873.
  North-Holland, Amsterdam-New York, 1981.

\bibitem{Rock1998}
R.~Rockafellar and R.-B. Wets.
\newblock {\em Variational Analysis}.
\newblock Springer, 1998.

\bibitem{Schweizer09}
B.~Schweizer.
\newblock Homogenization of the {P}rager model in one-dimensional plasticity.
\newblock {\em Contin. Mech. Thermodyn.}, 20(8):459--477, 2009.

\bibitem{SchweizerVen10}
B.~Schweizer and M.~Veneroni.
\newblock Periodic homogenization of the {P}randtl-{R}euss model with
  hardening.
\newblock {\em J. Multiscale Modelling}, 2:69--106, 2010.

\bibitem{Schweizer2009needle}
B.~Schweizer and M.~Veneroni.
\newblock The needle problem approach to non-periodic homogenization.
\newblock {\em Netw. Heterog. Media}, 6(4):755--781, 2011.

\bibitem{SchweizerVeneroni-twoscale}
B.~Schweizer and M.~Veneroni.
\newblock Homogenization of plasticity equations with two-scale convergence
  methods.
\newblock {\em Applicable Analysis}, 2014.

\bibitem{Visintin2005elastplast}
A.~Visintin.
\newblock On homogenization of elasto-plasticity.
\newblock {\em J. Phys.: Conf. Ser.}, 22:222--234, 2005.

\bibitem{Visintin_Kelvin06}
A.~Visintin.
\newblock Homogenization of the nonlinear {K}elvin-{V}oigt model of
  viscoelasticity and of the {P}rager model of plasticity.
\newblock {\em Contin. Mech. Thermodyn.}, 18(3-4):223--252, 2006.

\bibitem{Visintin_Maxwell09}
A.~Visintin.
\newblock Homogenization of the nonlinear {M}axwell model of viscoelasticity
  and of the {P}randtl-{R}euss model of elastoplasticity.
\newblock {\em Proc. Roy. Soc. Edinburgh Sect. A}, 138(6):1363--1401, 2008.

\bibitem{Zhikov2006}
V.~Zhikov and A.~Pyatniskii.
\newblock Homogenization of random singular structures and random measures.
\newblock {\em Izv. Math.}, 70(1):19--67, 2006.

\end{thebibliography}

\end{document}